\newtheorem{thm}{Theorem}[section]
\newtheorem{lem}[thm]{Lemma}
\newtheorem{prop}[thm]{Proposition}
\newtheorem{cor}[thm]{Corollary}
\theoremstyle{definition}
\newtheorem{dfn}[thm]{Definition}
\newtheorem{cns}[thm]{Construction}
\newtheorem{rmk}[thm]{Remark}
\title{A Quillen model structure for bigroupoids and pseudofunctors}
\author{Martijn den Besten}
\begin{document}
\date{}
\maketitle

\begin{abstract}
A model structure on the category of (small) bigroupoids and pseudofunctors is constructed. In this model structure, every object is cofibrant. In order to keep certain calculations of manageable size, a coherence theorem for bigroupoids and a coherence theorem for pseudofunctors are proven, which may be of independent interest as well.
\end{abstract}

\section{Introduction}

The purpose of this paper is to construct a model structure on the category of (small) bigroupoids and pseudofunctors. In a nutshell, a model structure provides an environment in which one can do abstract homotopy theory. The notion was first introduced by Quillen in \cite{MR0223432}, but has been further refined over the years. Standard references regarding the theory of model structures are \cite{MR1650134} and \cite{MR1944041}. Some well known examples of categories carrying a model structure are the category of topological spaces, the category of simplicial sets and the category of (small) groupoids. The latter is closely related to the main category of this paper. As the name suggests, bigroupoids are a second order analog of groupoids. This analogy persists in the model structure we present below, as it highly similar to the classical model structure on the category of groupoids. The fact that the collection of 1- and 2-cells between two fixed 0-cells in a bigroupoid form a groupoid even allows us to use the model structure for groupoids to our advantage at several points in the construction. 

The model structure on bigroupoids we give here is not the first model structure on a category whose objects are 2-categorical in nature. In \cite{MR1239560}, Moerdijk and Svensson give a model structure on the category of (small) 2-groupoids and 2-functors, and in \cite{MR1931220}, Lack gives one on the category of (small) 2-categories and 2-functors. In \cite{MR2138540} Lack corrects an error made in \cite{MR1931220}, while also giving a model structure on the category of (small) bicategories and strict homomorphisms. A bicategory is a weaker variant of a 2-category, in the same way that a bigroupoid is a weaker variant of a 2-groupoid. So, we see that model structures exist both on categories with weak and categories with strict 2-categorical objects. However, a commonality of the aforementioned categories is that all their morphisms are strict.

The morphisms of the category on which we build a model structure are the pseudofunctors, which are not strict. Pseudofunctors are more general and in many aspects, they are the more natural notion of morphism to use. This is illustrated in Example 3.1 and Remark 4.4 of \cite{MR1931220}, where morphisms that `should' exist, only exist as a pseudofunctor, even if everything else is strict. It is also reflected in the fact that the cofibrations in the model structure we give below allow a more straightforward description than those of \cite{MR1239560}, \cite{MR1931220} and \cite{MR2138540}, despite using `the same' fibrations and weak equivalences. Moreover, the constructions in this paper are elementary, in the sense that no sophisticated machinery such as the small object argument or other transfinite constructions are used.

Weak morphisms are generally not as well-behaved as strict ones and can be, for this and other reasons, more difficult to work with. For example: although the category of 2-categories and 2-functors is complete and cocomplete by standard arguments, this argument breaks down if one also considers pseudofunctors. In fact, the category of 2-categories and pseudofunctors is neither complete nor cocomplete \cite{MR1931220}. A similar argument can be made for pseudofunctors in the context of bigroupoids. However, products and coproducts can be computed in the naive way, even in the presence of pseudofunctors, and in this paper we prove that certain pullbacks along pseudofunctors exist as well.

In the process of constructing our model structure, we make use of two coherence theorems, which are proven in their entirety in the appendix. The classical way to understand a coherence theorem is the following, as formulated by Mac Lane in \cite{MR1712872}:
\begin{quote}
\textit{A coherence theorem asserts: ``Every diagram commutes''; more modestly, that every diagram of a certain class commutes.}
\end{quote}
Since Mac Lane proved the first coherence theorem -- for monoidal categories in his case -- views have shifted on what is, or should be, considered a `coherence theorem' \cite{MR985657}, but for us the classical formulation remains the most useful one. At several points in the proofs below, the coherence theorems allow us to recognize that certain diagrams commute at a glance, trivializing computations that would have been very messy and laborious otherwise. The proofs of these coherence theorems draw heavily on \cite{MR723395} and \cite{MR3076451}, which are in turn based on \cite{MR641327} and \cite{MR1250465} respectively.

\section{The category of bigroupoids}

\subsection{Bigroupoids}

Before introducing bigroupoids, we will define a wider class of structures which we imaginatively name \textit{incoherent bigroupoids}. This weaker notion ignores the usual coherence conditions and is exclusively used as a convenient intermediary step in some of the constructions. Unless otherwise specified, the structures in this paper are bigroupoids.

\begin{dfn}
An \textit{incoherent bigroupoid} $\mathcal{B}$ consists of the following data:

\begin{itemize}
\item{A set $\mathcal{B}_0$ (with elements \emph{0-cells} $A, B, \ldots$)}
\item{For every combination of 0-cells $A,B$ a groupoid $\mathcal{B}(A, B)$ (with objects \emph{1-cells} $f, g, \ldots$ and arrows \emph{2-cells} $\alpha, \beta, \ldots$)}
\item{For every combination of 0-cells $A, B, C$ a functor
	\begin{align*}
	\mathbf{C}_{A, B, C} : \mathcal{B}(B, C) \times \mathcal{B}(A, B) & \longrightarrow \mathcal{B}(A, C)\\
	(g, f) & \longmapsto g * f \\
	(\beta, \alpha) & \longmapsto \beta * \alpha	
	\end{align*} }
\item{For every 0-cell $A$ a functor
	\begin{align*}
	\mathbf{U}_{A} : 1 & \longrightarrow \mathcal{B}(A, A)\\
	\bullet & \longmapsto 1_A \\
	\mathrm{id}_{\bullet} & \longmapsto \mathrm{id}_{1_A}	
	\end{align*} }
\item{For every combination of 0-cells $A, B$ a functor
	\begin{align*}
	\mathbf{I}_{A, B} : \mathcal{B}(A, B) & \longrightarrow \mathcal{B}(B, A)\\
	f & \longmapsto f^{*} \\
	\alpha & \longmapsto \alpha^{*}
	\end{align*} }
\item{For every combination of 0-cells $A, B, C, D$ a natural isomorphism
	\begin{equation*}
	\begin{tikzcd}[row sep=huge, column sep=huge]
	\mathcal{B}(C, D) \times \mathcal{B}(B, C) \times \mathcal{B}(A, B) \arrow[r, "\mathrm{id} \times \mathbf{C}_{A, B, C}"] \arrow[d, swap, "\mathbf{C}_{B, C, D} \times \mathrm{id}"] & \mathcal{B}(C, D) \times \mathcal{B}(A, C) \arrow[d, "\mathbf{C}_{A, C, D}"] \\
	\mathcal{B}(B, D) \times \mathcal{B}(A, B) \arrow[r, swap, "\mathbf{C}_{A, B, D}"]  \arrow[ru, Rightarrow, shorten >=40pt, shorten <=40pt, "\mathbf{a}_{A, B, C, D}"] & \mathcal{B}(A, D)
	\end{tikzcd}
	\end{equation*}}
\item{For every combination of 0-cells $A, B$ natural isomorphisms
	\begin{equation*}
	\begin{tikzcd}[row sep=huge, column sep=huge]
	\mathcal{B}(A, B) \times 1 \arrow[d, swap, "\mathrm{id} \times \mathbf{U}_{A}"] \arrow[dr, sloped, "\sim"{name=A}] & &[-40pt] \mathcal{B}(A, B) \arrow[r, "!"] \arrow[d, swap, "{\langle \mathbf{I}_{A,B} , \mathrm{id} \rangle}"] & 1 \arrow[d, "\mathbf{U}_{A}"] \\
	\mathcal{B}(A, B) \times \mathcal{B}(A, A) \arrow[r, swap, "\mathbf{C}_{A, A, B}"] & \mathcal{B}(A, B) & \mathcal{B}(B, A) \times \mathcal{B}(A, B) \arrow[r, swap, "\mathbf{C}_{A, B, A}"] \arrow[ru, Rightarrow, shorten >=45pt, shorten <=45pt, "\mathbf{e}_{A, B}"] & \mathcal{B}(A, A) \\[-20pt]
	1 \times \mathcal{B}(A, B) \arrow[d, swap, "\mathbf{U}_{B} \times \mathrm{id}"] \arrow[dr, sloped, "\sim"{name=B}] &  & \mathcal{B}(A, B) \arrow[r, "{\langle \mathrm{id} , \mathbf{I}_{A,B} \rangle}"] \arrow[d, swap, "!"] & \mathcal{B}(A, B) \times \mathcal{B}(B, A) \arrow[d, "\mathbf{C}_{B, A, B}"] \\
	\mathcal{B}(B, B) \times \mathcal{B}(A, B) \arrow[r, swap, "\mathbf{C}_{A, B, B}"] & \mathcal{B}(A, B) & 1 \arrow[r, swap, "\mathbf{U}_{B}"] \arrow[ru, Rightarrow, shorten >=45pt, shorten <=45pt, "\mathbf{i}_{A, B}"] & \mathcal{B}(B, B)
	\arrow[Rightarrow, shorten >=10pt, shorten <=10pt, from=2-1, to=A, "\mathbf{r}_{A,B}"]
	\arrow[Rightarrow, shorten >=10pt, shorten <=10pt, from=4-1, to=B, "\mathbf{l}_{A,B}"]
	\end{tikzcd}
	\end{equation*}}
\end{itemize}
\end{dfn}

\begin{rmk} \label{localrmk}
The properties of the groupoids $\mathcal{B}(A, B)$ are referred to as \textit{local} properties. For example, if every $\mathcal{B}(A, B)$ is discrete, it is said that $\mathcal{B}$ is locally discrete.
\end{rmk}

\begin{dfn}
A \textit{bigroupoid} $\mathcal{B}$ is an incoherent bigroupoid satisfying the following extra conditions:
\begin{itemize}
\item{For every combination
	\begin{equation*}
	A \overset{f}{\longrightarrow} B \overset{g}{\longrightarrow} C \overset{h}{\longrightarrow} D \overset{k}{\longrightarrow} E
	\end{equation*}
	of composable 1-cells, the following diagram commutes
	\begin{equation} \label{coh1}
	\begin{tikzcd}[row sep=huge, column sep=huge]
	((kh)g)f \arrow[r, "\mathbf{a} * \mathrm{id}" ] \arrow[d, swap, "\mathbf{a}" ] & (k(hg))f \arrow[r, "\mathbf{a}" ] & k((hg)f) \arrow[d, "\mathrm{id} * \mathbf{a}"] \\
	(kh)(gf) \arrow[rr, swap, "\mathbf{a}" ] & & k(h(gf))
	\end{tikzcd}
	\end{equation}}
\item{For every combination
	\begin{equation*}
	A \overset{f}{\longrightarrow} B \overset{g}{\longrightarrow} C
	\end{equation*}
	of composable 1-cells, the following diagram commutes
	\begin{equation} \label{coh2}
	\begin{tikzcd}[row sep=huge, column sep=huge]
	(g1)f \arrow[rr, "\mathbf{a}"] \arrow[dr, swap, "\mathbf{r} * \mathrm{id}"] & & g(1f) \arrow[dl, "\mathrm{id} * \mathbf{l}"] \\
	& gf &
	\end{tikzcd}
	\end{equation}}
\item{For every 1-cell
	\begin{equation*}
	A \overset{f}{\longrightarrow} B
	\end{equation*}
	the following diagram commutes
	\begin{equation} \label{coh3}
	\begin{tikzcd}[row sep=huge, column sep=huge]
	1f \arrow[r, "\mathbf{i} * \mathrm{id}" ] \arrow[d, swap, "\mathbf{l}"] & (ff^*)f \arrow[r, "\mathbf{a}"] & f(f^*f) \arrow[d, "\mathrm{id} * \mathbf{e}"] \\
	f & & f1 \arrow[ll, "\mathbf{r}"]
	\end{tikzcd}
	\end{equation}}
\end{itemize}
\end{dfn}

\begin{rmk}
We will sometimes write $-*-$ for the functor $\mathcal{C}_{A, B, C}$ and shorten $g*f$ by $gf$, for 1-cells $f$ and $g$. The action of the functor $-*-$ on 2-cells is sometimes referred to as \textit{horizontal composition}, to distinguish it from the ordinary composition of 2-cells as arrows in a category, which is in turn referred to as \textit{vertical composition} and is usually denoted by $- \circ -$.
\end{rmk}

\begin{dfn}
A \textit{strict bigroupoid} or \textit{2-groupoid} is a bigroupoid in which the natural isomorphisms $\mathbf{a}$, $\mathbf{l}$, $\mathbf{r}$, $\mathbf{e}$ and $\mathbf{i}$ are all identities.
\end{dfn}

\subsection{Morphisms of bigroupoids}

As in the previous section, we first introduce a weaker notion of morphism, which ignores coherence conditions.
\begin{dfn}
An \textit{incoherent morphism} $(F, \phi)$ from a (possibly incoherent) bigroupoid $\mathcal{B}$ to  a (possibly incoherent) bigroupoid $\mathcal{B}'$ consists of the following data:

\begin{itemize}
\item{A function 
	\begin{equation*}
	F : \mathcal{B}_{0} \longrightarrow \mathcal{B}_{0}'
	\end{equation*}}
\item{For every combination of 0-cells $A, B$ in $\mathcal{B}$ a functor 
	\begin{equation*}
	F_{A, B} : \mathcal{B}(A, B) \longrightarrow \mathcal{B}'(FA, FB)
	\end{equation*}}
\item{For every combination of 0-cells $A, B, C$ in $\mathcal{B}$ a natural isomorphism
	\begin{equation*}
	\begin{tikzcd}[row sep=huge, column sep=huge]
	\mathcal{B}(B, C) \times \mathcal{B}(A, B) \arrow[r, "\mathbf{C}_{A, B, C}"] \arrow[d, swap, "F_{B, C} \times F_{A, B}"] & \mathcal{B}(A, C) \arrow[d, "F_{A, C}"] \\
	\mathcal{B}'(FB, FC) \times \mathcal{B}'(FA, FB) \arrow[r, swap, "\mathbf{C}_{FA, FB, FC}'"] \arrow[ru, Rightarrow, shorten >=40pt, shorten <=40pt, "\phi_{A, B, C}"]  & \mathcal{B}'(FA, FC)
	\end{tikzcd}
	\end{equation*}}
\item{For every 0-cell $A$ in $\mathcal{B}$ a natural isomorphism
	\begin{equation*}
	\begin{tikzcd}[row sep=huge, column sep=huge]
	1 \arrow[r, "\mathbf{U}_{A}"] \arrow[d, swap, "\mathrm{id}"] & \mathcal{B}(A, A) \arrow[d, "F_{A, A}"] \\
	1 \arrow[r, swap, "\mathbf{U}_{FA}'"] \arrow[ru, Rightarrow, shorten >=25pt, shorten <=25pt, "\phi_{A}"] & \mathcal{B}'(FA, FA)
	\end{tikzcd}
	\end{equation*}}
\item{For every combination of 0-cells $A, B$ in $\mathcal{B}$ a natural isomorphism
	\begin{equation*}
	\begin{tikzcd}[row sep=huge, column sep=huge]
	\mathcal{B}(A, B) \arrow[r, "\mathbf{I}_{A, B}"] \arrow[d, swap, "F_{A, B}"] & \mathcal{B}(B, A) \arrow[d, "F_{B, A}"] \\
	\mathcal{B}'(FA, FB) \arrow[r, swap, "\mathbf{I}_{FA, FB}'"] \arrow[ru, Rightarrow, shorten >=30pt, shorten <=30pt, "\phi_{A, B}"] & \mathcal{B}'(FB, FA)
	\end{tikzcd}
	\end{equation*}}
\end{itemize}
\end{dfn}

\begin{rmk}
The properties of the functors $F_{A, B}$ are referred to as \textit{local} properties. For example, if every $F_{A, B}$ is faithful, it is said that $(F, \phi)$ is locally faithful. (This is similar to Remark \ref{localrmk}.)
\end{rmk}

\begin{dfn}
A \textit{morphism} $(F, \phi)$ from a (possibly incoherent) bigroupoid $\mathcal{B}$ to  a (possibly incoherent) bigroupoid $\mathcal{B}'$ is an incoherent morphism satisfying the following extra conditions:
\begin{itemize}
\item{For every combination
	\begin{equation*}
	A \overset{f}{\longrightarrow} B \overset{g}{\longrightarrow} C \overset{h}{\longrightarrow} D 
	\end{equation*}
	of composable 1-cells, the following diagram commutes
\begin{equation}\label{coh4}
	\begin{tikzcd}[row sep=huge, column sep=huge]
	(Fh * Fg) * Ff \arrow[r, "\phi * \mathrm{id}"] \arrow[d, swap, "\mathbf{a}'"] & F(h * g) * Ff \arrow[r, "\phi"] & F((h * g) * f) \arrow[d, "F\mathbf{a}"] \\
	Fh * (Fg * Ff) \arrow[r, swap, "\mathrm{id} * \phi"] & Fh * F(g * f) \arrow[r, swap, "\phi"] & F(h * (g * f))
	\end{tikzcd}
	\end{equation}}
\item{For every 1-cell
	\begin{equation*}
	A \overset{f}{\longrightarrow} B
	\end{equation*}
	the following diagrams commute
	\begin{equation}\label{coh5}
	\begin{tikzcd}[row sep=huge]
	Ff * 1_{FA} \arrow[r, "\mathrm{id} * \phi"] \arrow[d, swap, "\mathbf{r}'"] & Ff * F 1_{A} \arrow[r, "\phi"] & F(f * 1_{A}) \arrow[d, "F \mathbf{r}"] &[-20pt] 1_{FB} * Ff \arrow[r, "\phi * \mathrm{id}"] \arrow[d, swap, "\mathbf{l}'"] & F 1_{B} * Ff \arrow[r, "\phi"] & F(1_{B} * f) \arrow[d, "F \mathbf{l}"] \\
	Ff \arrow[rr, swap, "\mathrm{id}"] & & Ff & Ff \arrow[rr, swap, "\mathrm{id}"] & & Ff \\[-20pt]
	(Ff)^{*} * Ff \arrow[r, "\phi * \mathrm{id}" ] \arrow[d, swap, "\mathbf{e}'"] & F(f^{*}) * Ff \arrow[r, "\phi"] & F( f^{*} * f) \arrow[d, "F \mathbf{e}"] & 1_{FB} \arrow[d, swap, "\mathbf{i}'" ] \arrow[rr, "\phi"] & & F 1_{B} \arrow[d, "F \mathbf{i}"] \\
	1_{FA} \arrow[rr, swap, "\phi"] & & F 1_{A} & Ff * (Ff)^{*} \arrow[r, swap, "\mathrm{id} * \phi"] & Ff * F (f^{*}) \arrow[r, swap, "\phi"] & F(f * f^{*})
	\end{tikzcd}
	\end{equation}}
\end{itemize}
\end{dfn}

\begin{rmk}
These types of morphisms are sometimes referred to as \textit{pseudofunctors} or \textit{weak 2-functors}, since they are not, in general, structure preserving maps. A morphism $(F, \phi)$ for which $\phi = \mathrm{id}$ and which therefore does preserves all structure (not just up to isomorphism) is called \textit{strict}.
\end{rmk}

The composition of two (possibly incoherent) morphisms $(F, \phi) : \mathcal{B} \longrightarrow \mathcal{B}'$ and $(G, \psi) : \mathcal{B}' \longrightarrow \mathcal{B}''$ is given by
\begin{equation*}
(G, \psi) \circ (F, \phi) = (G \circ F, G \phi \circ \psi F) : \mathcal{B} \longrightarrow \mathcal{B}''
\end{equation*}
Here, $G \phi \circ \psi F$ represents the pasting of diagrams, as in:
\begin{equation*}
\begin{tikzcd}[row sep=huge, column sep=huge]
\mathcal{B}(A, B) \arrow[r, "\mathbf{I}_{A, B}"] \arrow[d, swap, "F_{A, B}"] & \mathcal{B}(B, A) \arrow[d, "F_{B, A}"] \\
\mathcal{B}'(FA, FB) \arrow[r, "\mathbf{I}_{FA, FB}"] \arrow[d, swap, "G_{FA, FB}"] \arrow[ru, shorten >=35pt, shorten <=35pt, Rightarrow, "\phi_{A, B}"] & \mathcal{B}'(FB, FA) \arrow[d, "G_{FB, FA}"] \\
\mathcal{B}''(GFA, GFB) \arrow[r, swap, "\mathbf{I}_{GFA, GFB}"] \arrow[ru, shorten >=35pt, shorten <=35pt, Rightarrow, "\psi_{FA, FB}"] & \mathcal{B}''(GFB, GFA) 
\end{tikzcd}
\end{equation*}
This operation is clearly associative with identity.

\begin{rmk}
In many of the upcoming proofs, we need to make separate constructions concerning composition, inversion and identity respectively. However, since these three types of constructions are usually highly similar, we will generally only provide the one for composition. We will not mention this omission in every individual proof.
\end{rmk}

Let us prove two useful lemmas which show that maps and structures can `inherit' coherence properties to some extent.

\begin{lem}\label{lem1}
Let
\begin{equation*}
\begin{tikzcd}[row sep=huge, column sep=huge]
\mathcal{A} \arrow[r, "{(F, \phi)}"] \arrow[dr, swap, "{(H, \eta)}"] & \mathcal{B} \arrow[d, "{(G, \gamma)}"] \\
& \mathcal{C}
\end{tikzcd}
\end{equation*}
be a commutative diagram of incoherent morphisms between (possibly incoherent) bigroupoids. If two of the following conditions are satisfied, then so is the third:
\begin{description}
\item[(1)] The diagrams (\ref{coh4}) and (\ref{coh5}) commute for $\gamma F$.
\item[(2)] The diagrams (\ref{coh4}) and (\ref{coh5}) commute for $\phi$, after $G$ is applied to them.
\item[(3)] The diagrams (\ref{coh4}) and (\ref{coh5}) commute for $\eta$.
\end{description}
\end{lem}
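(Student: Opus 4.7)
The plan is to decompose the coherence diagrams for $\eta$ into sub-regions, each of which either corresponds to one of the three conditions or commutes for free by naturality and functoriality. The conclusion then follows by a two-out-of-three argument.

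First, I would consider diagram (\ref{coh4}) for $\eta$ with composable $1$-cells $f,g,h$ in $\mathcal{A}$. Since $H = GF$ and the composition rule for morphisms gives component-wise $\eta_{g,f} = G\phi_{g,f} \circ \gamma_{Fg, Ff}$, I would expand every occurrence of $\eta$ in (\ref{coh4}) as this vertical composite of $2$-cells. This yields an enlarged diagram whose top and bottom paths each factor through intermediate objects of the form $G(Fh * Fg) * GFf$, $G(F(h*g)) * GFf$, and so on.

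Next, I would slice this enlarged diagram into three layers: \textbf{(i)} an outer layer whose arrows involve only the associator $\mathbf{a}$ in $\mathcal{C}$ together with whiskerings of $\gamma$ at $F$-cells, which is exactly (\ref{coh4}) for $\gamma$ evaluated at the triple $Ff, Fg, Fh$, i.e. condition \textbf{(1)}; \textbf{(ii)} an inner layer whose arrows involve only $G$ applied to whiskerings and composites of $\phi$, together with $GF\mathbf{a}$, which is the image under $G_{FA, FD}$ of (\ref{coh4}) for $\phi$, i.e. condition \textbf{(2)}; and \textbf{(iii)} two intermediate squares comparing the whiskerings of $\gamma$ and $G\phi$ against one another, which commute automatically by naturality of $\gamma$ at the $2$-cells $\phi_{h,g} * \mathrm{id}_{Ff}$ and $\mathrm{id}_{Fh} * \phi_{g,f}$, together with a square witnessing that $G_{FA, FD}$ preserves vertical composition.

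Once this decomposition is in hand, the outer diagram (\ref{coh4}) for $\eta$ commutes if and only if all three of (i), (ii), (iii) commute; since the squares in (iii) are automatic, any two of conditions \textbf{(1)}, \textbf{(2)}, \textbf{(3)} force the third. The arguments for the unitor coherences, the counit coherence $\mathbf{e}$ and the unit-inversion coherence $\mathbf{i}$ from (\ref{coh5}) proceed by the same expansion-and-decomposition pattern, substituting the relevant natural isomorphisms of $(F, \phi)$, $(G, \gamma)$ and $(H, \eta)$ where appropriate.

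The main obstacle, as is typical with pasting arguments of this size, is strictly administrative: carefully tracking the $1$-cells of $\mathcal{C}$ that appear as vertices of the expanded diagram, and verifying that each auxiliary square in layer (iii) is genuinely an instance of naturality of $\gamma$ or of functoriality of $G_{FA, FD}$, rather than some subtly different relation. Once (\ref{coh4}) is settled, the five variants in (\ref{coh5}) carry essentially no new content and, in the spirit of the preceding remark, would be left to the reader or dispatched in a line each.
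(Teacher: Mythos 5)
Your proposal is correct and follows essentially the same route as the paper: the paper's proof for $\mathbf{a}$ pastes together exactly the decomposition you describe, with the left inner rectangle being condition \textbf{(1)}, the right inner rectangle being $G$ applied to (\ref{coh4}) for $\phi$ (condition \textbf{(2)}), the perimeter being condition \textbf{(3)}, and the remaining regions commuting by naturality of $\gamma$ and the identity $(G,\gamma)\circ(F,\phi)=(H,\eta)$. The two-out-of-three conclusion then goes through because all the 2-cells involved are invertible, just as you argue.
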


\begin{proof}
We only consider $\mathbf{a}$. The proofs for $\mathbf{l}$, $\mathbf{r}$, $\mathbf{e}$ and $\mathbf{i}$ are similar. The commutativity of the left inner rectangle, the right inner rectangle and the perimeter of the following diagram correspond to condition \textbf{(1)}, \textbf{(2)} and \textbf{(3)}, respectively.
\begin{equation*}
\begin{tikzcd}[row sep=huge, column sep=huge]
& & \cdot \arrow[dr, swap, "\gamma F"] \arrow[drr, "\eta"] & & \\
\cdot \arrow[r, swap, "\gamma F * \mathrm{id}"] \arrow[rru, "\eta * \mathrm{id}"] \arrow[d, swap, "\mathbf{a}"] & \cdot \arrow[r, swap, "\gamma F"] \arrow[ru, swap, "G \phi * \mathrm{id}"] & \cdot \arrow[r, swap, "G( \phi * \mathrm{id})"] \arrow[d, "G \mathbf{a}"] & \cdot \arrow[r, swap, "G \phi"] & \cdot \arrow[d, "GF \mathbf{a}"] \\
\cdot \arrow[r, "\mathrm{id} * \gamma F"] \arrow[rrd, swap, "\mathrm{id} * \eta"] & \cdot \arrow[r, "\gamma F"] \arrow[rd, "\mathrm{id} * G \phi"] & \cdot \arrow[r, "G( \mathrm{id} * \phi)"] & \cdot \arrow[r, "G \phi"] & \cdot \\
& & \cdot \arrow[ru, "\gamma F"] \arrow[rru, swap, "\eta"] & & \\
\end{tikzcd}
\end{equation*}
Since the other components of the diagram commute by naturality of $\gamma$ and the fact that $(G, \gamma) \circ ( F, \phi) = (H, \eta)$, irrespective of the three conditions, this proves the lemma.
\end{proof}

\begin{cor}
Morphisms between bigroupoids are closed under composition, so the collection of bigroupoids forms a category.
\end{cor}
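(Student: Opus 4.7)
The plan is to reduce the entire statement to a direct application of Lemma \ref{lem1}. Given morphisms $(F, \phi) : \mathcal{B} \to \mathcal{B}'$ and $(G, \psi) : \mathcal{B}' \to \mathcal{B}''$ of bigroupoids, I would set $(H, \eta) := (G, \psi) \circ (F, \phi)$ and consider the (trivially) commutative triangle of incoherent morphisms
\begin{equation*}
\begin{tikzcd}[row sep=large, column sep=large]
\mathcal{B} \arrow[r, "{(F,\phi)}"] \arrow[dr, swap, "{(H,\eta)}"] & \mathcal{B}' \arrow[d, "{(G,\psi)}"] \\
& \mathcal{B}''
\end{tikzcd}
\end{equation*}
Lemma \ref{lem1} (with $\gamma = \psi$) then tells me that condition \textbf{(3)}, which asserts the coherence of $\eta$, follows once I verify conditions \textbf{(1)} and \textbf{(2)}. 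This is exactly the statement that $(H, \eta)$ is a morphism, so the whole corollary reduces to checking those two hypotheses.

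For \textbf{(1)}, note that the diagrams (\ref{coh4}) and (\ref{coh5}) for $\psi F$ evaluated at 1-cells $f, g, h$ in $\mathcal{B}$ are literally the corresponding coherence diagrams for $\psi$ evaluated at the 1-cells $Ff, Fg, Fh$ in $\mathcal{B}'$. Since $(G, \psi)$ is assumed to be a morphism, these commute. For \textbf{(2)}, the hypothesis that $(F, \phi)$ is a morphism gives the commutativity of the (\ref{coh4}) and (\ref{coh5}) diagrams for $\phi$ in $\mathcal{B}'$, and since the local components of $G$ are ordinary functors, their application preserves commutativity; this is exactly what \textbf{(2)} demands.

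Once closure under composition is established, the category claim is essentially already in the excerpt: the identity on a bigroupoid is the strict morphism with $\phi = \mathrm{id}$, which vacuously satisfies (\ref{coh4}) and (\ref{coh5}), and associativity of the composition was noted to hold at the level of incoherent morphisms and therefore descends to morphisms. I do not anticipate any genuine obstacle here; Lemma \ref{lem1} was clearly set up precisely to absorb the bookkeeping, and the only real content is recognizing \textbf{(1)} and \textbf{(2)} as the two obvious instances of ``coherence of a factor'' and ``functors preserve commutative diagrams''.
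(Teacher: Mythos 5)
Your proposal is correct and follows exactly the paper's route: the paper's entire proof is the one-line invocation of $\textbf{(1)} + \textbf{(2)} \Longrightarrow \textbf{(3)}$ of Lemma \ref{lem1}, and your verifications of hypotheses \textbf{(1)} and \textbf{(2)} (coherence of $\psi$ at image 1-cells, and functors preserving commutative diagrams) are precisely the routine checks the paper leaves implicit. The remarks on identities and associativity likewise match what the paper already records for incoherent morphisms.
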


\begin{proof}
This follows directly from $\textbf{(1)} + \textbf{(2)} \Longrightarrow \textbf{(3)}$ of Lemma \ref{lem1}.
\end{proof}

\begin{lem}\label{lem2}
Let $(F, \phi) : \mathcal{A} \longrightarrow \mathcal{B}$ be a morphism between incoherent bigroupoids. Then the following are equivalent:
\begin{description}
\item[(1)] The diagrams (\ref{coh1}), (\ref{coh2}) and (\ref{coh3}) commute for 1-cells in the image of $F$.
\item[(2)] The diagrams (\ref{coh1}), (\ref{coh2}) and (\ref{coh3}) commute, after $F$ is applied to them.
\end{description}
\end{lem}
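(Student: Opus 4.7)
The plan is to show, for each of (\ref{coh1})--(\ref{coh3}), that after $F$ is applied, the resulting diagram in $\mathcal B$ differs from the corresponding diagram for image 1-cells only by a ``framing'' of $\phi$-components, which is invertible and whose compatibility is unconditional. Concretely, I will construct a prism in $\mathcal B$ whose two end faces are the two diagrams to be compared, and whose side faces commute without any appeal to (\ref{coh1})--(\ref{coh3}). Since every $\phi$-component is a 2-isomorphism, this forces the two end faces to be simultaneously commutative or not, proving $\textbf{(1)} \Leftrightarrow \textbf{(2)}$.

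I would carry out the argument in detail for (\ref{coh1}); (\ref{coh2}) and (\ref{coh3}) follow the same template. Fix composable 1-cells $f, g, h, k$ in $\mathcal A$. At each of the five vertices of the pentagon I pick a coherence 2-isomorphism built out of components of $\phi$, for instance at $((kh)g)f$:
\begin{equation*}
\phi \circ (\phi * \mathrm{id}_{Ff}) \circ ((\phi * \mathrm{id}_{Fg}) * \mathrm{id}_{Ff}) : ((FkFh)Fg)Ff \Longrightarrow F(((kh)g)f),
\end{equation*}
and similarly at the other four vertices. These are the vertical edges of the prism, joining the pentagon whose vertices are $((FkFh)Fg)Ff, \ldots$ and whose edges are $\mathbf a'$-instances (the ``bottom'' face, which is (\ref{coh1}) for image 1-cells) to the pentagon whose vertices are $F(((kh)g)f), \ldots$ and whose edges are $F\mathbf a$-instances (the ``top'' face, which is $F$ applied to (\ref{coh1})).

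Each of the five side squares then asserts that a single $\mathbf a'$-edge agrees with the corresponding $F\mathbf a$-edge after conjugation by the chosen $\phi$-framings. The squares whose $\mathbf a$-edge is unwhiskered are filled by a direct application of (\ref{coh4}); the squares involving $\mathbf a * \mathrm{id}$ or $\mathrm{id} * \mathbf a$ are filled by pasting (\ref{coh4}) together with naturality of $\phi_{A, B, C}$, which allows one to shuffle an extra whiskering past the $\phi$'s. Invertibility of each $\phi$-component then yields that the top pentagon commutes iff the bottom one does. The arguments for (\ref{coh2}) and (\ref{coh3}) follow the same pattern, using the remaining squares in (\ref{coh5}); in the case of (\ref{coh3}) the vertices involve inverses $f^*$ and $(Ff)^*$, so the framings will also invoke the component of $\phi$ for the functor $\mathbf I$. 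The main technical point is purely bookkeeping — assembling the $\phi$-framings consistently at the vertices — but no coherence beyond (\ref{coh4}), (\ref{coh5}), and naturality of $\phi$ enters.
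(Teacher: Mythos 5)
Your proposal is correct and is essentially the paper's own argument: the paper likewise nests the two versions of each coherence diagram inside one another (it details (\ref{coh2}) rather than (\ref{coh1})), connects corresponding vertices by composites of $\phi$-components, fills the connecting faces using (\ref{coh4})--(\ref{coh5}) and naturality of $\phi$, and concludes from the invertibility of $\phi$ that either face commutes iff the other does.
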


\begin{proof}
We only consider (\ref{coh2}). The proofs for (\ref{coh1}) and (\ref{coh3}) are similar. The commutativity of the innermost triangle and outermost triangle of following diagram correspond to condition \textbf{(1)} and \textbf{(2)}, respectively.
\begin{equation*}
\begin{tikzcd}[row sep=large, column sep=large]
\cdot \arrow[rrrrrrrr, "\mathbf{a}"] \arrow[dr, "(\mathrm{id} * \phi) *\mathrm{id}"] \arrow[dddddddrrrr, swap, "\mathbf{r} * \mathrm{id}"] & & & & & & & & \cdot \arrow[dl, swap, "\mathrm{id} * ( \phi * \mathrm{id} )"] \arrow[dddddddllll, "\mathrm{id} * \mathbf{l}"] \\
& \cdot \arrow[rrrrrr, "\mathbf{a}"] \arrow[dr, "\phi * \mathrm{id}"] \arrow[dddddrrr, "F\mathbf{r} * \mathrm{id}" description] & & & & & & \cdot \arrow[dl, swap, "\mathrm{id} * \phi"] \arrow[dddddlll, swap, "\mathrm{id} * F \mathbf{l}" description] & \\
& & \cdot \arrow[dr, "\phi"] & & & & \cdot \arrow[dl, swap, "\phi"] & & \\
& & & \cdot \arrow[rr, "F \mathbf{a}"] \arrow[dr, swap, "F( \mathbf{r} * \mathrm{id})" description] & & \cdot \arrow[dl, "F( \mathrm{id} * \mathbf{l} )" description] & & & \\
& & & & \cdot & & & & \\
& & & & & & & & \\
& & & & \cdot \arrow[uu, "\phi"] & & & & \\
& & & & \cdot \arrow[u, "\mathrm{id}"] & & & &
\end{tikzcd}
\end{equation*}
Since the other components of the diagram commute by naturality of $\phi$ and the fact that $( F, \phi)$ is a morphism, irrespective of the two conditions, this proves the lemma.
\end{proof}

\section{Model structures}

Since there exist multiple nonequivalent definitions in the literature of what constitutes a model structure, we give a brief description of what we consider to be a model structure here.

\begin{dfn}
Let $f$ and $g$ be morphisms in a category $\mathcal{C}$. If for every commutative square
\begin{equation*}
\begin{tikzcd} [row sep=large, column sep=large]
\cdot \arrow[r] \arrow[d, swap, "f"] &  \cdot \arrow[d, "g"] \\
\cdot \arrow[ru, dashed, "\exists"] \arrow[r] &    \cdot 
\end{tikzcd}
\end{equation*}
a diagonal arrow exists as indicated in the diagram, then we say that \textit{$f$ has the left lifting property with respect to $g$} or, equivalently, that \textit{$g$ has the right lifting property with respect to $f$}.
\end{dfn}

\begin{dfn}
A \textit{weak factorization system} on a category $\mathcal{C}$ is a pair $( \mathcal{L}, \mathcal{R} )$ of classes of morphisms in $\mathcal{C}$ such that
\begin{description}
\item[(1)] any morphism in $\mathcal{C}$ can be factored as a morphism of $\mathcal{L}$ followed by a morphism of $\mathcal{R}$, and
\item[(2)] $\mathcal{L}$ consists precisely of those morphisms having the left lifting property with respect to every morphism in $\mathcal{R}$, and symmetrically, $\mathcal{R}$ consists precisely of those morphisms having the right lifting property with respect to every morphism in $\mathcal{L}$.
\end{description}
\end{dfn}

\begin{dfn}
A \textit{model structure} on a category $\mathcal{M}$ consists of three classes $\mathcal{F}$, $\mathcal{C}$ and $\mathcal{W}$ of morphisms in $\mathcal{M}$, called \emph{fibrations}, \emph{cofibrations} and \emph{weak equivalences} respectively, such that
\begin{description}
\item[(1)] $\mathcal{W}$ contains all isomorphisms and is closed under $2$-out-of-$3$, meaning that whenever the composition $g \circ f$ is defined and two of $f$, $g$ and $g \circ f$ lie in $\mathcal{W}$, then so does the third, and
\item[(2)] both $( \mathcal{C}, \mathcal{F} \cap \mathcal{W})$ and $( \mathcal{C} \cap \mathcal{W}, \mathcal{F})$ are weak factorization systems on $\mathcal{M}$.
\end{description}
\end{dfn}

\begin{rmk}
The classes $\mathcal{F} \cap \mathcal{W}$ and $\mathcal{C} \cap \mathcal{W}$ are commonly called the \textit{trivial fibrations} and \textit{trivial cofibrations} respectively.
\end{rmk}

We can now formulate the main theorem of this paper.

\begin{thm} \label{mainthm}
The category of bigroupoids and pseudofunctors carries a model structure, with fibrations, cofibrations and weak equivalences as given in Definitions \ref{dfn1}, \ref{dfn2} and \ref{dfn3} below.
\end{thm}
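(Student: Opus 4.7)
The plan is to follow the standard template for verifying a model structure while taking full advantage of the fact that we are free to make the classes $\mathcal{F}$, $\mathcal{C}$, $\mathcal{W}$ explicit. Since the abstract promises that every object is cofibrant and that no small object argument is needed, I expect $\mathcal{C}$ to have a very clean combinatorial description (presumably pseudofunctors that are injective on $0$-cells and locally cofibrant as functors of groupoids), and the factorizations to be produced by hand as mapping-cylinder and path-object constructions internal to bigroupoids. Throughout, the coherence lemmas from the appendix, together with Lemmas~\ref{lem1} and~\ref{lem2}, will be used to verify that these hand-built objects are genuine (coherent) bigroupoids and that the maps into and out of them are genuine pseudofunctors.

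First I would dispose of the easy axioms. Weak equivalences should be the biequivalences — pseudofunctors that are essentially surjective on $0$-cells, essentially full on $1$-cells up to invertible $2$-cells, and locally fully faithful on $2$-cells. That $\mathcal{W}$ contains all isomorphisms is obvious; $2$-out-of-$3$ reduces to the analogous property for equivalences of groupoids on each hom-groupoid, plus a short argument at the level of $0$-cells. I would then identify the two decorated classes: trivial fibrations are fibrations that are also biequivalences, which in practice should coincide with pseudofunctors that are surjective on $0$-cells and locally a trivial fibration of groupoids; trivial cofibrations should be cofibrations that are also biequivalences, and in particular should be locally trivial cofibrations of groupoids admitting a $1$-cell-wise section in an appropriate sense.

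The core of the proof is the construction of the two factorizations. Given a pseudofunctor $(F,\phi):\mathcal{A}\to\mathcal{B}$, I would factor through a mapping-cylinder style bigroupoid $\mathcal{M}$ whose $0$-cells are $\mathcal{A}_0 \sqcup \mathcal{B}_0$, with hom-groupoids interpolating between $\mathcal{A}$ and $\mathcal{B}$ via formal adjoint equivalences witnessing each $FA \simeq FA$. The first leg $\mathcal{A}\hookrightarrow \mathcal{M}$ is then manifestly a cofibration (injective on $0$-cells, locally a cofibration of groupoids) and the second leg $\mathcal{M}\to\mathcal{B}$ is a trivial fibration by construction. For the $(\mathcal{C}\cap\mathcal{W},\mathcal{F})$ factorization I would dually build a path-object style bigroupoid whose $0$-cells are equivalences $A\simeq B$ in $\mathcal{B}$ and whose hom-groupoids record compatible squares; the embedding of $\mathcal{A}$ is a trivial cofibration by the abundance of equivalences, and the evaluation map to $\mathcal{B}$ is a fibration. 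Verifying that these constructions give coherent bigroupoids is where Lemmas~\ref{lem1} and~\ref{lem2} pay off: the tautological projections factor many of the coherence diagrams through ones already known to commute in $\mathcal{A}$ or $\mathcal{B}$.

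Finally I would check that each pair $(\mathcal{C},\mathcal{F}\cap\mathcal{W})$ and $(\mathcal{C}\cap\mathcal{W},\mathcal{F})$ is a weak factorization system. One half is automatic from the definitions of the classes by lifting properties (if the classes are set up that way); the other half requires the retract argument, which goes through in the standard manner once the factorizations above are in hand. The main obstacle I expect is the explicit verification of the lifting property against \emph{pseudofunctors} rather than strict $2$-functors: a lift against a square of pseudofunctors has to be chosen not only on $0$-, $1$- and $2$-cells but also together with invertible compositor and unitor $2$-cells satisfying the coherence diagrams \eqref{coh4} and \eqref{coh5}. This is precisely where the coherence theorems of the appendix will be deployed, reducing the check that the chosen compositors satisfy the pentagon- and triangle-type diagrams to the observation that the relevant strings of canonical $2$-cells are forced to agree. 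Once these lifts are constructed, the description of $\mathcal{C}$ as maps with LLP against $\mathcal{F}\cap\mathcal{W}$, and of $\mathcal{F}$ as maps with RLP against $\mathcal{C}\cap\mathcal{W}$, follows by the usual retract argument applied to the factorizations.
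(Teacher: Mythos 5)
Your overall architecture matches the paper's: explicit classes as in Definitions \ref{dfn1}--\ref{dfn3}, the easy axioms reduced to the groupoid case on hom-groupoids, a disjoint-union-of-$0$-cells factorization for (cofibration, trivial fibration), a Brown-style mapping path space built from a path object for (trivial cofibration, fibration), and the retract argument to upgrade the factorizations plus one lifting property into genuine weak factorization systems. The cofibration/trivial-fibration half of your plan would go through essentially as in the paper, with Lemmas \ref{lem1} and \ref{lem2} doing the coherence bookkeeping exactly as you anticipate.

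The genuine gap is in the lifting of trivial cofibrations against fibrations, which is the hardest part of the paper and which you compress into ``this is where the coherence theorems will be deployed.'' The coherence theorems only verify that diagrams built from canonical $2$-cells commute; they do not construct the lift. The real difficulty is extending a pseudofunctor along a trivial cofibration $K$: since $K$ need be neither surjective on $0$-cells nor locally surjective on $1$-cells, one must transport the given pseudofunctor along chosen equivalences (1-cells $p_{C} : C \longrightarrow KTC$ into the image, conjugation functors of the form $p_{C'} * ( - * p_{C}^{*})$, and lifts of all this data through the fibration) and then \emph{correct} the result so that the triangle with the fibration commutes strictly rather than merely up to isomorphism. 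The paper accomplishes this by first reducing, via a pullback, to squares whose bottom edge is an identity, then splitting the trivial cofibration into a $0$-cell-surjective part and a full sub-bigroupoid inclusion (Lemma \ref{lem9}), and finally running the two lifting constructions of Lemmas \ref{surlift} and \ref{isolift}, the latter in three correction stages. None of this machinery appears in your plan. Relatedly, both your Brown factorization and this reduction require pullbacks along fibrations, whose existence is not automatic here --- the category of bigroupoids and pseudofunctors is not complete --- and must be constructed by hand (Lemma \ref{lem4}). Without these ingredients the proof of Proposition \ref{prop} does not close.
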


\begin{dfn}\label{dfn1}
A morphism $F : \mathcal{A} \longrightarrow \mathcal{B}$ is said to be a \emph{fibration} if it satisfies the following two conditions:
\begin{description}
\item[(1)] For every 0-cell $A'$ in $\mathcal{A}$ and every 1-cell $b : B \longrightarrow FA'$ in $\mathcal{B}$ there exists a 1-cell $a : A \longrightarrow A'$ in $\mathcal{A}$ such that $FA = B$ and $Fa = b$.
\item[(2)] For every 1-cell $a' : A \longrightarrow A'$ in $\mathcal{A}$ and every 2-cell $\beta : b \longrightarrow Fa'$ there exists a 2-cell $\alpha : a \longrightarrow a'$ in $\mathcal{A}$ such that $Fa = b$ and $F \alpha = \beta$.
\end{description}
\end{dfn}

\begin{dfn}\label{dfn2}
A morphism $F : \mathcal{A} \longrightarrow \mathcal{B}$ is said to be a \emph{cofibration} if it satisfies the following two conditions:
\begin{description}
\item[(1)] The function $F : \mathcal{A}_0 \longrightarrow \mathcal{B}_0$ is injective.
\item[(2)] For every combination of 0-cells $A, A'$ in $\mathcal{A}$, the functor $F_{A, A'} : \mathcal{A}( A, A') \longrightarrow \mathcal{B}( FA, FA')$ is injective on objects.
\end{description}
\end{dfn}

\begin{dfn}\label{dfn3}
A morphism $F : \mathcal{A} \longrightarrow \mathcal{B}$ is said to be a \emph{weak equivalence} if it satisfies the following two conditions:
\begin{description}
\item[(1)] For every 0-cell $B$ in $\mathcal{B}$ there exists a 0-cell $A'$ in $\mathcal{A}$ and a 1-cell $b : B \longrightarrow FA'$ in $\mathcal{B}$.
\item[(2)] For every combination of 0-cells $A, A'$ in $\mathcal{A}$, the functor $F_{A, A'} : \mathcal{A}( A, A') \longrightarrow \mathcal{B}( FA, FA')$ is an equivalence of categories.
\end{description}
\end{dfn}

\begin{rmk}
A morphism satisfying the conditions of Definition \ref{dfn3} is also known as a \textit{biequivalence}. Notice that when a morphism $F : \mathcal{A} \longrightarrow \mathcal{B}$ is in class $\mathcal{X}$ (fibrations, cofibrations, or weak equivalences), then $F$ is locally in class $\mathcal{X}$ of the canonical model structure on the category of groupoids. This is precisely the second part of Definitions \ref{dfn1}, \ref{dfn2} and \ref{dfn3}. Also note that the trivial fibrations may be characterized as those weak equivalences that are surjective on 0-cells and locally surjective on objects (1-cells).
\end{rmk}

\begin{lem} \label{lem8}
\hfill
\begin{description}
\item[(1)] Every isomorphism is a weak equivalence.
\item[(2)] The weak equivalences satisfy the \textit{2-out-of-3} property.
\item[(3)] The fibrations, cofibrations and weak equivalences are closed under retracts.
\end{description}
\end{lem}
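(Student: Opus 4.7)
The plan is to treat the three parts in sequence: (1) is immediate, (2) reduces to case analysis with one delicate step, and (3) follows from stability of the relevant properties under retracts in an ambient category.

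For (1), if $F$ is an isomorphism with inverse $G$, then $F$ is a bijection on 0-cells, and each $F_{A, A'}$ is an isomorphism of groupoids with two-sided inverse $G_{FA, FA'}$ (since $(GF)_{A, A'} = G_{FA, FA'} \circ F_{A, A'}$ and similarly for $FG$); both conditions of Definition \ref{dfn3} then follow at once.

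For (2), let $F \colon \mathcal{A} \to \mathcal{B}$ and $G \colon \mathcal{B} \to \mathcal{C}$. The composition rule yields $(G \circ F)_{A, A'} = G_{FA, FA'} \circ F_{A, A'}$, so the usual 2-out-of-3 for equivalences of categories handles condition (2) of Definition \ref{dfn3} whenever the source pair lies in the image of $F$. Three cases remain: (a) If $F$ and $G$ are weak equivalences, then so is $GF$, by chaining 1-cells $C \to GB$ and $B \to FA$ and composing (the latter under $G$). (b) If $G$ and $GF$ are weak equivalences, then so is $F$: for condition (1), given $B$ in $\mathcal{B}$, condition (1) of $GF$ yields $A$ and a 1-cell $c \colon GB \to GFA$, and essential surjectivity of $G_{B, FA}$ lifts $c$ (up to 2-isomorphism) to a 1-cell $B \to FA$; for condition (2), apply 2-out-of-3 for equivalences of categories to $G_{FA, FA'} \circ F_{A, A'}$. (c) If $F$ and $GF$ are weak equivalences, then so is $G$: condition (1) is straightforward, but condition (2) forces us to show that $G_{B, B'}$ is an equivalence for arbitrary $B, B'$ in $\mathcal{B}$, not just those in the image of $F$. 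Here one picks 1-cells $b \colon B \to FA$ and $b' \colon B' \to FA'$ and observes that whiskering by $b$ and $(b')^{*}$ induces an equivalence $\mathcal{B}(FA, FA') \to \mathcal{B}(B, B')$ (with quasi-inverse built from $b^{*}$ and $b'$, and unit/counit assembled from the coherence cells $\mathbf{a}, \mathbf{l}, \mathbf{r}, \mathbf{e}, \mathbf{i}$), and similarly on the $\mathcal{C}$ side; the structure 2-cells $\psi$ of $G$ make the resulting square commute up to natural isomorphism, so $G_{FA, FA'}$ being an equivalence forces $G_{B, B'}$ to be one. The main obstacle is case (c), which essentially uses that every 1-cell in a bigroupoid is an equivalence, so that whiskering yields equivalences of hom-groupoids.

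For (3), let $F'$ be a retract of $F$ via horizontal embeddings $\iota$, $\iota'$ and retractions $r$, $r'$ with $r \iota = \mathrm{id}$, $r' \iota' = \mathrm{id}$, $F \iota = \iota' F'$, and $F' r = r' F$. Injectivity of functions and of functors on objects is retract-stable in the category of sets, giving closure of cofibrations. For fibrations, any 1- or 2-cell in $\mathcal{B}'$ to be lifted against $F'$ can be pushed into $\mathcal{B}$ along $\iota'$, lifted against $F$, and pulled back to $\mathcal{A}'$ along $r$; the four square identities show that the result is a genuine lift in $\mathcal{A}'$. For weak equivalences, condition (1) follows by pushing into $\mathcal{B}$, lifting to $\mathcal{A}$, and applying $r$, while for condition (2) each $F'_{A', A''}$ is realized as a retract of $F_{\iota A', \iota A''}$ in the arrow category of groupoids, and equivalences of categories, characterized as the fully faithful and essentially surjective functors, are evidently stable under retracts.
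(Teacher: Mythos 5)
Your proof is correct. The paper itself offers no argument here (its proof reads only ``Straightforward''), so there is no alternative route to compare against; your write-up supplies exactly the details one would expect, and you correctly isolate and resolve the one genuinely non-trivial point, namely case (c) of 2-out-of-3, where $G_{B,B'}$ must be shown to be an equivalence for 0-cells outside the image of $F$ by whiskering along 1-cells $b$, $b'$ (which induces equivalences of hom-groupoids, in the spirit of Lemma \ref{eqlem}) and comparing via the structure cells of $G$.
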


\begin{proof}
Straightforward.
\end{proof}

\section{The cofibration - trivial fibration WFS}

In this section, we aim to prove the following proposition.

\begin{prop}
The cofibrations and trivial fibrations form a weak factorization system.
\end{prop}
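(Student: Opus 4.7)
The plan is to establish the two axioms of a weak factorization system for the pair (cofibrations, trivial fibrations): existence of factorizations, and mutual characterization of the two classes by the lifting property.

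I would first verify that every trivial fibration has the right lifting property against every cofibration. Given a commutative square with cofibration $I \colon \mathcal{A} \to \mathcal{B}$ on the left, trivial fibration $P \colon \mathcal{C} \to \mathcal{D}$ on the right, top map $H$ and bottom map $K$, I construct the lift $L \colon \mathcal{B} \to \mathcal{C}$ cell-by-cell. On 0-cells, set $L(IA) = HA$, which is well-defined by injectivity of $I$ on 0-cells, and choose $L(B)$ via strict surjectivity of $P$ on 0-cells for $B$ outside the image of $I$, using $KB$ to specify the target. On 1-cells, use the same strategy via local injectivity of $I$ on objects and local surjectivity of $P$ on 1-cells. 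On 2-cells, local full faithfulness of $P$ produces a unique lift $L\beta$ of $K\beta$; the identity $L(I\alpha) = H\alpha$ then follows by uniqueness because both sides are sent to $K(I\alpha)$ by $P$. The structure 2-cells of the pseudofunctor $L$ are defined likewise, each as the unique 2-cell making the composite with $\lambda^P$ recover the structure 2-cell of $K$; this also forces $L \circ I = H$ as a strict equality of pseudofunctors.

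For the factorization, given $F \colon \mathcal{A} \to \mathcal{B}$, I would build an intermediate bigroupoid $\mathcal{A}'$ with 0-cells $\mathcal{A}_0 \sqcup (\mathcal{B}_0 \setminus F(\mathcal{A}_0))$, an inclusion $I \colon \mathcal{A} \to \mathcal{A}'$, and a projection $P \colon \mathcal{A}' \to \mathcal{B}$ that acts as $F$ on the first summand and as the identity on the second. Hom-groupoids of $\mathcal{A}'$ are built by keeping the objects of $\mathcal{A}(A, A')$ (distinct from their $F$-images, to force local injectivity of $I$) and formally adjoining one object for every 1-cell of $\mathcal{B}(PX, PY)$ not already represented, with 2-cells and all composition, unit and inverse structure transported from $\mathcal{B}$ along the intended $P$. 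By design $I$ is a cofibration, and $P$ is strictly surjective on 0-cells, locally surjective on 1-cells, and locally fully faithful, so $P$ is a trivial fibration by the remark following Definition~\ref{dfn3}. For the converse direction of the mutual characterization, I would test a map $F$ with the left lifting property against all trivial fibrations against small explicit trivial fibrations designed to force each cofibration condition in turn: a collapse of a two-object ``walking isomorphism'' bigroupoid onto a point for 0-cell injectivity, and its local analogue for object injectivity in hom-groupoids; symmetrically, I would test a map with the right lifting property against all cofibrations against cofibrations out of the initial bigroupoid and from simple parallel-pair configurations to extract strict surjectivity on 0-cells, local surjectivity on 1-cells, and local full faithfulness.

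The hardest part will be verifying that the structure constraints $\lambda^L$ of the lift, and those of the intermediate bigroupoid $\mathcal{A}'$ in the factorization, really satisfy the pseudofunctor coherence axioms (\ref{coh4})--(\ref{coh5}) and the bigroupoid coherence axioms (\ref{coh1})--(\ref{coh3}). Lemma~\ref{lem1} and Lemma~\ref{lem2}, together with the coherence theorems announced in the introduction, are intended to absorb the bulk of this work: in each case the required diagram commutes after $P$ is applied, by the coherence already enjoyed by $K$ and $P$ (respectively $\mathcal{B}$), and then local faithfulness of $P$ propagates the commutativity back to $\mathcal{C}$ (respectively $\mathcal{A}'$).
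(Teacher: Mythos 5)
Your proposal is correct in outline and, for the two constructive halves, essentially matches the paper: the lift is built on 0-cells from injectivity of the cofibration and surjectivity of the trivial fibration, locally from groupoid-level lifting (which you unpack by hand where the paper simply invokes the model structure on groupoids), and the constraint cells of the lift are the unique preimages under the locally fully faithful trivial fibration, with coherence delegated to Lemma \ref{lem1}; the factorization likewise adjoins 0-cells and factors each hom-functor into (injective on objects) followed by (surjective on objects and fully faithful), with coherence delegated to Lemmas \ref{lem1} and \ref{lem2}. Two differences are worth recording. First, in your intermediate bigroupoid the phrase ``composition transported from $\mathcal{B}$ along the intended $P$'' conceals a genuine choice: a composite $Fg * Ff$ may have several preimages among the kept and adjoined objects, so you must fix a section of each local trivial fibration and define composition as section-after-compose-after-project (exactly the paper's $S \circ \mathbf{C} \circ (H \times H)$); the resulting inclusion is then not strict, and its constraint cells must be produced by lifting $\phi$ through the locally fully faithful projection, which is what the paper's Lemma \ref{lem6} supplies. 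Second, and more substantially, you close the weak-factorization-system axiom by exhibiting explicit test (co)fibrations detecting each condition of Definitions \ref{dfn2} and \ref{dfn3}, whereas the paper uses the retract argument: since both classes are closed under retracts (Lemma \ref{lem8}) and every map factors, a map with the left lifting property against all trivial fibrations is a retract of the cofibration occurring in its own factorization, hence a cofibration, and dually. Your route is viable but costs more: the ``local analogue'' of the walking-isomorphism collapse requires codiscrete-type bigroupoids whose possibly empty hom-groupoids need care, and detecting local faithfulness requires free bigroupoids on data containing 2-cells, which the appendix does not construct. The retract argument buys all of this for the price of the (straightforward) closure under retracts.
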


By the retract argument, it suffices to show that the cofibrations have the left lifting property with respect to the trivial fibrations and that every morphism factors as a cofibration followed by a trivial fibration.

\subsection{Lifting property}

\begin{lem}
The cofibrations have the left lifting property with respect to the trivial fibrations.
\end{lem}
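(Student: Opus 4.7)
Given a commutative square
\begin{equation*}
\begin{tikzcd}[row sep=large, column sep=large] \mathcal{A} \arrow[r, "H"] \arrow[d, "F"'] & \mathcal{X} \arrow[d, "G"] \\ \mathcal{B} \arrow[r, "K"'] & \mathcal{Y} \end{tikzcd}
\end{equation*}
with $F$ a cofibration and $G$ a trivial fibration, the plan is to construct a lift $L : \mathcal{B} \to \mathcal{X}$ with $L \circ F = H$ and $G \circ L = K$. The key inputs are the characterization from the remark after Definition \ref{dfn3}, namely that $G$ is surjective on 0-cells and each $G_{X, X'}$ is a surjective-on-objects equivalence of categories (hence locally surjective on 1-cells and locally fully faithful on 2-cells), together with Lemma \ref{lem1}.

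First I would build $L$ stage by stage. On 0-cells: if $B = FA$ for the (necessarily unique) preimage $A$, set $LB := HA$; otherwise pick any $LB \in \mathcal{X}_0$ with $G(LB) = KB$, using surjectivity of $G$ on 0-cells. On 1-cells $b : B \to B'$: if $b = Fa$ for the unique preimage $a$, set $Lb := Ha$; otherwise use local surjectivity of $G$ to pick any $Lb$ with $GLb = Kb$. For a 2-cell $\beta : b \to b'$, let $L\beta$ be the unique 2-cell $Lb \to Lb'$ with $GL\beta = K\beta$, which exists by local full-faithfulness of $G$; uniqueness automatically makes each $L_{B, B'}$ a functor and forces $L\beta = H\alpha$ whenever $\beta = F\alpha$. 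Finally, define the composition structural 2-cell $\phi^L_{b', b} : Lb' * Lb \to L(b' * b)$ as the unique lift under $G$ of $\phi^K_{b', b} \circ (\phi^G_{Lb', Lb})^{-1}$, and analogously for the unit and inverse coherence cells; naturality of $\phi^L$ follows from naturality of $\phi^K$ and $\phi^G$ together with local faithfulness of $G$. By direct calculation one then has $G\phi^L \circ \phi^G L = \phi^K$, so $G \circ L = K$ as pseudofunctors; and the equality $L \circ F = H$ on structural 2-cells follows because both sides have equal $G$-image, hence agree by local faithfulness.

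The main obstacle is verifying that $L$ satisfies the coherence axioms (\ref{coh4}) and (\ref{coh5}), because outside the image of $F$ the 1-cells $Lb$ were chosen without regard to any structure, so a naive direct check has nothing to latch onto. To handle this I would apply Lemma \ref{lem1} to the triangle $L$ followed by $G$, whose composite is the morphism $K$. Condition \textbf{(1)} of the lemma (the diagrams commute for $\phi^G L$) holds because $G$ is a morphism, and condition \textbf{(3)} (they commute for $\phi^K$) holds because $K$ is a morphism; therefore the lemma yields condition \textbf{(2)}, meaning that (\ref{coh4}) and (\ref{coh5}) commute for $\phi^L$ after $G$ is applied. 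Local faithfulness of $G$ then lifts this commutativity back to $\mathcal{X}$, proving that $L$ is a morphism of bigroupoids and completing the construction of the required lift.
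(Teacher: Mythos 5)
Your proposal is correct and follows essentially the same route as the paper: build $L$ levelwise (using injectivity of the cofibration and the surjectivity/local surjectivity/local full faithfulness of the trivial fibration), define the structural $2$-cell of $L$ as the unique $G$-lift of $\phi^K \circ (\phi^G L)^{-1}$, and obtain the coherence axioms for $L$ from Lemma \ref{lem1} via $\textbf{(1)}+\textbf{(3)}\Longrightarrow\textbf{(2)}$ together with local faithfulness of $G$. The only cosmetic difference is that you construct the local lifts of hom-groupoids by hand where the paper simply invokes the lifting property in the canonical model structure on groupoids.
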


\begin{proof}
Given a commutative square
\begin{equation} \label{cotrflift}
\begin{tikzcd}[row sep=huge, column sep=huge]
\mathcal{A} \arrow[r, "{(F, \phi)}"] \arrow[d, swap, "{(K, \kappa)}"] & \mathcal{B} \arrow[d, "{(G, \gamma)}"] \\
\mathcal{D} \arrow[r, swap, "{(H, \eta)}"] \arrow[ru, dashed, "{\exists (L, \lambda)}"] & \mathcal{C}
\end{tikzcd}
\end{equation}
in which $K$ is a cofibration and $G$ is a trivial fibration, we construct a diagonal filler $L$, as indicated in the diagram.

Let $L : \mathcal{D}_{0} \longrightarrow \mathcal{B}_{0}$ be a function which makes the diagram
\begin{equation*}
\begin{tikzcd}[row sep=huge, column sep=huge]
\mathcal{A}_{0} \arrow[r, "F"] \arrow[d, swap, "K"] & \mathcal{B}_{0} \arrow[d, "G"] \\
\mathcal{D}_{0} \arrow[r, swap, "H"] \arrow[ru, dashed, "\exists L"] & \mathcal{C}_{0}
\end{tikzcd}
\end{equation*}
commute. Such a function exists because $K : \mathcal{A}_{0} \longrightarrow \mathcal{D}_{0}$ is injective and $G : \mathcal{B}_{0} \longrightarrow \mathcal{C}_{0}$ is surjective.

Given a pair of 0-cells $D$, $D'$ both in the image of $K$, say $D = KA$ and $D' = KA'$, we define $L_{D, D'} : \mathcal{D}(D, D') \longrightarrow \mathcal{B}(LD, LD')$ by taking a diagonal
\begin{equation*}
\begin{tikzcd}[row sep=huge, column sep=huge]
\mathcal{A}(A, A') \arrow[r, "F_{A, A'}"] \arrow[d, swap, "K_{A, A'}"] & \mathcal{B}(LD, LD') \arrow[d, "G_{LD, LD'}"] \\
\mathcal{D}(D, D') \arrow[r, swap, "H_{D, D'}"] \arrow[ru, dashed, "\exists L_{D, D'}"] & \mathcal{C}(HD, HD')
\end{tikzcd}
\end{equation*}
which exists by the model structure on the category of groupoids. Given a pair of 0-cells $D, D'$ not both in the image of $K$, we define $L_{D, D'} : \mathcal{D}(D, D') \longrightarrow \mathcal{B}(LD, LD')$ by taking a diagonal
\begin{equation*}
\begin{tikzcd}[row sep=huge, column sep=huge]
0 \arrow[r, "!"] \arrow[d, swap, "!"] & \mathcal{B}(LD, LD') \arrow[d, "G_{LD, LD'}"] \\
\mathcal{D}(D, D') \arrow[r, swap, "H_{D, D'}"] \arrow[ru, dashed, "\exists L_{D, D'}"] & \mathcal{C}(HD, HD')
\end{tikzcd}
\end{equation*}
again using the model structure on the category of groupoids.

To finish the construction of $(L, \lambda)$, we use the local fully faithfulness of $G$ to define 
\begin{equation*}
\lambda = G^{-1}( \eta \circ (\gamma L)^{-1} ).
\end{equation*}
The calculation
\begin{equation*}
(G, \gamma) \circ (L, \lambda) = (G \circ L, G \lambda \circ \gamma L) = (G \circ L, G G^{-1}( \eta \circ (\gamma L)^{-1} ) \circ \gamma L) = ( H, \eta )
\end{equation*}
demonstrates that the lower right triangle of (\ref{cotrflift}) commutes. To check that the upper left triangle commutes as well, we use the fact that the square (\ref{cotrflift}) commutes to compute
\begin{equation*}
G \phi = H \kappa \circ \eta K \circ (\gamma F)^{-1} = GL \kappa \circ G G^{-1} (\eta K \circ (\gamma F)^{-1}) = G( L \kappa \circ \lambda K),
\end{equation*}
giving the desired result
\begin{equation*}
(F, \phi) = (L \circ K, L \kappa \circ \lambda K) = (L, \lambda) \circ (K, \kappa),
\end{equation*}
by the local faithfulness of $G$.

Lastly, we show that $(L, \lambda)$ is a morphism by verifying that (\ref{coh4}) and (\ref{coh5}) commute for $\lambda$. Since $G$ locally is faithful, it suffices to check that these diagrams commute after $G$ is applied to them. But this follows directly from $\textbf{(1)} + \textbf{(3)} \Longrightarrow \textbf{(2)}$ of Lemma \ref{lem1}.
\end{proof}

\subsection{Factorization}

\begin{lem} \label{lem6}
Given a square of categories which commutes up to a natural isomorphism $\alpha : F H \Longrightarrow F K$
\begin{equation*}
\begin{tikzcd}[row sep=huge, column sep=huge]
\mathcal{A} \arrow[r, "K"] \arrow[d, swap, "H"] & \mathcal{B} \arrow[d, "F"] \\
\mathcal{B} \arrow[r, swap, "F"] \arrow[ru, Rightarrow, shorten >=20pt, shorten <=20pt, "\alpha"] & \mathcal{C}
\end{tikzcd}
\qquad = \qquad
\begin{tikzcd} [row sep=huge, column sep=huge]
\mathcal{A} \arrow[r, bend right=40, swap, "H"{name=H}] \arrow[r, bend left=40, "K"{name=K}] & \mathcal{B} \arrow[r, "F"] & \mathcal{C}
\arrow[from=H, to=K, dashed, Rightarrow, shorten >=5pt, shorten <=5pt, "\exists ! \beta"]
\end{tikzcd}
\end{equation*}
in which $F$ is an equivalence of categories, there exists a unique natural isomorphism $\beta : H \Longrightarrow K$ such that $F \beta = \alpha$.
\end{lem}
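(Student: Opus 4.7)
The key observation is that an equivalence of categories is in particular fully faithful, so for every pair of objects $X, Y$ in $\mathcal{B}$ the map
\begin{equation*}
F_{X,Y} : \mathrm{Hom}_{\mathcal{B}}(X, Y) \longrightarrow \mathrm{Hom}_{\mathcal{C}}(FX, FY)
\end{equation*}
is a bijection. My plan is to use this bijection to define $\beta$ componentwise and then verify naturality and that $\beta$ is an isomorphism, both by applying $F$ and using faithfulness.

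More precisely, for each object $A$ of $\mathcal{A}$ the component $\alpha_A : FHA \longrightarrow FKA$ lies in the image of $F_{HA, KA}$, so there is a unique arrow $\beta_A : HA \longrightarrow KA$ with $F\beta_A = \alpha_A$. Since $\alpha_A$ is an isomorphism and a fully faithful functor reflects isomorphisms (the preimages of $\alpha_A$ and $\alpha_A^{-1}$ compose to the preimage of the identity, which is the identity), $\beta_A$ is itself an isomorphism.

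To check naturality of $\beta$, let $f : A \to A'$ in $\mathcal{A}$. Applying $F$ to both sides of the desired identity $\beta_{A'} \circ Hf = Kf \circ \beta_A$ yields $\alpha_{A'} \circ FHf = FKf \circ \alpha_A$, which holds by naturality of $\alpha$. Faithfulness of $F$ then gives the naturality square for $\beta$. Uniqueness of $\beta$ is immediate: any $\beta'$ with $F\beta' = \alpha$ satisfies $F\beta'_A = F\beta_A$ for each $A$, and faithfulness forces $\beta' = \beta$.

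There is no real obstacle here; the statement is essentially a restatement of the fact that post-composition with a fully faithful functor is fully faithful on functor categories, and everything reduces to the local bijections $F_{X,Y}$.
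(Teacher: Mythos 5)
Your proof is correct, and it takes a more direct route than the paper does. You define $\beta_A$ pointwise as the unique $F$-preimage of $\alpha_A$ under the hom-set bijection $F_{HA,KA}$, and then get naturality, invertibility, and uniqueness all from faithfulness (plus fullness for existence of the preimages); everything rests only on $F$ being fully faithful. The paper instead invokes the full equivalence data: it chooses a pseudo-inverse $G$ and a unit $\eta : \mathrm{id} \Longrightarrow GF$, uses naturality of $\eta$ to show that the composite $H \Rightarrow GFH \xRightarrow{G\alpha} GFK \Rightarrow K$ is the only possible candidate for $\beta$, and then verifies $F\beta = \alpha$ by another naturality-of-$\eta$ argument (where fullness enters via writing $\alpha_A = FF^{-1}\alpha_A$). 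The two arguments prove the same thing; yours is the more economical one, isolating exactly the hypothesis needed (fully faithfulness) and matching your closing remark that the lemma is just the statement that postcomposition with a fully faithful functor is fully faithful on functor categories, while the paper's version produces an explicit formula for $\beta$ in terms of $G$ and $\eta$, which costs an extra choice of pseudo-inverse but makes the uniqueness argument read off directly from a commuting square.
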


\begin{proof}
By hypothesis, there exists a functor $G : \mathcal{A} \longrightarrow \mathcal{B}$ and a natural isomorphism $\eta : \mathrm{id} \Longrightarrow GF$. For every $A$ in $\mathcal{A}$, the square
\begin{equation*}
\begin{tikzcd}[row sep=huge, column sep=huge]
HA \arrow[r, "\beta_{A}"] \arrow[d, swap, "\eta_{HA}"] & KA \arrow[d, "\eta_{KA}"] \\
GFHA \arrow[r, swap, "GF \beta_{A}"] & GFKA
\end{tikzcd}
\end{equation*}
must commute by naturality of $\eta$. Since $F \beta_{A} = \alpha_{A}$ is required as well, this leaves the composite
\begin{equation*}
H \xRightarrow{\eta H} GFH \xRightarrow{G \alpha} GFK \xRightarrow{(\eta K)^{-1}} K
\end{equation*}
as the only possible candidate for $\beta$. We see that the square
\begin{equation*}
\begin{tikzcd}[row sep=huge, column sep=huge]
FHA \arrow[r, "\alpha_{A}"] \arrow[d, swap, "F \eta_{HA}"] & FKA \arrow[d, "F \eta_{KA}"] \\
FGFHA \arrow[r, swap, "FG \alpha_{A}"] & FGFKA
\end{tikzcd}
\end{equation*}
commutes by naturality of $\eta$, as $\alpha_{A} = FF^{-1} \alpha_{A}$. This shows that our definition of $\beta$ indeed meets the requirement $F \beta = \alpha$. 
\end{proof}

\begin{lem}
Let $(F, \phi) : \mathcal{A} \longrightarrow \mathcal{C}$ be a morphism of bigroupoids. Then there exists a factorization 
\begin{equation*}
\mathcal{A} \xrightarrow{(G, \gamma)} \mathcal{B} \xrightarrow{(H, \eta)} \mathcal{C}
\end{equation*}
of $F$, where $G$ is a cofibration and $H$ is a strict trivial fibration.
\end{lem}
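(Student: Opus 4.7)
The plan is to imitate the standard ``mapping cylinder'' factorization familiar from the groupoid model structure, adapted to our bicategorical setting. The middle bigroupoid $\mathcal{B}$ should contain a strictly injective copy of $\mathcal{A}$, while at the same time carrying enough ``extra'' hom-data lifted from $\mathcal{C}$ to make the second leg $H$ both strict and a trivial fibration.

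Concretely, I would set $\mathcal{B}_{0} := \mathcal{A}_{0} \sqcup \mathcal{C}_{0}$ and define $H : \mathcal{B}_{0} \to \mathcal{C}_{0}$ to be $F$ on the first summand and the identity on the second. For $X, Y \in \mathcal{B}_{0}$, let $\mathrm{Ob}(\mathcal{B}(X, Y))$ be the disjoint union of $\mathrm{Ob}(\mathcal{A}(X, Y))$ (taken empty unless both $X$ and $Y$ lie in $\mathcal{A}_{0}$) with $\mathrm{Ob}(\mathcal{C}(HX, HY))$, and write $\iota$ for the evident map from this disjoint union to $\mathrm{Ob}(\mathcal{C}(HX, HY))$ which is $F$ on the first summand and the identity on the second. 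Declare the 2-cells in $\mathcal{B}(X, Y)$ from $a$ to $a'$ to be precisely the 2-cells in $\mathcal{C}(HX, HY)$ from $\iota a$ to $\iota a'$, and transport composition, units, inverses, and the structure isomorphisms $\mathbf{a}, \mathbf{l}, \mathbf{r}, \mathbf{e}, \mathbf{i}$ along $\iota$, so that by construction $\iota(b *_{\mathcal{B}} a) = \iota b *_{\mathcal{C}} \iota a$, $\iota 1_{X} = 1_{HX}$, and $\iota(a^{*}) = (\iota a)^{*}$.

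Most of the verifications are then formal. The coherence axioms (\ref{coh1})-(\ref{coh3}) for $\mathcal{B}$ reduce via $\iota$ to the corresponding diagrams in $\mathcal{C}$, which commute by assumption. Extending $H$ to a strict morphism $\mathcal{B} \to \mathcal{C}$ by $\iota$ on hom-groupoids yields a map that is surjective on 0-cells and locally surjective on objects (witnessed by the $\mathcal{C}$-summands) and locally fully faithful (by the very definition of 2-cells in $\mathcal{B}$); hence $H$ is a strict trivial fibration. For $G : \mathcal{A} \to \mathcal{B}$, I take the canonical inclusion on 0-cells, the inclusion into the $\mathcal{A}$-summand on 1-cells, $F$ itself on 2-cells, and $\gamma := \phi$ as structure iso; this is legitimate because $\iota$ sends the source $Gb *_{\mathcal{B}} Ga$ to $Fb *_{\mathcal{C}} Fa$ and the target $G(b *_{\mathcal{A}} a)$ to $F(b *_{\mathcal{A}} a)$. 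Injectivity on 0-cells and local injectivity on objects for $G$ are then immediate, and the identity $H \circ G = F$ holds on the nose, structure iso included.

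The only genuinely non-routine point is the coherence of $G$ as a pseudofunctor, i.e., the commutativity of (\ref{coh4})-(\ref{coh5}) for $\gamma$. Here I would invoke Lemma \ref{lem1} applied to the composable pair $(G, \gamma)$ followed by $(H, \mathrm{id})$, whose total is $(F, \phi)$: condition \textbf{(1)} is vacuous because $H$ is strict, and condition \textbf{(3)} is exactly the coherence of the hypothesized pseudofunctor $(F, \phi)$, so condition \textbf{(2)} follows, meaning the coherence diagrams for $\gamma$ commute after postcomposition with $H$. Local faithfulness of $H$ then upgrades this to commutativity in $\mathcal{B}$ itself. The main bookkeeping hazard I anticipate is keeping careful track of whether a given composite lives in the $\mathcal{A}$-summand or the $\mathcal{C}$-summand of a hom-groupoid of $\mathcal{B}$; once the convention $\iota(b *_{\mathcal{B}} a) = \iota b *_{\mathcal{C}} \iota a$ is rigidly enforced, this ambiguity disappears and every calculation reduces to one already valid inside $\mathcal{C}$.
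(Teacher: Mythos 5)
Your proposal is correct and follows essentially the same route as the paper: $\mathcal{B}_0 = \mathcal{A}_0 + \mathcal{C}_0$, hom-groupoids obtained by locally factoring $F_{A,A'}$ through something mapping fully-faithfully and surjectively onto $\mathcal{C}(FA,FA')$, composition transported along a section so that $H$ is strict, and coherence of $\mathcal{B}$ and of $\gamma$ deduced from Lemmas \ref{lem2} and \ref{lem1} together with local faithfulness of $H$. The only (harmless) difference is that you build the local factorizations explicitly as mapping cylinders, which makes $H$ literally fully faithful and lets you take $\gamma = \phi$ outright, whereas the paper invokes the groupoid model structure as a black box and then uses Lemma \ref{lem6} to produce $\gamma$.
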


\begin{proof}
We define the 0-cells of $\mathcal{B}$ as the disjoint union of those of $\mathcal{A}$ and $\mathcal{C}$, so $\mathcal{B}_{0} = \mathcal{A}_{0} + \mathcal{C}_{0}$. We let $G : \mathcal{A}_{0} \longrightarrow \mathcal{B}_{0}$ be the inclusion map and we take $H = [F, \mathrm{id}] : \mathcal{B}_{0} \longrightarrow \mathcal{C}_{0}$.

To define the groupoids $\mathcal{B}(B, B')$, we factorize each $F_{A, A'} : \mathcal{A}(A, A') \longrightarrow \mathcal{C}(FA, FA')$  as
\begin{equation*}
\mathcal{A}(A, A') \xrightarrow{G_{A,A'}} \mathcal{B}(A, A') \xrightarrow{H_{A, A'}} \mathcal{C}(FA, FA'),
\end{equation*}
where $G_{A,A'}$ is a cofibration and $H_{A,A'}$ is a trivial fibration, using the model structure on the category of groupoids. For pairs of 0-cells of $\mathcal{B}$ not of the form $(A, A')$, we take (disjoint copies of) the groupoids in $\mathcal{C}$ corresponding to their image under $H$:
\begin{equation*}
\mathcal{B}(A, B') = \mathcal{C}(FA, B'), \qquad \mathcal{B}(B, A') = \mathcal{C}(B, FA'), \qquad \mathcal{B}(B, B') = \mathcal{C}(B, B').
\end{equation*}
The functor $H_{B, B'} : \mathcal{B}(B, B') \longrightarrow \mathcal{C}(HB, HB')$ is simply the identity in these last three cases.

We will now provide the functor $\mathbf{C}_{B, B', B''} : \mathcal{B}(B',B'') \times \mathcal{B}(B,B') \longrightarrow \mathcal{B}(B,B'')$ for a given triple of 0-cells $B$, $B'$, $B''$. Since $H_{B, B''} : \mathcal{B}(B, B'') \longrightarrow \mathcal{C}(HB, HB'')$ is a trivial fibration, it has a section $S_{B, B''} : \mathcal{C}(HB, HB'') \longrightarrow \mathcal{B}(B, B'')$. We define $\mathbf{C}_{B, B', B''}$ as the composite
\begin{equation*}
\mathcal{B}(B', B'') \times \mathcal{B}(B, B') \xrightarrow{H \times H} \mathcal{C}(HB', HB'') \times \mathcal{C}(HB, HB') \overset{\mathbf{C}}{\longrightarrow} \mathcal{C}(HB, HB'') \overset{S}{\longrightarrow} \mathcal{B}(B, B'').
\end{equation*}
Note that this makes the square
\begin{equation*}
\begin{tikzcd}[row sep=huge, column sep=huge]
\mathcal{B}(B', B'') \times \mathcal{B}(B, B') \arrow[r, "\mathbf{C}"] \arrow[d, swap, "H \times H"] & \mathcal{B}(B, B'') \arrow[d, "H"] \\
\mathcal{C}(HB', HB'') \times \mathcal{C}(HB, HB') \arrow[r, swap, "\mathbf{C}"] & \mathcal{C}(HB, HB'')
\end{tikzcd}
\end{equation*}
commute, which allows us to define $\eta = \mathrm{id}$.

Next, we define $\mathbf{a} = S \mathbf{a} H$. Since $H S \mathbf{a} H = \mathbf{a} H$ and $\eta = \mathrm{id}$, the diagram (\ref{coh4}) commutes for $\eta$. We use a similar definition for $\mathbf{l}$, $\mathbf{r}$, $\mathbf{e}$ and $\mathbf{i}$, so by the same argument the diagrams (\ref{coh5}) commute as well, hence $(H, \eta)$ is a morphism.

To show that $\mathcal{B}$ is a bigroupoid, we verify that the diagrams (\ref{coh1}), (\ref{coh2}) and (\ref{coh3}) commute. Since $H$ is locally faithful, these diagrams commute if and only if they commute after $H$ is applied to them. But this follows directly from $\textbf{(1)} \Longrightarrow \textbf{(2)}$ of Lemma \ref{lem2}.

To define $\gamma$, consider the square
\begin{equation} \label{psidef}
\begin{tikzcd}[row sep=huge, column sep=huge]
\mathcal{A}(A', A'') \times \mathcal{A}(A, A') \arrow[r, "G \circ \mathbf{C}"] \arrow[d, swap, "\mathbf{C} \circ (G \times G)"] & \mathcal{B}(GA, GA'') \arrow[d, "H"] \\
\mathcal{B}(GA, GA'') \arrow[r, swap, "H"] \arrow[ru, Rightarrow, shorten >=35pt, shorten <=35pt, "\phi \circ (\eta G)^{-1}"] & \mathcal{C}(FA, FA'')
\end{tikzcd}
\end{equation}
The calculation
\begin{equation*}
H \circ \mathbf{C} \circ (G \times G) \xRightarrow{(\eta G)^{-1}} \mathbf{C} \circ (H \times H) \circ (G \times G) = \mathbf{C} \circ (F \times F) \xRightarrow{\; \phi \;} F \circ \mathbf{C} = H \circ G \circ \mathbf{C}
\end{equation*}
shows that (\ref{psidef}) indeed commutes up to the natural isomorphism $\phi \circ (\eta G)^{-1}$. Since $H$ in (\ref{psidef}) is an equivalence of categories, Lemma \ref{lem6} provides us with a natural isomorphism
\begin{equation*}
\gamma (= \gamma_{A, A', A''}) : \mathbf{C} \circ ( G \times G ) \Longrightarrow G \circ \mathbf{C}
\end{equation*}
satisfying $H \gamma = \phi \circ (\eta G)^{-1}$. This means that we have indeed factored $(F, \phi)$ as $(H, \eta) \circ (G, \gamma)$.

To show that $(G, \gamma)$ is a morphism, we must verify that (\ref{coh4}) and (\ref{coh5}) commute for $\gamma$. Since $H$ is locally faithful, these diagrams commute if and only if they commute after $H$ is applied to them. But this follows directly from $\textbf{(1)} + \textbf{(3)} \Longrightarrow \textbf{(2)}$ of Lemma \ref{lem1}.
\end{proof}

\section{The trivial cofibration - fibration WFS}

The purpose of this section is to prove the following proposition.

\begin{prop} \label{prop}
The trivial cofibrations and fibrations form a weak factorization system.
\end{prop}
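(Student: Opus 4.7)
By the retract argument, it suffices to establish (I) the left lifting property of trivial cofibrations against fibrations and (II) a factorization of every pseudofunctor as a trivial cofibration followed by a fibration.

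For (II), I would construct an intermediate bigroupoid $\mathcal{B}$ of comma type from $(F, \phi): \mathcal{A} \to \mathcal{C}$. A 0-cell is a triple $(A, C, b)$ with $b: FA \to C$ a 1-cell of $\mathcal{C}$; a 1-cell $(A, C, b) \to (A', C', b')$ is a triple $(a, c, \beta)$ with $a: A \to A'$ in $\mathcal{A}$, $c: C \to C'$ in $\mathcal{C}$ and $\beta$ a 2-iso $c * b \Rightarrow b' * Fa$; a 2-cell is a compatible pair of 2-cells in $\mathcal{A}$ and $\mathcal{C}$. The first leg $G$ sends $A$ to $(A, FA, 1_{FA})$ and is injective on 0-cells and on local 1-cells by construction, while essential surjectivity and local equivalence follow from the canonical 1-cell $(1_A, b^{*}, \mathrm{coh}): (A, C, b) \to GA$ witnessing that every 0-cell is equivalent to one in the image of $G$. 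The second leg $H$ projects onto the middle coordinate, and its fibration properties come from the lifts $(1_{A'}, c, \mathrm{coh}): (A', C, b' * c) \to (A', C', b')$ of 1-cells $c: C \to C'$ (and an analogous construction for 2-cells). The bigroupoid axioms for $\mathcal{B}$ and the pseudofunctor axioms for $G$ and $H$ are inherited coordinatewise from $\mathcal{A}$ and $\mathcal{C}$ and verified using the coherence theorem for bigroupoids.

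For (I), given the lifting square with $K$ a trivial cofibration and $G$ a fibration, I would build $(L, \lambda)$ stepwise. On 0-cells: for each $D \in \mathcal{D}$ choose $A_D \in \mathcal{A}$ and a 1-cell $\delta_D: KA_D \to D$ by condition (1) of Definition \ref{dfn3}, with $A_{KA} = A$ and $\delta_{KA} = 1_{KA}$ on the image of $K$; set $L(KA) := FA$, and for $D$ outside the image of $K$ use condition (1) of Definition \ref{dfn1} to lift $(H\delta_D)^{*}$ along $G$, producing $LD$ and a 1-cell $\tilde\delta_D: LD \to FA_D$ with $GLD = HD$. For local functors on pairs both in the image of $K$, apply the model structure on groupoids to lift in the square whose left leg $K_{A,A'}$ is a trivial cofibration of groupoids and right leg $G_{FA, FA'}$ is a fibration of groupoids; for general pairs, transport through the chosen $\delta$s and $\tilde\delta$s reduces to the in-image case. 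The coherence datum $\lambda$ cannot be defined by inversion as in the previous section, since $G$ is only locally an isofibration rather than locally faithful; instead I will produce $\lambda(d', d)$ componentwise by applying condition (2) of Definition \ref{dfn1} to lift the 2-iso $\eta(d', d) \circ \gamma(Ld', Ld)^{-1}$ back to a 2-iso in $\mathcal{B}$ with prescribed target $L(d' * d)$, ensuring $G \lambda = \eta \circ (\gamma L)^{-1}$ and hence commutativity of the lower triangle. On the image of $K$, $\lambda$ is forced to agree with $\phi \circ (L\kappa)^{-1}$ by the upper triangle.

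The pseudofunctor coherences (\ref{coh4}) and (\ref{coh5}) for $\lambda$ would then be verified by a Lemma \ref{lem1}-style argument combined with a reduction via Lemma \ref{lem2} to 1-cells in the image of $K$, leaning on the coherence theorem for pseudofunctors to keep the diagram manipulations manageable. I expect this final coherence step to be the main obstacle: in the cofibration/trivial fibration WFS, local faithfulness of the right leg made $\lambda$ unique and let us quote Lemma \ref{lem1} directly; here the right leg is only an isofibration locally, so uniqueness of lifts is lost and the coherence of $(L, \lambda)$ has to be argued by the more delicate 2-cell lifting above, with the coherence theorems announced in the introduction doing the heavy lifting.
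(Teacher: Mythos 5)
Your factorization half is essentially the paper's: the comma-type bigroupoid you describe is precisely the pullback of the path object $\mathcal{PC}$ of Lemma \ref{lem7} along $F$, which the paper obtains abstractly via Brown's argument (Lemmas \ref{lem4} and \ref{lem10}) rather than by hand. Apart from a slip in the third component of your 1-cell lifts for $H$ (the source 0-cell should carry $c^{*} * b'$ rather than $b' * c$, which does not compose), that half is sound, and building the object explicitly is a legitimate alternative to the path-object route.

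The lifting half has a genuine gap, located exactly where you predicted trouble: the construction of $\lambda$. Once the local functors $L_{D,D'}$ are fixed, both $Ld' * Ld$ and $L(d' * d)$ are determined, so $\lambda_{d',d}$ must be a 2-cell between two \emph{prescribed} 1-cells lying over $\eta \circ (\gamma L)^{-1}$. Condition (2) of Definition \ref{dfn1} only lets you prescribe the \emph{target} of a lifted 2-cell; the source comes out as whatever the lift produces, and a local isofibration need not admit any 2-cell between two given 1-cells of a fibre even when their images downstairs are isomorphic (the discrete groupoid on two objects over the terminal groupoid is an isofibration). So the proposed 2-cell lifting does not yield a natural transformation $\mathbf{C}\circ(L\times L) \Rightarrow L \circ \mathbf{C}$. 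The paper never lifts $\lambda$ along $G$: after reducing via the pullback of Lemma \ref{lem4} to a strict $G$ over the identity and splitting $K$ by Lemma \ref{lem9}, it obtains $\lambda$ in the surjective-on-0-cells case by \emph{descending} along the local equivalence $K$ (Lemma \ref{lem5}), with $G\lambda \circ \gamma L = \eta$ then following from the uniqueness clause of that lemma; in the local-isomorphism case (Lemma \ref{isolift}) it requires a three-stage construction in which the transported functors are corrected using Lemma \ref{lem3} so that $GL = H$ holds strictly --- a second point that your ``transport through the chosen $\delta$'s'' glosses over, since naive conjugation only gives $GL \cong H$ --- and the coherence of the result is extracted from Theorems \ref{fordiagthm} and \ref{phifordiagthm}. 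Some such descent or correction mechanism is indispensable; the coherence theorems alone will not manufacture the missing 2-cells.
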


\subsection{Lifting property}

\begin{lem} \label{lem3}
Given a triangle of groupoids that commutes up to a natural isomorphism $\beta : H \Longrightarrow GF$
\begin{equation*}
\begin{tikzcd}[row sep=large, column sep=large]
& & \mathcal{B} \arrow[dd, "G"{name=G}] \\
& & \\
\mathcal{A} \arrow[rr, swap, "H"{name=H}] \arrow[rruu, bend right, "F"{name=F}] \arrow[rruu, dashed, bend left, "\exists F'"{name=F'}] &  & \mathcal{C}
\arrow[Rightarrow, dashed, from = F', to=F, shorten >=5pt, shorten <=10pt, "\exists \alpha"]
\arrow[swap, Rightarrow, from=H, to=G, shorten >=15pt, shorten <=15pt, "\beta"]
\end{tikzcd}
\end{equation*}
and in which $G$ is a fibration, there exists a functor $F'$ making the triangle commute, along with a natural isomorphism $\alpha : F' \Longrightarrow F$ such that $G \alpha = \beta$.
\end{lem}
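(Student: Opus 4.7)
The plan is to define $F'$ pointwise by lifting the components of $\beta$ along $G$, then use naturality of $\beta$ to extend the construction to morphisms.

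First, for each object $A$ in $\mathcal{A}$, I apply the fibration property of $G$ to the 1-cell $\beta_A : HA \longrightarrow GFA$ in $\mathcal{C}$: this yields an object $F'A$ in $\mathcal{B}$ together with an arrow $\alpha_A : F'A \longrightarrow FA$ such that $GF'A = HA$ and $G\alpha_A = \beta_A$. This takes care of the object part of $F'$ and defines the components of the would-be natural transformation $\alpha$.

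Next, for an arrow $f : A \longrightarrow A'$ in $\mathcal{A}$, I define
\begin{equation*}
F'f \;=\; \alpha_{A'}^{-1} \circ Ff \circ \alpha_A \;:\; F'A \longrightarrow F'A'.
\end{equation*}
Applying $G$ gives $GF'f = \beta_{A'}^{-1} \circ GFf \circ \beta_A$, which equals $Hf$ by naturality of $\beta$. Functoriality of $F'$ is immediate from the formula, and naturality of $\alpha : F' \Rightarrow F$ is built into the definition: the square $\alpha_{A'} \circ F'f = Ff \circ \alpha_A$ collapses to a tautology. By construction $G\alpha_A = \beta_A$ for every $A$, so $G\alpha = \beta$.

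There is no real obstacle here once one recognizes that the fibration condition (Definition \ref{dfn1}(2), applied locally) is exactly the isofibration property of groupoids, and that $\mathcal{A}$ being a groupoid (so every $\alpha_A$ is invertible) makes the formula for $F'f$ well-defined. The only thing to watch is the order of composition and the direction of the lift, which are dictated by wanting $\alpha_A$ to \emph{end} at $FA$; since we are working with groupoids the fibration property lifts arrows of $\mathcal{C}$ with prescribed codomain in $\mathcal{B}$ without any further work.
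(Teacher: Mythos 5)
Your proposal is correct and follows essentially the same route as the paper: lift each component $\beta_A$ along the fibration $G$ to obtain $F'A$ and $\alpha_A$ with prescribed codomain $FA$, then conjugate $Ff$ by the $\alpha$'s to define $F'$ on arrows. You in fact supply slightly more detail than the paper (the verification that $GF'f = Hf$ via naturality of $\beta$), which the paper leaves implicit.
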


\begin{proof}
For every object $A$ of $\mathcal{A}$, there exists an object $B_{A}$ of $\mathcal{B}$ and an arrow $\alpha_{A} : B_{A} \longrightarrow FA$ such that $GB_{A} = HA$ and $G \alpha_{A} = \beta_{A}$, since $G$ is a fibration. Define $F' A = B_{A}$ and $F (f : A \longrightarrow A' ) = \alpha_{A'}^{-1} \circ Ff \circ \alpha_{A}$.
\end{proof}

\begin{lem} \label{lem5}
Given a square of categories which commutes up to a natural isomorphism $\alpha : H G \Longrightarrow K G$
\begin{equation*}
\begin{tikzcd}[row sep=huge, column sep=huge]
\mathcal{A} \arrow[r, "G"] \arrow[d, swap, "G"] & \mathcal{B} \arrow[d, "K"] \\
\mathcal{B} \arrow[r, swap, "H"] \arrow[ru, Rightarrow, shorten >=20pt, shorten <=20pt, "\alpha"] & \mathcal{C}
\end{tikzcd}
\qquad = \qquad
\begin{tikzcd} [row sep=huge, column sep=huge]
\mathcal{A} \arrow[r, "G"] & \mathcal{B} \arrow[r, bend right=40, swap, "H"{name=H}] \arrow[r, bend left=40, "K"{name=K}] & \mathcal{C} 
\arrow[from=H, to=K, dashed, Rightarrow, shorten >=5pt, shorten <=5pt, "\exists ! \beta"]
\end{tikzcd}
\end{equation*}
in which $G$ is an equivalence of categories, there exists a unique natural isomorphism $\beta : H \Longrightarrow K$ such that $\beta G = \alpha$.
\end{lem}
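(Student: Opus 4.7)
The statement is the exact dual of Lemma \ref{lem6}: there the equivalence was on the domain side of the composite, here it is on the codomain side, so I would mirror the strategy, using a quasi-inverse of $G$ rather than of $F$. Concretely, since $G$ is an equivalence of categories, pick an adjoint equivalence structure: a functor $G' : \mathcal{B} \longrightarrow \mathcal{A}$ together with natural isomorphisms $\eta : \mathrm{id}_{\mathcal{A}} \Longrightarrow G'G$ and $\epsilon : GG' \Longrightarrow \mathrm{id}_{\mathcal{B}}$ satisfying the triangle identities (any equivalence can be upgraded in this way).

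The definition of $\beta$ is forced by naturality, which simultaneously gives uniqueness. For any $B \in \mathcal{B}$, the square
\begin{equation*}
\begin{tikzcd}[row sep=large, column sep=large]
HGG'B \arrow[r, "\beta_{GG'B}"] \arrow[d, swap, "H\epsilon_B"] & KGG'B \arrow[d, "K\epsilon_B"] \\
HB \arrow[r, swap, "\beta_B"] & KB
\end{tikzcd}
\end{equation*}
must commute, and $\beta_{GG'B} = \alpha_{G'B}$ is prescribed by the requirement $\beta G = \alpha$. Thus I am compelled to define
\begin{equation*}
\beta_B \; := \; K\epsilon_B \,\circ\, \alpha_{G'B} \,\circ\, (H\epsilon_B)^{-1}.
\end{equation*}

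The verifications are then routine. Naturality of $\beta$ in $B$ follows by pasting naturality of $\epsilon$ (applied to an arrow $f : B \longrightarrow B'$) and naturality of $\alpha$ (applied to $G'f : G'B \longrightarrow G'B'$). To check $\beta G = \alpha$, I evaluate $\beta_{GA}$: using naturality of $\alpha$ with respect to $\eta_A : A \longrightarrow G'GA$ one gets $\alpha_{G'GA} = KG\eta_A \circ \alpha_A \circ (HG\eta_A)^{-1}$, and then the triangle identity $\epsilon_{GA} \circ G\eta_A = \mathrm{id}_{GA}$ collapses the composite to $\alpha_A$.

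The only place that requires a little care is the passage from a bare equivalence $G$ to an adjoint equivalence, so that the triangle identity is at one's disposal in the $\beta G = \alpha$ step; this is a standard maneuver. Uniqueness is free: it is exactly the forcing argument that motivated the definition of $\beta$ in the first place.
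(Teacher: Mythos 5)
Your proposal is correct and follows essentially the same route as the paper: the definition of $\beta$ is forced by its own naturality against the (co)unit of a quasi-inverse of $G$ together with the requirement $\beta G = \alpha$, which gives uniqueness, and existence is then a routine check. The only cosmetic difference is in the verification of $\beta G = \alpha$: you upgrade to an adjoint equivalence and invoke a triangle identity, whereas the paper stays with a bare quasi-inverse and instead uses full faithfulness of $G$ to write $\eta_{GA}$ as $G$ applied to an arrow, so that naturality of $\alpha$ can be applied directly.
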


\begin{proof}
By hypothesis, there exists a functor $F : \mathcal{B} \longrightarrow \mathcal{A}$ and a natural isomorphism $\eta : \mathrm{id} \Longrightarrow GF$. For every $B$ in $\mathcal{B}$, the square
\begin{equation*}
\begin{tikzcd}[row sep=huge, column sep=huge]
H B \arrow[r, "\beta_{B}"] \arrow[d, swap, "H \eta_{B}"] & K C \arrow[d, "K \eta_{B}"] \\
HGFB \arrow[r, swap, "\beta_{GFB}"] & KGFB
\end{tikzcd}
\end{equation*}
must commute by naturality of $\beta$. Since $\beta_{GFB} = \alpha_{FB}$ is required as well, this leaves the composite
\begin{equation*}
H \xRightarrow{H \eta} H G F \xRightarrow{\alpha F} K G F \xRightarrow{(K \eta)^{-1}} K
\end{equation*}
as the only possible candidate for $\beta$. We see that the square
\begin{equation*}
\begin{tikzcd}[row sep=huge, column sep=huge]
H GA \arrow[r, "\alpha_{A}"] \arrow[d, swap, "H \eta_{GA}"] & K GA \arrow[d, "K \eta_{GA}"] \\
H GFGA \arrow[r, swap, "\alpha_{FGA}"] & K GFGA
\end{tikzcd}
\end{equation*}
commutes by naturality of $\alpha$, as $H \eta_{GA} = H G G^{-1} \eta_{GA}$ and $K \eta_{GA} = K G G^{-1} \eta_{GA}$. This shows that our definition of $\beta$ indeed meets the requirement $\beta G = \alpha$. 
\end{proof}

\begin{lem} \label{muinvlem}
In any diagram of categories
\begin{equation*}
\begin{tikzcd} [row sep=huge, column sep=huge]
\mathcal{A} \arrow[r, bend right=40, swap, "G"{name=G}] \arrow[r, bend left=40, "F"{name=F}] & \mathcal{B} \arrow[r, bend right=40, swap, "K"{name=K}] \arrow[r, bend left=40, "H"{name=H}] & \mathcal{C} 
\arrow[from=H, to=K, shift left=2, Rightarrow, shorten >=5pt, shorten <=5pt, "\beta"]
\arrow[from=H, to=K, swap, shift right=2, Rightarrow, shorten >=5pt, shorten <=5pt, "\alpha"]
\arrow[from=F, to=G, swap, Rightarrow, shorten >=5pt, shorten <=5pt, "\mu"]
\end{tikzcd}
\end{equation*}
with natural transformations $\alpha, \beta : H \Longrightarrow K$ and a natural isomorphism $\mu : F \Longrightarrow G$, the equality $\alpha F = \beta F$ holds if and only if the equality $\alpha G = \beta G$ holds.
\end{lem}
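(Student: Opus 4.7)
The plan is to exploit naturality of $\alpha$ and $\beta$ at the isomorphism components $\mu_A$, and then cancel using the fact that functors preserve isomorphisms.

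First I would fix an object $A$ of $\mathcal{A}$ and write down the naturality squares of $\alpha$ and $\beta$ at the arrow $\mu_A : FA \to GA$. These give the identities
\begin{equation*}
\alpha_{GA} \circ H\mu_A = K\mu_A \circ \alpha_{FA}, \qquad \beta_{GA} \circ H\mu_A = K\mu_A \circ \beta_{FA}.
\end{equation*}
Since $\mu$ is a natural isomorphism, each $\mu_A$ is an isomorphism in $\mathcal{B}$, hence $H\mu_A$ and $K\mu_A$ are isomorphisms in $\mathcal{C}$, and in particular both epi and mono.

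For the forward direction, I would assume $\alpha F = \beta F$, i.e., $\alpha_{FA} = \beta_{FA}$ for every $A$. Post-composing with $K\mu_A$ and applying the two naturality identities yields $\alpha_{GA} \circ H\mu_A = \beta_{GA} \circ H\mu_A$, and then right-cancelling the epimorphism $H\mu_A$ gives $\alpha_{GA} = \beta_{GA}$. The reverse direction is entirely symmetric: assume $\alpha_{GA} = \beta_{GA}$, pre-compose with $H\mu_A$, use the naturality identities to obtain $K\mu_A \circ \alpha_{FA} = K\mu_A \circ \beta_{FA}$, and left-cancel the monomorphism $K\mu_A$.

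There is no real obstacle here; the whole point of the lemma is that invertibility of $\mu$ lets one transfer equalities between $F$-components and $G$-components of parallel natural transformations. The only thing to be careful about is that the argument uses $\mu$ being a \emph{natural isomorphism} (not merely a natural transformation), since cancellation fails without invertibility.
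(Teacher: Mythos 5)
Your proof is correct and is essentially the paper's own argument: the paper likewise invokes the two naturality identities $K\mu \circ \alpha F = \alpha G \circ H\mu$ and $K\mu \circ \beta F = \beta G \circ H\mu$ together with the invertibility of $\mu$. You have merely written out the componentwise cancellation that the paper leaves implicit.
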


\begin{proof}
This follows from the equations
\begin{equation*}
K \mu \circ \alpha F = \alpha G \circ H \mu \qquad \text{and} \qquad K \mu \circ \beta F = \beta G \circ H \mu
\end{equation*}
and the fact that $\mu$ is invertible.
\end{proof}

\begin{cor} \label{cor1}
Let $(F, \phi) : \mathcal{A} \longrightarrow \mathcal{B}$ be an incoherent morphism between (possibly incoherent) bigroupoids. Suppose furthermore that for every pair of 0-cells $A$, $A'$ of $\mathcal{A}$, two endofunctors $G_{A, A'}, H_{A, A'} : \mathcal{A}(A, A') \longrightarrow \mathcal{A}(A, A')$ are given which are naturally isomorphic $\mu_{A, A'} : G_{A, A'} \Longrightarrow H_{A, A'}$. Then the diagrams (\ref{coh4}) and (\ref{coh5}) commute for $\phi G$ if and only if they commute for $\phi H$.
\end{cor}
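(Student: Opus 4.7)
The plan is to reduce the corollary to a direct application of Lemma \ref{muinvlem}. First, I would observe that the commutativity of each diagram appearing in (\ref{coh4}) and (\ref{coh5}) for $\phi$ can be read, in its free 1-cell variables, as an equation between two natural transformations $\alpha, \beta$ between parallel functors of the form
\[
\mathcal{A}(A_1, A_1') \times \cdots \times \mathcal{A}(A_k, A_k') \longrightarrow \mathcal{B}(FA, FA'),
\]
where $k = 3$ for (\ref{coh4}) and $k \in \{1, 2\}$ for the various diagrams in (\ref{coh5}). The transformations $\alpha, \beta$ are built from (components of) $\phi$ together with the coherence isomorphisms $\mathbf{a}, \mathbf{l}, \mathbf{r}, \mathbf{e}, \mathbf{i}$ of $\mathcal{A}$ and $\mathcal{B}$; all of these are themselves natural, so $\alpha$ and $\beta$ really are natural transformations of the stated functors.

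Next, I would interpret the phrase ``the diagrams commute for $\phi G$'' as the assertion that these equations hold after whiskering the source with the product endofunctor $G_{A_1, A_1'} \times \cdots \times G_{A_k, A_k'}$; in concrete terms this just says the diagram commutes at every tuple $(Gh_1, \ldots, Gh_k)$. The analogous statement holds for $\phi H$.

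The key step is then to assemble the given family $\{\mu_{A_i, A_i'}\}$ into a single natural isomorphism
\[
\mu \times \cdots \times \mu : G \times \cdots \times G \Longrightarrow H \times \cdots \times H
\]
of endofunctors on the product $\mathcal{A}(A_1, A_1') \times \cdots \times \mathcal{A}(A_k, A_k')$. With this in hand, Lemma \ref{muinvlem} applied to $\alpha$ and $\beta$ immediately yields $\alpha(G \times \cdots \times G) = \beta(G \times \cdots \times G)$ if and only if $\alpha(H \times \cdots \times H) = \beta(H \times \cdots \times H)$, which is precisely the claim.

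I do not anticipate any substantial obstacle: the argument is essentially bookkeeping around Lemma \ref{muinvlem} in a multivariable setting. The only mild care needed is to formalize ``commutes for $\phi G$'' as an equality of whiskered natural transformations, which is immediate once one notes that products of natural isomorphisms are again natural isomorphisms.
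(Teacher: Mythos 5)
Your proposal is correct and follows the same route as the paper, which simply observes that the corollary is a direct application of Lemma \ref{muinvlem}; your elaboration (reading each diagram in (\ref{coh4}) and (\ref{coh5}) as an equality of natural transformations between parallel functors out of a product of hom-groupoids, and whiskering with $G \times \cdots \times G$ versus $H \times \cdots \times H$ via the product isomorphism $\mu \times \cdots \times \mu$) is exactly the bookkeeping the paper leaves implicit.
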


\begin{proof}
This is a direct application of Lemma \ref{muinvlem}.
\end{proof}

\begin{lem} \label{surlift}
Given a commutative square
\begin{equation} \label{lift3}
\begin{tikzcd}[row sep=huge, column sep=huge]
\mathcal{A} \arrow[r, "{(F, \phi)}"] \arrow[d, swap, "{(K, \kappa)}"] & \mathcal{B} \arrow[d, "{(G, \gamma)}"] \\
\mathcal{D} \arrow[r, swap, "{(H, \eta)}"] \arrow[ru, dashed, "{\exists (L, \lambda)}"] & \mathcal{C}
\end{tikzcd}
\end{equation}
in which $K$ is a trivial cofibration which is surjective on 0-cells and $G$ is a fibration, there exists a diagonal filler $L$, as indicated in the diagram.
\end{lem}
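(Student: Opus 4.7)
The plan is to construct $(L, \lambda)$ level by level: first $L$ on 0-cells (forced by bijectivity of $K$), then on hom-groupoids (by a local groupoid lift), and finally the compositors by extending from the image of $K$ via the uniqueness clause of Lemma \ref{lem5}. Since $K$ is injective on 0-cells (being a cofibration) and surjective on 0-cells (by hypothesis), every 0-cell $D$ of $\mathcal{D}$ has a unique $A \in \mathcal{A}_0$ with $KA = D$, and we are forced to set $LD := FA$. This makes $L \circ K = F$ and $G \circ L = H$ hold strictly on 0-cells. Locally, $K_{A,A'}$ is a trivial cofibration of groupoids (injective on objects and a categorical equivalence), while $G_{FA,FA'}$ is an iso-fibration of groupoids, so the canonical model structure on groupoids furnishes a diagonal filler $L_{D,D'}$ making both local triangles commute strictly; this gives $L \circ K = F$ and $G \circ L = H$ as strict equalities of functors on each hom-groupoid.

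To construct the compositor $\lambda$, observe that the equation $(L, \lambda) \circ (K, \kappa) = (F, \phi)$ forces $\lambda (K \times K) = \phi \circ (L \kappa)^{-1}$. Since $K_{A,A'}$ is an equivalence of categories, so is $K \times K$, and Lemma \ref{lem5} supplies a unique natural isomorphism $\lambda : \mathbf{C}(L \times L) \Rightarrow L \mathbf{C}$ whose whiskering by $K \times K$ equals this. The compositor $\lambda^I$ is obtained analogously by applying Lemma \ref{lem5} to the equivalence $K_{A,A'}$. For the unit compositor $\lambda^U$, the bijection of $K$ on 0-cells already determines $\lambda^U_{KA} := \phi^U_A \circ L(\kappa^U_A)^{-1}$ on every 0-cell of $\mathcal{D}$, with no extension needed.

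Two things then remain to verify. First, the equation $(G, \gamma) \circ (L, \lambda) = (H, \eta)$ reduces to $G\lambda \circ \gamma L = \eta$; whiskering both sides by $K \times K$ turns this into the assumed compositor equality for $(G, \gamma) \circ (F, \phi) = (H, \eta) \circ (K, \kappa)$, so by the uniqueness clause of Lemma \ref{lem5} the full equality follows (and similarly for $\lambda^U$, $\lambda^I$). Second, the coherence diagrams (\ref{coh4}) and (\ref{coh5}) for $\lambda$: Lemma \ref{lem1} applied to the triangle $L \circ K = F$ (using that $\phi$ is coherent, and that post-composing $\kappa$ with the functor $L$ preserves coherence) shows that these diagrams commute on triples of 1-cells in the image of $K$; since $K$ is locally essentially surjective and both sides of each coherence equation are natural in the input 1-cells, a standard naturality argument propagates the equalities to all of $\mathcal{D}$. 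The main obstacle is precisely that $G$ is not locally faithful here, so unlike the cofibration / trivial fibration lifting argument one cannot simply invert $G$ to define $\lambda$; instead the construction relies throughout on the fact that $K$ is locally an equivalence, using Lemma \ref{lem5} to pin $\lambda$ down from its values on the image of $K$.
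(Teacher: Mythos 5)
Your proposal is correct and follows essentially the same route as the paper: $L$ on 0-cells is forced by bijectivity of $K$, the local functors come from the groupoid model structure, $\lambda$ is produced and the lower-right triangle verified via the existence and uniqueness clauses of Lemma \ref{lem5}, and coherence is transferred from $\lambda K$ to $\lambda$ using local essential surjectivity of $K$. The paper formalizes your final ``naturality propagation'' step as Corollary \ref{cor1} (via a pseudo-inverse $T$ of $K$ and the isomorphism $\mathrm{id} \Rightarrow KT$), but the content is the same.
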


\begin{proof}
Let $L : \mathcal{D}_{0} \longrightarrow \mathcal{B}_{0}$ to be the unique function that makes the diagram
\begin{equation*}
\begin{tikzcd}[row sep=huge, column sep=huge]
\mathcal{A}_{0} \arrow[r, "F"] \arrow[d, swap, "K"] & \mathcal{B}_{0} \arrow[d, "G"] \\
\mathcal{D}_{0} \arrow[r, swap, "H"] \arrow[ru, dashed, "\exists ! L"] & \mathcal{C}_{0}
\end{tikzcd}
\end{equation*}
commute. This function exists because $K : \mathcal{A}_{0} \longrightarrow \mathcal{D}_{0}$ is bijective.

Given two 0-cells $D = KA$ and $D'= KA'$ in $\mathcal{D}$, we construct the functor
\begin{equation*}
L( = L_{D, D'}) : \mathcal{D}(D, D') \longrightarrow \mathcal{B}(LD, LD')
\end{equation*}
by taking a diagonal
\begin{equation*}
\begin{tikzcd}[row sep=huge, column sep=huge]
\mathcal{A}(A, A') \arrow[r, "F"] \arrow[d, swap, "K"] & \mathcal{B}(LD, LD') \arrow[d, "G"] \\
\mathcal{D}(D, D') \arrow[r, swap, "H"] \arrow[ru, dashed, "\exists L"] & \mathcal{C}(HD, HD')
\end{tikzcd}
\end{equation*}
which exists by the model structure on the category of groupoids.

To define $\lambda$, consider the square
\begin{equation} \label{lamdef}
\begin{tikzcd}[row sep=huge, column sep=huge]
\mathcal{A}(A', A'') \times \mathcal{A}(A, A') \arrow[r, "K \times K"] \arrow[d, swap, "K \times K"] & \mathcal{D}(D', D'') \times \mathcal{D}(D, D') \arrow[d, "L \circ \mathbf{C}"] \\
\mathcal{D}(D', D'') \times \mathcal{D}(D, D') \arrow[r, swap, "\mathbf{C} \circ (L \times L)"] \arrow[ru, Rightarrow, shorten >=40pt, shorten <=40pt, "(L \kappa)^{-1} \circ \phi"] & \mathcal{B}(LA, LA'')
\end{tikzcd}
\end{equation}
The calculation
\begin{equation*}
\mathbf{C} \circ (L \times L) \circ (K \times K) = \mathbf{C} \circ (F \times F) \xRightarrow{\; \phi \;} F \circ \mathbf{C} = L \circ K \circ \mathbf{C} \xRightarrow{(L \kappa)^{-1}} L \circ \mathbf{C} \circ (K \times K)
\end{equation*}
shows that (\ref{lamdef}) indeed commutes up to the natural isomorphism $(L \kappa)^{-1} \circ \phi$. Since $K \times K$ in (\ref{lamdef}) is an equivalence of categories, Lemma \ref{lem5} provides us with a natural isomorphism
\begin{equation*}
\lambda (= \lambda_{D, D', D''}) : \mathbf{C} \circ (L \times L) \Longrightarrow L \circ \mathbf{C}
\end{equation*}
satisfying $\lambda K = (L \kappa)^{-1} \circ \phi$.

We make the necessary verifications. The left upper triangle of (\ref{lift3}) commutes, since
\begin{equation*}
(L, \lambda) \circ (K, \kappa) = (L \circ K, L \kappa \circ \lambda K) = (F, \phi),
\end{equation*}
as $\lambda K = (L \kappa)^{-1} \circ \phi$. We can also compute
\begin{equation*}
(G \lambda \circ \gamma L)K = G \lambda K \circ \gamma LK = G ( (L \kappa)^{-1} \circ \phi) \circ \gamma F = (H \kappa)^{-1} \circ G \phi \circ \gamma F = \eta K,
\end{equation*}
using $\lambda K = (L \kappa)^{-1} \circ \phi$ as well as the commutativity of the square (\ref{lift3}). Hence
\begin{equation*}
(G, \gamma) \circ (L, \lambda) = (G \circ L, G \lambda \circ \gamma L) = (H, \eta)
\end{equation*}
by the uniqueness requirement of Lemma \ref{lem5}, so the lower right triangle of (\ref{lift3}) commutes as well.

Lastly, we check that the coherence diagrams (\ref{coh4}) and (\ref{coh5}) commute for $\lambda$. Note that for each pair of 0-cells $D$, $D'$ of $\mathcal{D}$, there exists a functor 
\begin{equation*}
T_{D, D'} : \mathcal{D}(D, D') \longrightarrow \mathcal{A}(A, A')
\end{equation*}
and a natural isomorphism
\begin{equation*}
\alpha_{D, D'} : \mathrm{id} \Longrightarrow K_{A, A'} \circ T_{D, D'},
\end{equation*}
as each $K_{A, A'}$ is an equivalence of categories. Since $(L, \lambda) \circ (K, \kappa) = (F, \phi)$, it follows that the diagrams (\ref{coh4}) and (\ref{coh5}) commute for $\lambda K$, by $\textbf{(2)} + \textbf{(3)} \Longrightarrow \textbf{(1)}$ of Lemma \ref{lem1}. In particular, they commute for $\lambda K T$. But then they commute for $\lambda$ by Corollary \ref{cor1}.
\end{proof}

\begin{lem} \label{isolift}
Given a commutative square
\begin{equation}\label{eq3}
\begin{tikzcd}[row sep=huge, column sep=huge]
\mathcal{A} \arrow[r, "{(F, \phi)}"] \arrow[d, swap, "{(K, \mathrm{id})}"] & \mathcal{B} \arrow[d, "{(G, \mathrm{id})}"] \\
\mathcal{C} \arrow[r, swap, "\mathrm{id}"] \arrow[ru, dashed, "{\exists (L, \lambda)}"] & \mathcal{C}
\end{tikzcd}
\end{equation}
in which $K$ is a strict trivial cofibration, which is also a local isomorphism and $G$ is a strict fibration, there exists a diagonal filler $L$, as indicated in the diagram.
\end{lem}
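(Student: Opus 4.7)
The plan is to exploit that $K$ is a local isomorphism by defining $L = F \circ K^{-1}$ on the part of $\mathcal{C}$ coming from $\mathcal{A}$, and to extend to the remainder of $\mathcal{C}$ using the essential surjectivity of $K$ together with the isofibration property of $G$. Unpacking the composition rules with $K, G$ strict and the bottom arrow being $\mathrm{id}$, the requirements on $(L, \lambda)$ are $L K = F$, $\lambda K = \phi$ (upper triangle) and $G L = \mathrm{id}$, $G \lambda = \mathrm{id}$ (lower triangle), all strictly. The strictness of the lower triangle makes the construction delicate: a naive conjugation-based definition of $L$ only satisfies $GL \cong \mathrm{id}$ up to coherence 2-cells, so a strictification step is necessary.

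I would first define $L$ on 0-cells by setting $L(KA) = FA$ (well-defined by injectivity of $K$ on 0-cells); for each $C \notin \mathrm{Im}(K)$, use the weak equivalence condition to pick $A_{C} \in \mathcal{A}_{0}$ and a 1-cell $c_{C} : C \to K A_{C}$, then apply condition (1) of Definition \ref{dfn1} for the fibration $G$ (using $K A_{C} = G F A_{C}$) to lift $c_{C}$ to a 1-cell $b_{C} : LC \to F A_{C}$ with $G L C = C$ and $G b_{C} = c_{C}$; normalize $c_{KA} = 1_{KA}$ and $b_{KA} = 1_{FA}$. On hom-groupoids, for $C = KA$ and $C' = KA'$ define $L_{C, C'} = F_{A, A'} \circ K_{A, A'}^{-1}$, which satisfies $LK = F$ and $GL = \mathrm{id}$ strictly. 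For the remaining pairs I would form the preliminary functor
$$L_{0}(c) = b_{C'}^{*} * \bigl( F K^{-1}(c_{C'} * c * c_{C}^{*}) * b_{C} \bigr),$$
whose composite with $G$ sends $c$ to $c_{C'}^{*} * \bigl( (c_{C'} * c * c_{C}^{*}) * c_{C} \bigr)$; this is naturally isomorphic to the identity via a natural isomorphism $\beta$ assembled from the coherence 2-cells of $\mathcal{C}$. Applying Lemma \ref{lem3} to the resulting triangle against the isofibration $G_{LC, LC'}$ yields an $L_{C, C'}$ with $G L_{C, C'} = \mathrm{id}$ strictly together with a natural isomorphism $\alpha : L_{C, C'} \Rightarrow L_{0}$ satisfying $G \alpha = \beta$.

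The compositor is defined by $\lambda_{(Kg, Kf)} = \phi_{(g, f)}$ on triples in $\mathrm{Im}(K)$ (which makes $\lambda K = \phi$) and, elsewhere, by the transport formula
$$\lambda_{(c_{2}, c_{1})} = \alpha_{c_{2} * c_{1}}^{-1} \circ \lambda_{0, (c_{2}, c_{1})} \circ (\alpha_{c_{2}} * \alpha_{c_{1}}),$$
where $\lambda_{0}$ is the canonical compositor of $L_{0}$ built from $\phi$ together with the coherence 2-cells of $\mathcal{B}$ (including the cancellation $b_{C'} * b_{C'}^{*} \simeq 1$ via $\mathbf{i}$). The upper triangle commutes on the nose, and for the lower triangle, $GL = \mathrm{id}$ is strict by construction, while $G \lambda = \mathrm{id}$ reduces to the identity $G \lambda_{0, (c_{2}, c_{1})} = \beta_{c_{2} * c_{1}} \circ (\beta_{c_{2}} * \beta_{c_{1}})^{-1}$, which holds by the coherence theorem for bigroupoids since both sides are 2-cells in $\mathcal{C}$ between the same 1-cells, built entirely from coherence data.

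The main obstacle is the final verification that $(L, \lambda)$ is a morphism, i.e., that the diagrams (\ref{coh4}) and (\ref{coh5}) commute for $\lambda$ in $\mathcal{B}$. Lemma \ref{lem1} does not apply directly, because $G$ need not be locally faithful; instead, the preliminary $(L_{0}, \lambda_{0})$ should be shown to be a pseudofunctor by construction — its compositor is assembled from $\phi$, which satisfies coherence because $F$ is a morphism, together with coherence 2-cells whose relevant diagrams commute by the coherence theorem — and the coherence of $\lambda$ then follows by transporting the coherence of $\lambda_{0}$ along the natural isomorphism $\alpha$, which is essentially an application of Corollary \ref{cor1}.
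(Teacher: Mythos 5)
Your proposal is correct and follows essentially the same route as the paper: it merely folds the paper's Stages 1 and 2 (the left inverse $(T,\tau)$ of $K$ built from conjugation by the $p_{C}$, followed by the lift along the fibration via the $q_{C}$) into the single preliminary functor $L_{0}$, and then performs the same strictification via Lemma \ref{lem3} and the same coherence-theorem verifications of $G\lambda = \mathrm{id}$ and of (\ref{coh4}), (\ref{coh5}). The only minor imprecision is that the coherence of $\lambda_{0}$ requires the coherence theorem for pseudofunctors (Theorem \ref{phifordiagthm}), not just the one for bigroupoids, exactly as the paper invokes it for $\lambda^{(2)}$.
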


\begin{proof}
We build $(L, \lambda)$ in three stages, each time `correcting' the previous stage. The morphism $(L^{(1)}, \lambda^{(1)})$ will make the upper-left triangle commute. In addition to this, $(L^{(2)}, \lambda^{(2)})$ will make the diagram commute on the level of 0-cells. And  finally $(L^{(3)}, \lambda^{(3)}) = (L, \lambda)$ will make the entire diagram commute.

\textbf{Stage 1.} We construct a left inverse $(T, \tau) : \mathcal{C} \longrightarrow \mathcal{A}$ of $K$. Since $K$ is a trivial cofibration, there exists a function $T : \mathcal{C}_{0} \longrightarrow \mathcal{A}_{0}$ such that $TK = \mathrm{id}$ and for every 0-cell $C$ of $\mathcal{C}$, there exists a 1-cell $p_{C} : C \longrightarrow KTC$. Whenever $KTC = C$, we choose $p_{C} = 1_{C}$. We define members $P_{C, C'}$ of a $\mathcal{C}_{0} \times \mathcal{C}_{0}$-indexed family of functors by:
\begin{itemize}
\item{
$\begin{tikzcd}[column sep=huge]
\mathcal{C}(C, C') \arrow[r, "p_{C'} * ( - * p_{C}^{*})"]  & \mathcal{C}(KTC, KTC')
\end{tikzcd}$, if at least one of $C$, $C'$ does not lie in the image of $K$;}
\item{$\begin{tikzcd}[column sep=huge]
\mathcal{C}(C, C') \arrow[r, "\mathrm{id}"]  & \mathcal{C}(KTC, KTC')
\end{tikzcd}$, if both $C$ and $C'$ lie in the image of $K$.}
\end{itemize}
We take $T_{C, C'} = K^{-1}_{TC, TC'} \circ P_{C, C'}$. 

The natural isomorphism
\begin{equation*}
\tau (= \tau_{C, C', C''}) : \mathbf{C} \circ (T \times T) \Longrightarrow T \circ \mathbf{C}
\end{equation*}
is given by the diagram
\begin{equation} \label{tau}
\begin{tikzcd}[row sep=huge, column sep=huge]
\mathcal{C}(C', C'') \times \mathcal{C}(C, C') \arrow[r, "\mathbf{C}"] \arrow[d, swap, "P \times P"] & \mathcal{C}(C, C'') \arrow[d, "P"] \\
\mathcal{C}(KTC', KTC'') \times \mathcal{C}(KTC, KTC') \arrow[r, "\mathbf{C}"] \arrow[d, swap, "K^{-1} \times K^{-1}"] \arrow[ru, Rightarrow, shorten >=50pt, shorten <=50pt, "\mathbf{x}"] & \mathcal{C}(KTC, KTC'') \arrow[d, "K^{-1}"] \\
\mathcal{A}(TC', TC'') \times \mathcal{A}(TC, TC') \arrow[r, swap, "\mathbf{C}"] \arrow[ru, Rightarrow, shorten >=47pt, shorten <=47pt, "\mathrm{id}"] & \mathcal{A}(TC, TC'')
\end{tikzcd}
\end{equation}
In (\ref{tau}), $\mathbf{x}( = \mathbf{x}_{C, C', C''})$ is the canonical isomorphism (see Definition \ref{fordiagdef}). The diagrams (\ref{coh4}) and (\ref{coh5}) commute for $\tau$ by Theorem \ref{fordiagthm} since $\mathbf{x}$ is canonical and $K$ is a strict local isomorphism. Define $(L^{(1)}, \lambda^{(1)}) = (F, \phi) \circ (T, \tau)$ and note that $(L^{(1)}, \lambda^{(1)}) \circ (K, \mathrm{id}) = (F, \phi)$, as $(T, \tau) \circ (K, \mathrm{id}) = \mathrm{id}$ by construction.

\textbf{Stage 2.} Since $G$ is a fibration, there exists a function $L^{(2)} : \mathcal{C}_{0} \longrightarrow \mathcal{B}_{0}$ such that $L^{(2)}K = L^{(1)}K$, $G L^{(2)} = \mathrm{id}$ and for every 0-cell $C$ of $\mathcal{C}$, there exists a 1-cell $q_{C} : L^{(2)} C \longrightarrow L^{(1)}C$ satisfying $G q_{C} = p_{C}$. Whenever $KTC = C$, we choose $q_{C} = 1_{L^{(2)}C}$. We define members $Q_{C, C'}$ of a $\mathcal{C}_{0} \times \mathcal{C}_{0}$-indexed family of functors by:
\begin{itemize}
\item{
$\begin{tikzcd}[column sep=huge]
\mathcal{B}(L^{(1)}C, L^{(1)}C') \arrow[r, "q_{C'}^{*} * ( - * q_{C})"]  & \mathcal{B}(L^{(2)}C, L^{(2)}C')
\end{tikzcd}$, if at least one of $C$, $C'$ does not lie in the image of $K$;}
\item{$\begin{tikzcd}[column sep=huge]
\mathcal{B}(L^{(1)}C, L^{(1)}C') \arrow[r, "\mathrm{id}"]  & \mathcal{B}(L^{(2)}C, L^{(2)}C')
\end{tikzcd}$, if both $C$ and $C'$ lie in the image of $K$.}
\end{itemize}
We take $L_{C, C'}^{(2)} = Q_{C, C'} \circ L_{C, C'}^{(1)}$. 

The natural isomorphism
\begin{equation*}
\lambda^{(2)} (= \lambda^{(2)}_{C, C', C''}) : \mathbf{C} \circ (L^{(2)} \times L^{(2)}) \Longrightarrow L^{(2)} \circ \mathbf{C}
\end{equation*}
is given by the diagram
\begin{equation} \label{lam2}
\begin{tikzcd}[row sep=huge, column sep=huge]
\mathcal{C}(C', C'') \times \mathcal{C}(C, C') \arrow[r, "\mathbf{C}"] \arrow[d, swap, "L^{(1)} \times L^{(1)}"] & \mathcal{C}(C, C'') \arrow[d, "L^{(1)}"] \\
\mathcal{B}(L^{(1)}C', L^{(1)}C'') \times \mathcal{B}(L^{(1)}C, L^{(1)}C') \arrow[r, "\mathbf{C}"] \arrow[d, swap, "Q \times Q"] \arrow[ru, Rightarrow, shorten >=45pt, shorten <=45pt, "\lambda^{(1)}"] & \mathcal{B}(L^{(1)}C, L^{(1)}C'') \arrow[d, "Q"] \\
\mathcal{B}(L^{(2)}C', L^{(2)}C'') \times \mathcal{B}(L^{(2)}C, L^{(2)}C') \arrow[r, swap, "\mathbf{C}"] \arrow[ru, Rightarrow, shorten >=44pt, shorten <=44pt, "\mathbf{y}"] & \mathcal{B}(L^{(2)}C, L^{(2)}C'')
\end{tikzcd}
\end{equation}
In (\ref{lam2}), $\mathbf{y}( = \mathbf{y}_{C, C', C''})$ is the canonical isomorphism. By Theorem \ref{phifordiagthm} applied to $(L^{(1)}, \lambda^{(1)})$, the diagrams (\ref{coh4}) and (\ref{coh5}) commute for $\lambda^{(2)}$. Note that $(L^{(2)}, \lambda^{(2)}) \circ (K, \mathrm{id}) = (F, \phi)$, as $(L^{(2)}, \lambda^{(2)}) \circ (K, \mathrm{id}) = (L^{(1)}, \lambda^{(1)}) \circ (K, \mathrm{id})$ by construction.

\textbf{Stage 3.} We now modify $(L^{(2)}, \lambda^{(2)})$ to get the desired morphism $(L, \lambda)$. On the level of 0-cells, we make no changes, meaning that $L = L^{(2)} : \mathcal{C}_{0} \longrightarrow \mathcal{B}_{0}$. The need to modify $(L^{(2)}, \lambda^{(2)})$ arises because the triangle
\begin{equation} \label{eq4}
\begin{tikzcd}
& & \mathcal{B}(LC, LC') \arrow[dd, "G"] \\
& & {}\\
\mathcal{C}(C, C') \arrow[rr, swap, "\mathrm{id}"] \arrow[rruu, "L^{(2)}"] & {} \arrow[ru, Rightarrow, shorten >=20pt, shorten <=20pt, "\mathbf{z}"] & \mathcal{C}(C, C')
\end{tikzcd}
\end{equation}
will in general only commute up to a canonical isomorphism $\mathbf{z} (= \mathbf{z}_{C, C'})$. Indeed, let us define members $R_{C, C'}$ of a $\mathcal{C}_{0} \times \mathcal{C}_{0}$-indexed family of functors by:
\begin{itemize}
\item{
$\begin{tikzcd}[column sep=huge]
\mathcal{C}(KTC, KTC') \arrow[r, "p_{C'}^{*} * ( - * p_{C})"] & \mathcal{C}(C, C') 
\end{tikzcd}$, if at least one of $C$, $C'$ does not lie in the image of $K$;}
\item{$\begin{tikzcd}[column sep=huge]
\mathcal{C}(KTC, KTC') \arrow[r, "\mathrm{id}"]  & \mathcal{C}(C, C')
\end{tikzcd}$, if both $C$ and $C'$ lie in the image of $K$.}
\end{itemize}
Using the relations $G q_{C} = p_{C}$, $G q_{C'} = p_{C'}$ and the strictness of $G$, one easily verifies
\begin{equation} \label{GQ=RG}
G_{L^{(2)} C, L^{(2)} C'} \circ Q_{C, C'} = R_{C, C'} \circ G_{L^{(1)} C, L^{(1)} C'}.
\end{equation}
Then, with $G$ and $L^{(2)}$ as in (\ref{eq4}),
\begin{equation} \label{GL}
G \circ L^{(2)} = G \circ Q \circ L^{(1)} =  G \circ Q \circ F \circ T = G \circ Q \circ F \circ K^{-1} \circ P,
\end{equation}
all by definition. Now using $G \circ Q = R \circ G$ (by (\ref{GQ=RG})) and $G \circ F = K$ (by (\ref{eq3})), we find that (\ref{GL}) is equal to
\begin{equation*}
R \circ G \circ F \circ K^{-1} \circ P = R \circ K \circ K^{-1} \circ P = R \circ P
\end{equation*}
and clearly there exists a canonical isomorphism $\mathbf{z} : \mathrm{id} \Longrightarrow R \circ P$.

If both $C$ and $C'$ lie in the image of $K$, then $\mathbf{z}$ is the identity and we define $L_{C, C'} = L^{(2)}_{C, C'}$ and $\alpha_{C, C'} = \mathrm{id} : L_{C, C'} \Longrightarrow L^{(2)}_{C, C'}$. In all other cases we apply Lemma \ref{lem3} to obtain a functor $L_{C, C'} : \mathcal{C}(C, C') \longrightarrow \mathcal{B}(LC, LC')$ which does make the triangle (\ref{eq4}) commute, together with a natural isomorphism $\alpha_{C, C'} : L_{C, C'} \Longrightarrow L^{(2)}_{C, C'}$ satisfying $G \alpha = \mathbf{z}$. We define $\lambda$ as the natural isomorphism
\begin{equation*}
\begin{tikzcd}[row sep=huge, column sep=huge]
\mathcal{C}(C', C'') \times \mathcal{C}(C, C') \arrow[r, "\mathbf{C}"] \arrow[d, bend right=55, swap, "L \times L"{name=LL}] \arrow[d, bend left=55, "L^{(2)} \times L^{(2)}"{name=LLB}] & \mathcal{C}(C, C'') \arrow[d, swap, bend right=55, "L^{(2)}"{name=LB}] \arrow[d, bend left=55, "L"{name=L}] \\
\mathcal{B}(LC', LC'') \times \mathcal{B}(LC, LC') \arrow[r, swap, "\mathbf{C}"] \arrow[ru, Rightarrow, shorten >=40pt, shorten <=40pt, "\lambda^{(2)}" pos=0.6] & \mathcal{B}(LC, LC'')
\arrow[from=LL, to=LLB, Rightarrow, shorten >=10pt, shorten <=10pt, "\alpha \times \alpha"]
\arrow[from=LB, to=L, Rightarrow, shorten >=10pt, shorten <=10pt, "\alpha^{-1}"]
\end{tikzcd}
\end{equation*}
Note that that this choice of $(L, \lambda)$ gives $(L, \lambda) \circ (K, \mathrm{id}) = (L^{(2)}, \lambda^{(2)}) \circ (K, \mathrm{id}) = (F, \phi)$ and also ensures that the lower right triangle of (\ref{eq3}) commutes on the level of 0-, 1- and 2-cells.

To verify that the coherence diagram (\ref{coh4}) commutes for $\lambda$, consider the following diagram, whose perimeter is exactly (\ref{coh4}):
\begin{equation*}
\begin{tikzcd}[row sep=large, column sep=huge]
\cdot \arrow[rr, "\lambda * \mathrm{id}"] \arrow[ddd, swap, "\mathbf{a}"] \arrow[dr, "(\alpha * \alpha) * \alpha" description] & & \cdot \arrow[rr, "\lambda"] \arrow[d, swap, "\alpha * \alpha" description] & & \cdot \arrow[ddd, "L\mathbf{a}"] \arrow[dl, swap, "\alpha" description] \\
& \cdot \arrow[r, "\lambda^{(2)} * \mathrm{id}"] \arrow[d, swap, "\mathbf{a}"] & \cdot \arrow[r, "\lambda^{(2)}"] & \cdot \arrow[d, "L^{(2)} \mathbf{a}"] & \\
& \cdot \arrow[r, swap, "\mathrm{id} * \lambda^{(2)}"] & \cdot \arrow[r, swap, "\lambda^{(2)}"] & \cdot & \\
\cdot \arrow[rr, swap, "\mathrm{id} * \lambda"] \arrow[ur, swap, "\alpha * (\alpha * \alpha)" description] & & \cdot \arrow[rr, swap, "\lambda"] \arrow[u, "\alpha * \alpha" description] & & \cdot \arrow[ul, "\alpha" description]
\end{tikzcd}
\end{equation*}
The innermost rectangle is simply diagram (\ref{coh4}) for $\lambda^{(2)}$, which commutes because $(L^{(2)}, \lambda^{(2)})$ is a morphism; the leftmost square commutes by naturality of $\mathbf{a}$; the rightmost square commutes by naturality of $\alpha$ and all other `squares' in the diagram commute by definition of $\lambda$.

All that remains to show is that $G \lambda = \mathrm{id}$. Expand the definition of $\lambda$ to get
\begin{equation*}
\begin{tikzcd}[row sep=huge, column sep=huge]
\cdot \arrow[r, "\mathbf{C}"] \arrow[d, swap, "L \times L"] & \cdot \arrow[d, "L"] \\
\cdot \arrow[r, "\mathbf{C}"] \arrow[d, swap, "G \times G"] \arrow[ru, Rightarrow, shorten >=20pt, shorten <=20pt, "\lambda"] & \cdot \arrow[d, "G"] \\
\cdot \arrow[r, swap, "\mathbf{C}"] \arrow[ru, Rightarrow, shorten >=20pt, shorten <=20pt, "\mathrm{id}"] & \cdot
\end{tikzcd}
\qquad = \qquad
\begin{tikzcd}[row sep=huge, column sep=huge]
\cdot \arrow[r, "\mathbf{C}"] \arrow[d, bend right=55, swap, "L \times L"{name=LL}] \arrow[d, bend left=55, "{}"{name=LLB}] & \cdot \arrow[d, swap, bend right=55, "{}"{name=LB}] \arrow[d, bend left=55, "L"{name=L}] \\
\cdot \arrow[d, swap, "G \times G"] \arrow[r, "\mathbf{C}"] \arrow[ru, Rightarrow, shorten >=20pt, shorten <=20pt, "\lambda^{(2)}" pos=0.6] & \cdot \arrow[d, "G"] \\
\cdot \arrow[r, swap, "\mathbf{C}"] \arrow[ru, Rightarrow, shorten >=20pt, shorten <=20pt, "\mathrm{id}"] & \cdot
\arrow[from=LL, to=LLB, Rightarrow, shorten >=10pt, shorten <=10pt, "\alpha \times \alpha"]
\arrow[from=LB, to=L, Rightarrow, shorten >=10pt, shorten <=10pt, "\alpha^{-1}"]
\end{tikzcd}
\end{equation*}
Since $G \alpha = \mathbf{z}$, this is the same as
\begin{equation}\label{eq1}
\begin{tikzcd}[row sep=huge, column sep=huge]
\cdot \arrow[r, "\mathbf{C}"] \arrow[d] \arrow[dd, bend right=100, swap, "\mathrm{id}"{name=A}] & \cdot \arrow[d] \arrow[dd, bend left=100, "\mathrm{id}"{name=B}] \\
\cdot \arrow[r, "\mathbf{C}"] \arrow[d] \arrow[ru, Rightarrow, shorten >=20pt, shorten <=20pt, "\lambda^{(2)}"] & \cdot \arrow[d] \\
\cdot \arrow[r, swap, "\mathbf{C}"] \arrow[ru, Rightarrow, shorten >=20pt, shorten <=20pt, "\mathrm{id}"] & \cdot
\arrow[from=A, to=2-1, Rightarrow, shorten >=8pt, shorten <=8pt, "\mathbf{z} \times \mathbf{z}"] \arrow[from=2-2, to=B, Rightarrow, shorten >=8pt, shorten <=8pt, "\mathbf{z}^{-1}"]
\end{tikzcd}
\end{equation}
Now consider the two cental squares of (\ref{eq1}):
\begin{equation} \label{botsq}
\begin{tikzcd}[row sep=huge, column sep=huge]
\cdot \arrow[r, "\mathbf{C}"] \arrow[d, swap, "L^{(2)} \times L^{(2)}"] & \cdot \arrow[d, "L^{(2)}"] \\
\cdot \arrow[r, "\mathbf{C}"] \arrow[d, swap, "G \times G"] \arrow[ru, Rightarrow, shorten >=20pt, shorten <=20pt, "\lambda^{(2)}"] & \cdot \arrow[d, "G"] \\
\cdot \arrow[r, swap, "\mathbf{C}"] \arrow[ru, Rightarrow, shorten >=20pt, shorten <=20pt, "\mathrm{id}"] & \cdot
\end{tikzcd}
\qquad = \qquad
\begin{tikzcd}[row sep=huge, column sep=huge]
\cdot \arrow[r, "\mathbf{C}"] \arrow[d, swap, "L^{(1)} \times L^{(1)}"] & \cdot \arrow[d, "L^{(1)}"] \\
\cdot \arrow[r, "\mathbf{C}"] \arrow[d, swap, "Q \times Q"] \arrow[ru, Rightarrow, shorten >=20pt, shorten <=20pt, "\lambda^{(1)}"] & \cdot \arrow[d, "Q"] \\
\cdot \arrow[r, "\mathbf{C}"] \arrow[d, swap, "G \times G"] \arrow[ru, Rightarrow, shorten >=20pt, shorten <=20pt, "\mathbf{y}"] & \cdot \arrow[d, "G"] \\
\cdot \arrow[r, swap, "\mathbf{C}"] \arrow[ru, Rightarrow, shorten >=20pt, shorten <=20pt, "\mathrm{id}"] & \cdot
\end{tikzcd}
\qquad = \qquad
\begin{tikzcd}[row sep=huge, column sep=huge]
\cdot \arrow[r, "\mathbf{C}"] \arrow[d, swap, "L^{(1)} \times L^{(1)}"] & \cdot \arrow[d, "L^{(1)}"] \\
\cdot \arrow[r, "\mathbf{C}"] \arrow[d, swap, "G \times G"] \arrow[ru, Rightarrow, shorten >=20pt, shorten <=20pt, "\lambda^{(1)}"] & \cdot \arrow[d, "G"] \\
\cdot \arrow[r, "\mathbf{C}"] \arrow[d, swap, "R \times R"] \arrow[ru, Rightarrow, shorten >=20pt, shorten <=20pt, "\mathrm{id}"] & \cdot \arrow[d, "R"] \\
\cdot \arrow[r, swap, "\mathbf{C}"] \arrow[ru, Rightarrow, shorten >=20pt, shorten <=20pt, "\mathbf{w}"] & \cdot
\end{tikzcd}
\end{equation}
The first and second diagrams of (\ref{botsq}) are equal by definition of $(L^{(2)}, \lambda^{(2)})$. In the third diagram, $\mathbf{w}$ is the canonical isomorphism. The bottom two squares in the second diagram of (\ref{botsq}) and the bottom two squares in the third diagram of (\ref{botsq}) both represent a canonical isomorphism, so they must be equal. Using the definition of $(L^{(1)}, \lambda^{(1)})$ and applying $(G, \mathrm{id}) \circ (F, \phi) = (K, \mathrm{id})$, we find that (\ref{botsq}) is equal to
\begin{equation}\label{eq2}
\begin{tikzcd}[row sep=huge, column sep=huge]
\cdot \arrow[r, "\mathbf{C}"] \arrow[d, swap, "P \times P"] & \cdot \arrow[d, "P"] \\
\cdot \arrow[r, "\mathbf{C}"] \arrow[d, swap, "K^{-1} \times K^{-1}"] \arrow[ru, Rightarrow, shorten >=20pt, shorten <=20pt, "\mathbf{x}"] & \cdot \arrow[d, "K^{-1}"] \\
\cdot \arrow[r, "\mathbf{C}"] \arrow[d, swap, "F \times F"] \arrow[ru, Rightarrow, shorten >=20pt, shorten <=20pt, "\mathrm{id}"] & \cdot \arrow[d, "F"] \\
\cdot \arrow[r, "\mathbf{C}"] \arrow[d, swap, "G \times G"] \arrow[ru, Rightarrow, shorten >=20pt, shorten <=20pt, "\phi"] & \cdot \arrow[d, "G"] \\
\cdot \arrow[r, "\mathbf{C}"] \arrow[d, swap, "R \times R"] \arrow[ru, Rightarrow, shorten >=20pt, shorten <=20pt, "\mathrm{id}"] & \cdot \arrow[d, "R"] \\
\cdot \arrow[r, swap, "\mathbf{C}"] \arrow[ru, Rightarrow, shorten >=20pt, shorten <=20pt, "\mathbf{w}"] & \cdot
\end{tikzcd}
\qquad = \qquad
\begin{tikzcd}[row sep=huge, column sep=huge]
\cdot \arrow[r, "\mathbf{C}"] \arrow[d, swap, "P \times P"] & \cdot \arrow[d, "P"] \\
\cdot \arrow[r, "\mathbf{C}"] \arrow[d, swap, "K^{-1} \times K^{-1}"] \arrow[ru, Rightarrow, shorten >=20pt, shorten <=20pt, "\mathbf{x}"] & \cdot \arrow[d, "K^{-1}"] \\
\cdot \arrow[r, "\mathbf{C}"] \arrow[d, swap, "K \times K"] \arrow[ru, Rightarrow, shorten >=20pt, shorten <=20pt, "\mathrm{id}"] & \cdot \arrow[d, "K"] \\
\cdot \arrow[r, "\mathbf{C}"] \arrow[d, swap, "R \times R"] \arrow[ru, Rightarrow, shorten >=20pt, shorten <=20pt, "\mathrm{id}"] & \cdot \arrow[d, "R"] \\
\cdot \arrow[r, swap, "\mathbf{C}"] \arrow[ru, Rightarrow, shorten >=20pt, shorten <=20pt, "\mathbf{w}"] & \cdot
\end{tikzcd}
\qquad = \qquad
\begin{tikzcd}[row sep=huge, column sep=huge]
\cdot \arrow[r, "\mathbf{C}"] \arrow[d, swap, "P \times P"] & \cdot \arrow[d, "P"] \\
\cdot \arrow[r, "\mathbf{C}"] \arrow[d, swap, "R \times R"] \arrow[ru, Rightarrow, shorten >=20pt, shorten <=20pt, "\mathbf{x}"] & \cdot \arrow[d, "R"] \\
\cdot \arrow[r, swap, "\mathbf{C}"] \arrow[ru, Rightarrow, shorten >=20pt, shorten <=20pt, "\mathbf{w}"] & \cdot
\end{tikzcd}
\end{equation}
We substitute (\ref{eq2}) back into (\ref{eq1}) to get
\begin{equation*}
\begin{tikzcd}[row sep=huge, column sep=huge]
\cdot \arrow[r, "\mathbf{C}"] \arrow[d, swap, "P \times P"] \arrow[dd, bend right=100, swap, "\mathrm{id}"{name=A}] & \cdot \arrow[d, "P"] \arrow[dd, bend left=100, "\mathrm{id}"{name=B}] \\
\cdot \arrow[r, "\mathbf{C}"] \arrow[d, swap, "R \times R"] \arrow[ru, Rightarrow, shorten >=20pt, shorten <=20pt, "\mathbf{x}"] & \cdot \arrow[d, "R"] \\
\cdot \arrow[r, swap, "\mathbf{C}"] \arrow[ru, Rightarrow, shorten >=20pt, shorten <=20pt, "\mathbf{w}"] & \cdot
\arrow[from=A, to=2-1, Rightarrow, shorten >=8pt, shorten <=8pt, "\mathbf{z} \times \mathbf{z}"] \arrow[from=2-2, to=B, Rightarrow, shorten >=8pt, shorten <=8pt, "\mathbf{z}^{-1}"]
\end{tikzcd}
\qquad = \qquad
\begin{tikzcd}[row sep=huge, column sep=huge]
\cdot \arrow[r, "\mathbf{C}"] \arrow[d, swap, "\mathrm{id}"] & \cdot \arrow[d, "\mathrm{id}"] \\
\cdot \arrow[r, swap, "\mathbf{C}"] \arrow[ru, Rightarrow, shorten >=20pt, shorten <=20pt, "\mathrm{id}"] & \cdot
\end{tikzcd}
\end{equation*}
by Theorem \ref{fordiagthm}.
\end{proof}

\begin{lem} \label{lem4}
The pullbacks of fibrations along any other morphism exist. Furthermore, the resulting morphism can be taken strict.
\end{lem}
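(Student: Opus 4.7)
The plan is to construct the pullback bigroupoid $\mathcal{P}$ explicitly. Writing the cospan as $(F, \phi) : \mathcal{A} \to \mathcal{C}$ and $(G, \gamma) : \mathcal{B} \to \mathcal{C}$ with $(G, \gamma)$ the fibration, I take the 0-cells of $\mathcal{P}$ to be pairs $(A, B)$ with $FA = GB$, the 1-cells $(A, B) \to (A', B')$ to be pairs $(a, b)$ of 1-cells with $Fa = Gb$ in $\mathcal{C}$, and the 2-cells $(a, b) \to (a', b')$ to be pairs $(\alpha, \beta)$ of 2-cells with $F\alpha = G\beta$. The projections $(P, \mathrm{id}) : \mathcal{P} \to \mathcal{A}$ and $(Q, \psi) : \mathcal{P} \to \mathcal{B}$ extract the respective coordinates, with $(P, \mathrm{id})$ strict by construction, delivering the ``can be taken strict'' clause of the lemma.

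The first genuine issue arises in composition: the naive pair $(a_2 * a_1, b_2 * b_1)$ is not a 1-cell of $\mathcal{P}$, because $F(a_2 * a_1)$ and $G(b_2 * b_1)$ agree only up to the 2-cell $\gamma \circ \phi^{-1} : F(a_2 * a_1) \Rightarrow G(b_2 * b_1)$ in $\mathcal{C}$. I resolve this by invoking the fibration property of $G$ to lift this 2-cell: I obtain a 1-cell $b \in \mathcal{B}(B, B'')$ with $Gb = F(a_2 * a_1)$ together with a 2-cell $b \Rightarrow b_2 * b_1$ whose $G$-image is $\gamma \circ \phi^{-1}$. The composite in $\mathcal{P}$ is then $(a_2 * a_1, b)$, and the component of $\psi$ at this composable pair is the inverse of the lifted 2-cell, so that automatically $G\psi = \phi \circ \gamma^{-1}$ and the pullback square $(G, \gamma) \circ (Q, \psi) = (F, \phi) \circ (P, \mathrm{id})$ commutes strictly. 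Units, pseudoinverses, and the structural isomorphisms $\mathbf{a}^{\mathcal{P}}, \mathbf{l}^{\mathcal{P}}, \mathbf{r}^{\mathcal{P}}, \mathbf{e}^{\mathcal{P}}, \mathbf{i}^{\mathcal{P}}$ (with $\mathcal{A}$-coordinate the corresponding structure of $\mathcal{A}$ and $\mathcal{B}$-coordinate obtained by a further fibred lift) are constructed analogously.

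The hard part is verifying the coherence axioms (\ref{coh1})--(\ref{coh3}) for $\mathcal{P}$ and (\ref{coh4}), (\ref{coh5}) for $(Q, \psi)$; the $\mathcal{A}$-components are automatic, but the $\mathcal{B}$-components involve chosen lifts whose uniqueness is not given to us, since fibrations need not be locally faithful. My plan is to reduce every such $\mathcal{B}$-equation to an equation in $\mathcal{C}$ by applying $G$, where it becomes a composite of structural isomorphisms of $\mathcal{C}$ together with instances of $\phi$ and $\gamma$; the coherence theorems for bigroupoids and pseudofunctors in the appendix then guarantee that this $\mathcal{C}$-equation holds. The strict pullback equality $G Q = F P$ together with Lemma \ref{lem1} (applied, for instance, to the composition yielding the strict morphism $(F, \phi) \circ (P, \mathrm{id})$) then promotes these $\mathcal{C}$-equations to the corresponding equations in $\mathcal{B}$, because in each case the compared 2-cells arise as $\mathcal{B}$-components of pairs in $\mathcal{P}$ whose $\mathcal{A}$-components already coincide by coherence in $\mathcal{A}$.

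Finally, for the universal property, a competing cone $(R, \rho) : \mathcal{D} \to \mathcal{A}$, $(S, \sigma) : \mathcal{D} \to \mathcal{B}$ satisfying $(F, \phi) \circ (R, \rho) = (G, \gamma) \circ (S, \sigma)$ forces the mediating morphism $(T, \tau)$ to send $D \mapsto (RD, SD)$, $d \mapsto (Rd, Sd)$, $\delta \mapsto (R\delta, S\delta)$, with $\tau = (\rho, \sigma)$; the required componentwise equalities and coherence conditions for $(T, \tau)$ all follow from the assumed equality of the two compositions, establishing both existence and uniqueness of the mediating pseudofunctor.
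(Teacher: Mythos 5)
Your construction of the pullback is essentially the one in the paper: 0-, 1- and 2-cells are pairs with strictly equal images under the two legs, the projection to the non-fibration leg is strict, and composition is obtained by lifting the discrepancy $\gamma \circ \phi^{-1}$ through the fibration (the paper packages your cell-by-cell lifting into Lemma \ref{lem3}, which also settles functoriality and naturality of the resulting composition functor and of $\psi$). The universal property argument matches as well, modulo the small correction that the $\mathcal{B}$-component of the mediating compositor must be twisted by the inverse of the non-strict projection's compositor.

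The genuine gap is in your verification of the coherence axioms. You propose to apply $G$ to each $\mathcal{B}$-component equation, verify the resulting equation in $\mathcal{C}$ by coherence, and then ``promote'' it back to $\mathcal{B}$. But $G$ is only a fibration, not locally faithful, so an equality of $G$-images does not yield an equality in $\mathcal{B}$; and your promotion step --- that the compared 2-cells are $\mathcal{B}$-components of pairs in $\mathcal{P}$ whose $\mathcal{A}$-components already coincide --- is a non sequitur: a 2-cell of $\mathcal{P}$ is literally a pair, and two pairs with equal first components (hence equal $G$-images of their second components) need not have equal second components. The equation you need is one between two composites in $\mathcal{B}$ built from $\mathbf{a}^{\mathcal{B}}, \mathbf{l}^{\mathcal{B}}, \ldots$ and the chosen lifts $\psi$, and it must be proved inside $\mathcal{B}$. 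This is exactly what Lemma \ref{lem2} supplies: the $Q$-image of a coherence diagram of $\mathcal{P}$ commutes if and only if the corresponding coherence diagram of $\mathcal{B}$ commutes for 1-cells in the image of $Q$, and the latter holds simply because $\mathcal{B}$ is a bigroupoid --- no descent to $\mathcal{C}$ and no faithfulness is needed. Relatedly, the paper avoids your separate verification of (\ref{coh4}) and (\ref{coh5}) for $(Q, \psi)$ by \emph{defining} $\mathbf{a}^{\mathcal{P}}$ (and the other structural cells) as the unique 2-cell whose two projections make (\ref{coh4}) commute for both projections; note also that the 2-cell lifting property of a fibration prescribes only the target 1-cell, so ``a further fibred lift'' cannot produce a 2-cell between two already-chosen lifts --- the correct $\mathcal{B}$-component is the conjugate of $\mathbf{a}^{\mathcal{B}}$ by the cells $\psi$, i.e.\ precisely the cell forced by (\ref{coh4}).
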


\begin{proof}
Given two morphisms $(F, \phi): \mathcal{B} \longrightarrow \mathcal{C}$ and $(G, \gamma): \mathcal{D} \longrightarrow \mathcal{C}$, with $F$ a fibration, we construct a square
\begin{equation} \label{pullb}
\begin{tikzcd}[row sep=huge, column sep=huge]
\mathcal{A} \arrow[r, "{(R, \rho)}"] \arrow[d, swap, "{(P, \pi)}"] & \mathcal{B} \arrow[d, "{(F, \phi)}"] \\
\mathcal{D} \arrow[r, swap, "{(G, \gamma)}"] & \mathcal{C}
\end{tikzcd}
\end{equation}
and demonstrate its universal property. The set of 0-cells $\mathcal{A}_0$, equipped with functions $R : \mathcal{A}_0 \longrightarrow \mathcal{B}_0$ and $P : \mathcal{A}_0 \longrightarrow \mathcal{D}_0$, is given by the pullback square (of sets!)
\begin{equation*}
\begin{tikzcd}[row sep=huge, column sep=huge]
\mathcal{A}_0 \arrow[r, "R"] \arrow[d, swap, "P"] & \mathcal{B}_0 \arrow[d, "F"] \\
\mathcal{D}_0 \arrow[r, swap, "G"] & \mathcal{C}_0
\end{tikzcd}
\end{equation*}
To cut back clutter, we write $PA = D$, $RA = B$ and $FB = GD = C$ for $A$ in $\mathcal{A}_{0}$. Given a pair of 0-cells $A$, $A'$ of $\mathcal{A}$, the groupoid $\mathcal{A}(A, A')$, equipped with functors $P_{A, A'} : \mathcal{A}(A, A') \longrightarrow \mathcal{D}(D, D')$ and $R_{A, A'} : \mathcal{A}(A, A') \longrightarrow \mathcal{B}(B, B')$ is given by the pullback square (of groupoids!)
\begin{equation*}
\begin{tikzcd}[row sep=huge, column sep=huge]
\mathcal{A}(A, A') \arrow[r, "R_{A,A'}"] \arrow[d, swap, "P_{A, A'}"] & \mathcal{B}(B, B') \arrow[d, "F_{B,B'}"] \\
\mathcal{D}(D, D') \arrow[r, swap, "G_{D, D'}"] & \mathcal{C}(C, C')
\end{tikzcd}
\end{equation*}
We will now provide the functor $\mathbf{C}_{A, A', A''} : \mathcal{A}(A',A'') \times \mathcal{A}(A,A') \longrightarrow \mathcal{A}(A,A'')$ for a given triple of 0-cells $A$, $A'$, $A''$. Consider the following square:
\begin{equation} \label{cdef}
\begin{tikzcd}[row sep=huge, column sep=huge]
\mathcal{A}(A',A'') \times \mathcal{A}(A,A') \arrow[r, dashed, bend left=50, "\exists H"{name=H}] \arrow[r, "\mathbf{C} \circ (R \times R)"{name=D}] \arrow[d, swap, "\mathbf{C} \circ (P \times P)"] & \mathcal{B}(B',B'') \times \mathcal{B}(B,B') \arrow[d, "F"] \\
\mathcal{D}(D,D'') \arrow[r, swap, "G"] \arrow[ru, Rightarrow, shorten >=40pt, shorten <=40pt, "\phi R \circ (\gamma P)^{-1}"] & \mathcal{C}(C,C'')
\arrow[from=H, to =D, swap, Rightarrow, dashed, shorten >=5pt, shorten <=5pt, "\exists \alpha"]
\end{tikzcd}
\end{equation}
The calculation
\begin{equation*}
G \circ \mathbf{C} \circ (P \times P) \xRightarrow{(\gamma P)^{-1}} \mathbf{C} \circ (G \times G) \circ (P \times P) = \mathbf{C} \circ (F \times F) \circ (R \times R) \xRightarrow{\; \phi R \;} F \circ \mathbf{C} \circ (R \times R)
\end{equation*}
shows that (\ref{cdef}) indeed commutes up to the natural isomorphism $\phi R \circ (\gamma P)^{-1}$. By Lemma \ref{lem3} there exists a functor $H (= H_{A, A', A''})$ which makes the square commute, along with a natural isomorphism 
\begin{equation*}
\alpha (= \alpha_{A, A', A''}) : H \Longrightarrow \mathbf{C} \circ (R \times R)
\end{equation*}
(both indicated by dashed arrows), such that $F \alpha = \phi R \circ (\gamma P)^{-1}$. By the universal property of $\mathcal{A}(A,A'')$, this commuting square (\ref{cdef}) gives rise to the functor we are looking for
\begin{equation*}
\mathbf{C}_{A, A', A''} = \langle \mathbf{C}_{D, D', D''} \circ (P_{A', A''} \times P_{A, A'}) , H_{A, A', A''} \rangle .
\end{equation*}

We finish the definition of $(P, \pi)$ and $(R, \rho)$ by setting
\begin{equation*}
\pi_{A, A', A''} = \mathrm{id} : \mathbf{C}_{D, D', D''} \circ (P_{A', A''} \times P_{A, A'}) \Longrightarrow P_{A, A''} \circ \mathbf{C}_{A, A', A''}
\end{equation*}
and
\begin{equation*}
\rho_{A, A', A''} = \alpha_{A, A', A''}^{-1} : \mathbf{C}_{B, B', B''} \circ (R_{A', A''} \times R_{A, A'}) \Longrightarrow R_{A, A''} \circ \mathbf{C}_{A, A', A''}.
\end{equation*}
The calculations
\begin{equation*}
(F, \phi) \circ (R, \rho) = (F \circ R, F \rho \circ \phi R) = (F \circ R, F \alpha^{-1} \circ \phi R) = (F \circ R, (\phi R \circ ( \gamma P)^{-1})^{-1} \circ \phi R) = (F \circ R, \gamma P)
\end{equation*}
and
\begin{equation*}
(G, \gamma) \circ (P, \pi) = (G \circ P, G \pi \circ \gamma P) = (G \circ P, \gamma P)
\end{equation*}
show that (\ref{pullb}) commutes.

The definition of $\mathcal{A}$ is finished by letting
\begin{equation*}
\mathbf{a}_{A, A', A'', A'''} : \mathbf{C}_{A, A', A'''} \circ ( \mathbf{C}_{A', A'', A'''} \times \mathrm{id} ) \Longrightarrow \mathbf{C}_{A, A'', A'''} \circ ( \mathrm{id} \times \mathbf{C}_{A, A', A'''} )
\end{equation*}
be the unique natural isomorphism such that for any combination 
\begin{equation*}
A \overset{a}{\longrightarrow} A' \overset{a'}{\longrightarrow} A'' \overset{a''}{\longrightarrow} A''' 
\end{equation*}
of composable 1-cells the diagrams 
\begin{equation*}
\begin{tikzcd}[row sep=huge, column sep=huge]
(Pa'' * Pa') * Pa \arrow[r, "\pi * \mathrm{id}"] \arrow[d, swap, "\mathbf{a}"] & P(a'' * a') * Pa \arrow[r, "\pi"] & P((a'' * a') * a) \arrow[d, dashed, "P \mathbf{a}"] \\
Pa'' * (Pa' * Pa) \arrow[r, swap, "\mathrm{id} * \pi"] & Pa'' * P(a' * a) \arrow[r, swap, "\pi"] & P(a'' * (a' * a))
\end{tikzcd}
\end{equation*}
and
\begin{equation*}
\begin{tikzcd}[row sep=huge, column sep=huge]
(Ra'' * Ra') * Ra \arrow[r, "\rho * \mathrm{id}"] \arrow[d, swap, "\mathbf{a}"] & R(a'' * a') * Ra \arrow[r, "\rho"] & R((a'' * a') * a) \arrow[d, dashed, "R \mathbf{a}"] \\
Ra'' * (Ra' * Ra) \arrow[r, swap, "\mathrm{id} * \rho"] & Ra'' * R(a' * a) \arrow[r, swap, "\rho"] & R(a'' * (a' * a))
\end{tikzcd}
\end{equation*}
commute. (The dashed arrows mark the two projections of $\mathbf{a}_{A, A', A'', A'''}$.) In other words, we force the diagram (\ref{coh4}) to commute.

To show that $\mathcal{A}$ is a bigroupoid, we must verify that the diagrams (\ref{coh1}), (\ref{coh2}) and (\ref{coh3}) commute in $\mathcal{A}$. Since a diagram in $\mathcal{A}$ commutes if and only if the projections of this diagram under $P$ and $R$ commute in $\mathcal{D}$ and $\mathcal{B}$ respectively, this follows from $\textbf{(1)} \Longrightarrow \textbf{(2)}$ of Lemma \ref{lem2}.

Lastly, we demonstrate that our square has the desired universal property:
\begin{equation*}
\begin{tikzcd}[row sep=huge, column sep=huge]
\mathcal{E} \arrow[dr, dashed, "{\exists ! (L, \lambda)}"] \arrow[ddr, bend right, swap, "{(S, \sigma)}"] \arrow[drr, bend left, "{(T, \tau)}"]& & \\
& \mathcal{A} \arrow[r, "{(R, \rho)}"] \arrow[d, swap, "{(P, \pi)}"] & \mathcal{B} \arrow[d, "{(F, \phi)}"] \\
& \mathcal{D} \arrow[r, swap, "{(G, \gamma)}"] & \mathcal{C}
\end{tikzcd}
\end{equation*}
It is not difficult to check that there exists a unique incoherent morphism $(L, \lambda):  \mathcal{E} \longrightarrow \mathcal{A}$ satisfying
\begin{equation*}
(S, \sigma) = (P, \pi) \circ (L, \lambda) = (P \circ L, P \lambda \circ \pi L) \qquad \text{and} \qquad (T, \tau) = (R, \rho) \circ (L, \lambda) = (R \circ L, R \lambda \circ \rho L),
\end{equation*}
namely
\begin{align*}
L & = \langle S, T \rangle : \mathcal{E}_{0} \longrightarrow \mathcal{A}_{0} \\
L_{E, E'} & = \langle S_{E, E'}, T_{E, E'} \rangle : \mathcal{E}(E, E') \longrightarrow \mathcal{A}(LE, LE') \\
\lambda & = \langle \sigma \circ (\pi L)^{-1}, \tau \circ (\rho L)^{-1} \rangle.
\end{align*}
To show that $(L, \lambda)$ is a morphism, we must verify that the diagrams (\ref{coh4}) and (\ref{coh5}) commute for $\lambda$. Again, it suffices that the projections of these diagrams under $P$ and $R$ commute in $\mathcal{D}$ and $\mathcal{B}$. But this follows directly from $\textbf{(1)} + \textbf{(3)} \Longrightarrow \textbf{(2)}$ of Lemma \ref{lem1}.
\end{proof}

\begin{lem} \label{lem11}
\hfill
\begin{description}
\item[(1)] Fibrations are closed under composition.
\item[(2)] Every isomorphism is a fibration.
\item[(3)] Fibrations are closed under pullback.
\end{description}
\end{lem}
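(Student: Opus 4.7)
The plan is to dispatch the three parts separately. Parts (1) and (2) are near-immediate from the definitions, while (3) will make essential use of the concrete pullback construction from Lemma \ref{lem4}.

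For (1), I would observe that the composition $(G,\psi)\circ(F,\phi)$ acts on 0-cells, 1-cells and 2-cells as the ordinary composition of the underlying maps; the pseudofunctor data $G\phi\circ\psi F$ only affects the structural natural isomorphisms. Hence, given a 1-cell $c\colon C\to GFA'$ in $\mathcal{C}$, I would first invoke the fibration property of $G$ to produce $b\colon B\to FA'$ with $GB=C$ and $Gb=c$, and then the fibration property of $F$ to produce $a\colon A\to A'$ with $FA=B$ and $Fa=b$. Lifting of 2-cells proceeds identically.

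For (2), I would argue that an isomorphism in the category of bigroupoids must admit a two-sided inverse pseudofunctor in the sense of the composition formula given earlier in the paper; in particular, its underlying function on 0-cells is a bijection and each local functor $F_{A,A'}$ is an isomorphism of groupoids. The required lifts are then obtained by applying these inverses directly.

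For (3), I would use the pullback square constructed in Lemma \ref{lem4}, in which $\mathcal{A}_0$ is a pullback of sets and each $\mathcal{A}(A,A')$ a pullback of groupoids. Given a 1-cell $d\colon D\to PA'$ in $\mathcal{D}$, the composite $Gd\colon GD\to GPA'=FRA'$ lifts through the fibration $F$ to a 1-cell $b\colon B\to RA'$ with $FB=GD$ and $Fb=Gd$. The equality $FB=GD$ then supplies, via the pullback of sets, a (unique) 0-cell $A$ of $\mathcal{A}$ with $PA=D$ and $RA=B$; while the equality $Fb=Gd$ supplies, via the pullback of hom-groupoids, a 1-cell $a\in\mathcal{A}(A,A')$ with $Pa=d$ and $Ra=b$. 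Lifting of 2-cells follows exactly the same recipe: given $\delta\colon d\to Pa'$, lift $G\delta$ through $F$ to a 2-cell $\beta$ with $F\beta=G\delta$, and pair $\delta$ with $\beta$ in the pullback hom-groupoid. The only real obstacle, which I expect to be mild, is tracking the compatibility conditions at the level of 0-cells and at the level of arrows simultaneously; the fibration property of $F$ is exactly what arranges these conditions to hold.
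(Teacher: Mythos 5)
Your proposal is correct and follows the same route as the paper, whose proof of this lemma is simply ``Straightforward'' together with the remark that, by parts \textbf{(1)} and \textbf{(2)}, it suffices to verify \textbf{(3)} for the explicit pullback of Lemma \ref{lem4} --- exactly the construction you use. The only detail worth making explicit is that reduction step itself: an arbitrary pullback differs from the explicit one by an isomorphism, so its projection is a fibration because it is a composite of an isomorphism with the explicit projection.
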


\begin{proof}
Straightforward. By \textbf{(1)} and \textbf{(2)}, it suffices to check \textbf{(3)} for the explicit construction made in Lemma \ref{lem4}.
\end{proof}

\begin{lem} \label{lem9}
Let $(F, \phi) : \mathcal{A} \longrightarrow \mathcal{C}$ be a trivial cofibration. Then there exists a factorization 
\begin{equation*}
\mathcal{A} \xrightarrow{(G, \gamma)} \mathcal{B} \xrightarrow{(H, \mathrm{id})} \mathcal{C}
\end{equation*}
of $F$, where $G$ is a trivial cofibration which is surjective on 0-cells and $H$ is a strict trivial cofibration which is also a local isomorphism.
\end{lem}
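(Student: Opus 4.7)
The plan is to take $\mathcal{B}$ to be the bigroupoid obtained by taking the same 0-cells as $\mathcal{A}$ but the hom-groupoids of $\mathcal{C}$ along the image of $F$. Concretely, set $\mathcal{B}_0 = \mathcal{A}_0$ and $\mathcal{B}(A, A') = \mathcal{C}(FA, FA')$, and declare every piece of structure on $\mathcal{B}$ (composition, units, inverses, and the coherence isomorphisms $\mathbf{a}, \mathbf{l}, \mathbf{r}, \mathbf{e}, \mathbf{i}$) to be literally the corresponding structure of $\mathcal{C}$. Define $(H, \mathrm{id}) : \mathcal{B} \to \mathcal{C}$ by $H_0 = F_0$ on 0-cells and $H_{A, A'} = \mathrm{id}_{\mathcal{C}(FA, FA')}$ locally; and define $(G, \gamma) : \mathcal{A} \to \mathcal{B}$ by $G_0 = \mathrm{id}_{\mathcal{A}_0}$, $G_{A, A'} = F_{A, A'}$, with pseudofunctor data $\gamma = \phi$ (reinterpreted in $\mathcal{B}$, which is legitimate since $\mathcal{B}(A, A'') = \mathcal{C}(FA, FA'')$ contains the components of $\phi_{A, A', A''}$ verbatim).

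Next I would check that $\mathcal{B}$ is actually a bigroupoid and that $(H, \mathrm{id})$ and $(G, \gamma)$ are morphisms. Coherence of $\mathcal{B}$ is immediate: the diagrams (\ref{coh1})--(\ref{coh3}) in $\mathcal{B}$ are the same diagrams as in $\mathcal{C}$, which commute because $\mathcal{C}$ is a bigroupoid. For $(H, \mathrm{id})$, strictness and local invertibility are clear from the definition, and the coherence diagrams (\ref{coh4}) and (\ref{coh5}) are trivially satisfied. For $(G, \gamma)$, the coherence diagrams for $\gamma = \phi$ are, again tautologically, the coherence diagrams for $\phi$ in $\mathcal{C}$, which commute because $(F, \phi)$ is a morphism. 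The identity $(H, \mathrm{id}) \circ (G, \gamma) = (F, \phi)$ is then a direct computation: on 0-cells one has $H \circ G = F_0 \circ \mathrm{id} = F_0$, locally $H_{A, A'} \circ G_{A, A'} = \mathrm{id} \circ F_{A, A'} = F_{A, A'}$, and the pseudofunctor data give $H \gamma \circ \mathrm{id} G = \mathrm{id} \circ \phi = \phi$.

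Finally I would verify the properties required of $G$ and $H$. For $H$: strictness and the fact that each $H_{A, A'} = \mathrm{id}$ is an isomorphism of groupoids are immediate; $H$ is a cofibration since $H_0 = F_0$ is injective (by $F$ being a cofibration) and each $H_{A, A'}$ is the identity on objects; and $H$ is a weak equivalence because each $H_{A, A'}$ is an equivalence trivially, and every 0-cell $C$ of $\mathcal{C}$ admits a 1-cell $C \to FA' = HA'$ by part (1) of Definition \ref{dfn3} applied to $F$. For $G$: the map $G_0 = \mathrm{id}$ is bijective (hence both injective and surjective), each $G_{A, A'} = F_{A, A'}$ is injective on objects (from $F$ being a cofibration) and an equivalence of categories (from $F$ being a weak equivalence).

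There is no serious obstacle in this argument; the only mildly subtle point is recognizing that the codomain groupoids of $F$ can be used directly as the hom-groupoids of $\mathcal{B}$, which is what forces $H$ to be strict and a local isomorphism and simultaneously lets $\gamma$ be defined to be $\phi$ without change.
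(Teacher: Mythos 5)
Your proposal is correct and is essentially the paper's own argument: the paper takes $\mathcal{B}$ to be the full sub-bigroupoid of $\mathcal{C}$ on the 0-cells in the image of $F$, which (since $F$ is injective on 0-cells) is just your construction with $\mathcal{B}_0$ relabelled as the image of $F_0$ rather than as $\mathcal{A}_0$ itself. The paper leaves all verifications to the reader; yours fill them in correctly.
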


\begin{proof}
Let $\mathcal{B}$ be the sub-bigroupoid of $\mathcal{C}$ consisting of the 0-cells in the image of $F$ with all 1- and 2-cells of $\mathcal{C}$ between them. One easily verifies that the evident morphisms $(G, \gamma) : \mathcal{A} \longrightarrow \mathcal{B}$ and $(H, \mathrm{id}) : \mathcal{B} \longrightarrow \mathcal{C}$ have the desired properties.
\end{proof}

\begin{lem}
The trivial cofibrations have the left lifting property with respect to the fibrations.
\end{lem}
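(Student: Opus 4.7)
The plan is to reduce the general lifting problem to the two special cases already handled in Lemmas \ref{surlift} and \ref{isolift}, by combining the factorization of Lemma \ref{lem9} with the strict pullback construction of Lemma \ref{lem4}.

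Given a commutative square with a trivial cofibration $(K, \kappa) : \mathcal{A} \longrightarrow \mathcal{D}$ on the left, top map $(F, \phi)$, bottom map $(H, \eta)$, and fibration $(G, \gamma) : \mathcal{B} \longrightarrow \mathcal{C}$ on the right, first factor $(K, \kappa) = (K'', \mathrm{id}) \circ (K', \kappa')$ using Lemma \ref{lem9}, where $(K', \kappa') : \mathcal{A} \longrightarrow \mathcal{E}$ is a trivial cofibration surjective on 0-cells and $(K'', \mathrm{id}) : \mathcal{E} \longrightarrow \mathcal{D}$ is a strict trivial cofibration which is also a local isomorphism. The sub-square whose left leg is $(K', \kappa')$ and whose bottom is $(H, \eta) \circ (K'', \mathrm{id})$ still commutes, so Lemma \ref{surlift} yields a diagonal $(L', \lambda') : \mathcal{E} \longrightarrow \mathcal{B}$ satisfying $(L', \lambda') \circ (K', \kappa') = (F, \phi)$ and $(G, \gamma) \circ (L', \lambda') = (H, \eta) \circ (K'', \mathrm{id})$.

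It remains to lift the residual square, whose left leg is now $(K'', \mathrm{id})$. Lemma \ref{isolift} would apply here if the right leg were a strict fibration, but $(G, \gamma)$ need not be strict. The remedy is to pull $(G, \gamma)$ back along $(H, \eta)$ via Lemma \ref{lem4}, producing a strict fibration $(G', \mathrm{id}) : \mathcal{P} \longrightarrow \mathcal{D}$ (strict by Lemma \ref{lem4}, a fibration by Lemma \ref{lem11}) together with a pullback projection $(R, \rho) : \mathcal{P} \longrightarrow \mathcal{B}$ satisfying $(G, \gamma) \circ (R, \rho) = (H, \eta) \circ (G', \mathrm{id})$. The cone formed by $(L', \lambda')$ and $(K'', \mathrm{id})$ then induces a unique $(M, \mu) : \mathcal{E} \longrightarrow \mathcal{P}$ with $(R, \rho) \circ (M, \mu) = (L', \lambda')$ and $(G', \mathrm{id}) \circ (M, \mu) = (K'', \mathrm{id})$, so Lemma \ref{isolift} applied to the square with left leg $(K'', \mathrm{id})$, right leg $(G', \mathrm{id})$, top $(M, \mu)$, and bottom $\mathrm{id}_{\mathcal{D}}$ supplies a diagonal $(L'', \lambda'') : \mathcal{D} \longrightarrow \mathcal{P}$.

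Setting $(L, \lambda) = (R, \rho) \circ (L'', \lambda'')$ gives the required lift. The upper triangle commutes via the chain
\begin{equation*}
(L, \lambda) \circ (K, \kappa) = (R, \rho) \circ (L'', \lambda'') \circ (K'', \mathrm{id}) \circ (K', \kappa') = (R, \rho) \circ (M, \mu) \circ (K', \kappa') = (L', \lambda') \circ (K', \kappa') = (F, \phi),
\end{equation*}
and the lower triangle commutes because $(G, \gamma) \circ (R, \rho) = (H, \eta) \circ (G', \mathrm{id})$ together with $(G', \mathrm{id}) \circ (L'', \lambda'') = \mathrm{id}$. The main obstacle in the plan is the strictness mismatch at the second stage that blocks direct application of Lemma \ref{isolift}; the detour through the strict pullback from Lemma \ref{lem4} is precisely what resolves it, and the rest is bookkeeping of the factorizations and universal properties already established.
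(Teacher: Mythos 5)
Your proposal is correct and uses exactly the same ingredients as the paper's proof (Lemma \ref{lem9} to factor the trivial cofibration, Lemma \ref{surlift} for the 0-cell-surjective part, Lemma \ref{isolift} for the local-isomorphism part, and the strict pullback of Lemmas \ref{lem4} and \ref{lem11} to repair the strictness mismatch); the only difference is that the paper performs the pullback reduction first, so that the whole problem is normalized to a square with strict right leg and identity bottom before the factorization of $K$ is invoked, whereas you factor $K$ first and introduce the pullback only at the second stage. Both orderings work, and the verifications you give of the two triangles are sound.
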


\begin{proof}
Let the lifting problem
\begin{equation} \label{lift1}
\begin{tikzcd}[row sep=huge, column sep=huge]
\mathcal{A} \arrow[r, "{(F, \phi)}"] \arrow[d, swap, "{(K, \kappa)}"] & \mathcal{B} \arrow[d, "{(G, \gamma)}"] \\
\mathcal{D} \arrow[r, swap, "{(H, \eta)}"] \arrow[ru, dashed, "?"] & \mathcal{C}
\end{tikzcd}
\end{equation}
be given, in which $K$ is a trivial cofibration and $G$ is a fibration.

Consider the pullback $\mathcal{E}$, of $G$ along $H$, and apply its universal property to obtain
\begin{equation} \label{liftpull}
\begin{tikzcd}[row sep=huge, column sep=huge]
\mathcal{A} \arrow[rr, bend left, "{(F, \phi)}"] \arrow[r, dashed, swap, "\exists !"] \arrow[d, swap, "{(K, \kappa)}"] & \mathcal{E} \arrow[r] \arrow[d, swap, "{(G', \mathrm{id})}"] & \mathcal{B} \arrow[d, "{(G, \gamma)}"] \\
\mathcal{D} \arrow[r, swap, "\mathrm{id}"] & \mathcal{D} \arrow[r, swap, "{(H, \eta)}"] & \mathcal{C}
\end{tikzcd}
\end{equation}
Note that this pullback exists and yields a strict fibration $G'$ due to Lemma \ref{lem4} and Lemma \ref{lem11}. The observation that a diagonal filler for the left square in (\ref{liftpull}) results in a filler for the original square (\ref{lift1}) establishes that we may assume that (\ref{lift1}) is of the form
\begin{equation} \label{lift2}
\begin{tikzcd}[row sep=huge, column sep=huge]
\mathcal{A} \arrow[r, "{(F, \phi)}"] \arrow[d, swap, "{(K, \kappa)}"] & \mathcal{B} \arrow[d, "{(G, \mathrm{id})}"] \\
\mathcal{C} \arrow[r, swap, "\mathrm{id}"] & \mathcal{C}
\end{tikzcd}
\end{equation}

Factorize $(K, \kappa)$ into $(T, \mathrm{id}) \circ (S, \sigma)$, using Lemma \ref{lem9}. Substituting this into (\ref{lift2}) yields the square
\begin{equation*}
\begin{tikzcd}[row sep=huge, column sep=huge]
\mathcal{A} \arrow[r, "{(F, \phi)}"] \arrow[d, swap, "{(S, \sigma)}"] & \mathcal{B} \arrow[d, "{(G, \mathrm{id})}"] \\
\mathcal{D} \arrow[r, swap, "{(T, \mathrm{id})}"] \arrow[ru, dashed, "{\exists (L, \lambda)}"] & \mathcal{C}
\end{tikzcd}
\end{equation*}
for which the indicated lift $L$ exists by virtue of Lemma \ref{surlift}. Lemma \ref{isolift}, in turn, provides a lift $M$ for the square
\begin{equation*}
\begin{tikzcd}[row sep=huge, column sep=huge]
\mathcal{D} \arrow[r, "{(L, \lambda)}"] \arrow[d, swap, "{(T, \mathrm{id})}"] & \mathcal{B} \arrow[d, "{(G, \mathrm{id})}"] \\
\mathcal{C} \arrow[r, swap, "\mathrm{id}"] \arrow[ru, dashed, "{\exists (M, \mu)}"] & \mathcal{C}
\end{tikzcd}
\end{equation*}
as shown. But then $M$ is a diagonal filler for (\ref{lift2}).
\end{proof}

\subsection{Factorization}

\begin{dfn}
A \emph{path object} on a bigroupoid $\mathcal{B}$ is a factorisation of the diagonal $\Delta : \mathcal{B} \longrightarrow \mathcal{B} \times \mathcal{B}$ as a weak equivalence $R : \mathcal{B} \longrightarrow \mathcal{PB}$ followed by a fibration $\langle S, T \rangle : \mathcal{PB} \longrightarrow \mathcal{B} \times \mathcal{B}$.
\end{dfn}

The construction for path objects that we give below is basically the same as the one given in \cite{MR2138540} for bicategories. 

\begin{lem} \label{lem7}
Every bigroupoid has a path object.
\end{lem}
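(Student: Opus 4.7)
The plan is to build $\mathcal{PB}$ as an arrow-bigroupoid modelled on the classical arrow-groupoid construction. I take $(\mathcal{PB})_0$ to be the set of triples $(A, B, f)$ with $f \colon A \to B$ a 1-cell of $\mathcal{B}$; a 1-cell $(A_0, B_0, f_0) \to (A_1, B_1, f_1)$ to be a triple $(a, b, \alpha)$ with $a \colon A_0 \to A_1$ and $b \colon B_0 \to B_1$ 1-cells of $\mathcal{B}$ and $\alpha \colon b * f_0 \Rightarrow f_1 * a$ a 2-cell; and a 2-cell $(a, b, \alpha) \Rightarrow (a', b', \alpha')$ to be a pair $(\xi, \eta)$ of 2-cells of $\mathcal{B}$ making the evident square built from $\alpha$, $\alpha'$, $\xi$, $\eta$ commute. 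Composition, identities, and inverses of 1- and 2-cells act componentwise in the $a$- and $b$-slots (using those of $\mathcal{B}$), while the induced $\alpha$-component of a composite 1-cell is the canonical 2-cell assembled from the $\alpha$'s of the factors together with $\mathbf{a}$. The structural isomorphisms $\mathbf{a}$, $\mathbf{l}$, $\mathbf{r}$, $\mathbf{e}$, $\mathbf{i}$ of $\mathcal{PB}$ are inherited componentwise from those of $\mathcal{B}$.

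The projection $\langle S, T\rangle \colon \mathcal{PB} \to \mathcal{B} \times \mathcal{B}$ sends $(A, B, f)$ to $(A, B)$ and $(a, b, \alpha)$ to $(a, b)$, and is strict by construction. The morphism $R \colon \mathcal{B} \to \mathcal{PB}$ sends $A$ to $(A, A, 1_A)$ and $f \colon A \to B$ to $(f, f, \iota_f)$ with $\iota_f \colon f * 1_A \Rightarrow 1_B * f$ the canonical 2-cell, so that $\langle S, T\rangle \circ R = \Delta$ on the nose. To see that $R$ is a weak equivalence, I use that any 0-cell $f \colon A \to B$ of $\mathcal{PB}$ receives a 1-cell $R(B) = 1_B \to f$ given by $(f, 1_B, \iota)$ with $\iota$ canonical; that each $R_{A, A'}$ is fully faithful because the 2-cell compatibility forces $\xi = \eta$ modulo unitors; and that it is essentially surjective because the datum $\alpha$ of any $(a, b, \alpha) \in \mathcal{PB}(RA, RA')$ yields, via the unitors, a 2-iso $b \Rightarrow a$ exhibiting $(a, b, \alpha) \cong R(a)$. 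To see that $\langle S, T\rangle$ is a fibration, I lift $(a \colon A' \to A, b \colon B' \to B)$ beneath $(A, B, f)$ to the 0-cell $(A', B', b^* * f * a)$ together with the 1-cell $(a, b, \alpha)$ whose $\alpha$ is the canonical 2-cell $b * (b^* * f * a) \Rightarrow f * a$ built from $\mathbf{i}$ and $\mathbf{a}$; and I lift a 2-cell $(\xi, \eta)$ below $(a, b, \alpha) \colon f_0 \to f$ by setting $\alpha' = (\mathrm{id}_{f_1} * \xi)^{-1} \circ \alpha \circ (\eta * \mathrm{id}_{f_0})$ and using the 2-cell $(\xi, \eta) \colon (a', b', \alpha') \Rightarrow (a, b, \alpha)$, which satisfies the required compatibility by construction.

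The main obstacle is verifying that $\mathcal{PB}$ satisfies the bigroupoid axioms (\ref{coh1})--(\ref{coh3}), that $R$ satisfies the pseudofunctor axioms (\ref{coh4})--(\ref{coh5}), and that all the auxiliary $\alpha$-slot assignments above are in fact natural and compatible. Since every structural 2-cell in the $\alpha$-slot of $\mathcal{PB}$ is built purely from canonical isomorphisms of $\mathcal{B}$, each such check reduces, via the coherence theorems for bigroupoids and for pseudofunctors mentioned in the introduction, to the observation that a certain diagram of canonical isomorphisms in $\mathcal{B}$ commutes automatically. Moreover, the strict projections $S$ and $T$ are jointly faithful on 2-cells, so as in the pullback construction of Lemma \ref{lem4}, coherence in $\mathcal{PB}$ is further reduced to (trivial) coherence in $\mathcal{B} \times \mathcal{B}$ via Lemma \ref{lem2}.
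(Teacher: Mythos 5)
Your construction is essentially identical to the paper's: the same arrow-bigroupoid $\mathcal{PB}$ (up to an immaterial reversal of the direction of the 2-cell datum in a 1-cell), the same componentwise structure justified by the coherence theorem, and the same strict $R$ and $\langle S, T\rangle$; you merely spell out the fibration and weak-equivalence checks that the paper leaves to the reader. The only nit is that for condition \textbf{(1)} of Definition \ref{dfn3} you want a 1-cell \emph{from} the 0-cell $f$ \emph{to} $R(B)$, i.e.\ $(A,B,f) \longrightarrow (B,B,1_B)$, which your data $(f, 1_B, \iota)$ in fact provide once the direction is read correctly.
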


\begin{proof}
Let $\mathcal{B}$ be a bigroupoid. We construct a path object $\mathcal{PB}$ for $\mathcal{B}$. By virtue of Theorem \ref{fordiagthm}, we allow ourselves to write as if $\mathcal{B}$ were a strict bigroupoid. The set of 0-cells of $\mathcal{PB}$ is the set of all 1-cells of $\mathcal{B}$. Given a pair of 0-cells $a : A \longrightarrow A'$, $b : B \longrightarrow B'$ in $\mathcal{PB}$, a 1-cell $a \longrightarrow b$ is a triple $(f, \phi, f')$, with $f : A \longrightarrow B$, $f' : A' \longrightarrow B'$ and $\phi : f' * a \longrightarrow b * f$. We can visualize such a 1-cell of $\mathcal{PB}$ as a square of 1-cells in $\mathcal{B}$, which commutes up to a 2-cell:
\begin{equation*}
\begin{tikzcd}[row sep=huge, column sep=huge]
A \arrow[r, "f"] \arrow[d, swap, "a"] & B \arrow[d, "b"] \\
A' \arrow[r, swap, "f'"]  \arrow[ru, Rightarrow, shorten >=20pt, shorten <=20pt, "\phi"] & B'
\end{tikzcd}
\end{equation*}
A 2-cell from $(f, \phi, f')$ to $(g, \psi, g')$ is a pair $(\alpha, \alpha')$ of 2-cells $\alpha : f \longrightarrow g$, $\alpha' : f' \longrightarrow g'$ in $\mathcal{B}$, such that the diagram
\begin{equation*}
\begin{tikzcd}[row sep=huge, column sep=huge]
f' a \arrow[r, "\phi"] \arrow[d, swap, "\alpha' * \mathrm{id}"] & b f \arrow[d, "\mathrm{id} * \alpha"] \\
g' a \arrow[r, swap, "\psi"] & b g
\end{tikzcd}
\end{equation*}
commutes. One easily checks that $\mathcal{PB}(a, b)$, defined in this way, forms a groupoid.

Next, we define the functor $\mathbf{C}_{a, b, c} : \mathcal{PB}(b, c) \times \mathcal{PB}(a, b) \longrightarrow \mathcal{PB}(a, c)$. Given two 1-cells $(f, \phi, f') : a \longrightarrow b$ and $(g, \psi, g') : b \longrightarrow c$, we define
\begin{equation*}
(g, \psi, g') * (f, \phi, f') = (g * f, \psi * \phi, g'* f').
\end{equation*}
The composition $\psi * \phi$ makes sense, because we are willfully ignorant about associativity issues. Given four 1-cells 
\begin{equation*}
(f_{1}, \phi_{1}, f_{1}'), (f_{2}, \phi_{2}, f_{2}') : a \longrightarrow b \qquad \text{and} \qquad (g_{1}, \psi_{1}, g_{1}'), (g_{2}, \psi_{2}, g_{2}') : b \longrightarrow c
\end{equation*}
and 2-cells
\begin{equation*}
(\alpha, \alpha') : (f_{1}, \phi_{1}, f_{1}') \longrightarrow (f_{2}, \phi_{2}, f_{2}') \qquad \text{and} \qquad (\beta, \beta') : (g_{1}, \psi_{1}, _{1}') \longrightarrow (g_{2}, \psi_{2}, g_{2}')
\end{equation*}
between them, we define
\begin{equation*}
(\beta, \beta') * (\alpha, \alpha') = (\beta * \alpha, \beta' * \alpha').
\end{equation*}
The commutative diagram
\begin{equation*}
\begin{tikzcd}[row sep=huge, column sep=huge]
g_{1}' f_{1}' a \arrow[r, "\mathrm{id} * \phi_{1}"] \arrow[d, swap, "\beta' * \alpha' * \mathrm{id}"] & g_{1}' b f_{1} \arrow[r, "\psi_{1} * \mathrm{id}"] \arrow[d, swap, "\beta' * \mathrm{id} * \alpha"] & c g_{1} f_{1} \arrow[d, "\mathrm{id} * \beta * \alpha"] \\
g_{2}' f_{2}' a \arrow[r, swap, "\mathrm{id} * \phi_{2}"] & g_{2}' b f_{2} \arrow[r, swap, "\psi_{2} * \mathrm{id}"] & c g_{2} f_{2}
\end{tikzcd}
\end{equation*}
confirms that $(\beta * \alpha, \beta' * \alpha')$ is in fact a 2-cell.

Next, for any four 0-cells $a: A \longrightarrow A'$, $b: B \longrightarrow B'$, $c: C \longrightarrow C'$, $d: D \longrightarrow D'$ in $\mathcal{PB}$, we define the natural isomorphism $\mathbf{a}_{a, b, c, d}$. Given 1-cells $(f, \phi, f') : a \longrightarrow b$, $(g, \psi, g') : b \longrightarrow c$ and $(h, \theta, h') : c \longrightarrow d$, we take 
\begin{equation*}
(\mathbf{a}_{a, b, c, d})_{((h, \theta, h'), (g, \psi, g'), (f, \phi, f'))} = ((\mathbf{a}_{A, B, C, D})_{(h, g, f)}, (\mathbf{a}_{A', B', C', D'})_{(h', g', f')}).
\end{equation*}
In order for this to be a genuine 2-cell, the diagram 
\begin{equation} \label{pathsq}
\begin{tikzcd}[row sep=huge, column sep=huge]
((h' g') f') a \arrow[r, "(\theta * \psi) * \phi"] \arrow[d, swap, "\mathbf{a} * \mathrm{id}"] & d ((h g) f) \arrow[d, "\mathrm{id} * \mathbf{a}"] \\
(h' (g' f')) a \arrow[r, swap, "\theta * (\psi * \phi)"] & d (h (g f))
\end{tikzcd}
\end{equation}
must commute. Since we may calculate as if $\mathcal{B}$ were strict, we can remove all brackets appearing in (\ref{pathsq}) and set $\mathbf{a} = \mathrm{id}$, resulting in a square that trivially commutes. The diagrams (\ref{coh1}), (\ref{coh2}) and (\ref{coh3}) commute simply because they commute componentwise, hence $\mathcal{PB}$ is a bigroupoid.

The diagonal $\Delta : \mathcal{B} \longrightarrow \mathcal{B} \times \mathcal{B}$ now factors trough $\mathcal{PB}$ as the strict morphism $R : \mathcal{B} \longrightarrow \mathcal{PB}$, which
\begin{itemize}
\item{sends a 0-cell $A$ to $1_{A} : A \longrightarrow A$,}
\item{sends a 1-cell $f : A \longrightarrow B$ to $(f, \phi, f)$, with $\phi : f * 1_{A} \longrightarrow 1_{B} * f$ canonical}
\item{and sends a 2-cell $\alpha : f \longrightarrow g$ to $(\alpha, \alpha)$,}
\end{itemize}
followed by the strict morphism $\langle S, T \rangle : \mathcal{B} \longrightarrow \mathcal{PB}$, which
\begin{itemize}
\item{sends a 0-cell $a : A \longrightarrow A'$ to $(A, A')$,}
\item{sends a 1-cell $(f, \phi, f')$ to $(f, f')$}
\item{and sends a 2-cell $(\alpha, \alpha')$ to $(\alpha, \alpha')$.}
\end{itemize}
We leave it to the reader to verify that $R$ and $\langle S, T \rangle$ satisfy the necessary conditions.
\end{proof}

The following Lemma collects some miscellaneous results, to be used in Lemma \ref{lem10}.

\begin{lem}
\hfill
\begin{description}
\item[(1)] Trivial fibrations are closed under pullback.
\item[(2)] For every bigroupoid $\mathcal{B}$, the unique morphism $\mathcal{B} \longrightarrow 1$ is a fibration.
\item[(3)] Every split monomorphism is a cofibration.
\end{description}
\end{lem}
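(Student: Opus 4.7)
The plan is to verify each of the three claims by unpacking definitions, drawing on the pullback construction of Lemma \ref{lem4}, on Lemma \ref{lem11}, and on standard facts about the canonical model structure on groupoids.

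For \textbf{(1)}, I start from a trivial fibration $(F,\phi):\mathcal{B}\to\mathcal{C}$ and any $(G,\gamma):\mathcal{D}\to\mathcal{C}$, and form the pullback $(P,\pi):\mathcal{A}\to\mathcal{D}$ as constructed in Lemma \ref{lem4}. Lemma \ref{lem11}(3) already tells me that $P$ is a (strict) fibration, so it remains to check that $P$ is a weak equivalence. The 0-cell condition is immediate: given $D\in\mathcal{D}_0$, surjectivity of $F$ on 0-cells produces $B\in\mathcal{B}_0$ with $FB=GD$, hence an $A\in\mathcal{A}_0$ with $PA=D$ and $RA=B$. Locally, $\mathcal{A}(A,A')$ was defined as the pullback, in groupoids, of $F_{B,B'}$ along $G_{D,D'}$. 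Since $F$ is locally a trivial fibration of groupoids, and trivial fibrations in the canonical model structure on groupoids are pullback-stable, $P_{A,A'}$ is again a trivial fibration of groupoids, hence in particular an equivalence of categories.

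For \textbf{(2)}, let $1$ denote the terminal bigroupoid, whose unique 0-cell carries a unique (identity) 1-cell and a unique (identity) 2-cell. Both conditions of Definition \ref{dfn1} then have trivial solutions: in condition (1), the only candidate 1-cell $b$ to be lifted is an identity, and $1_{A'}$ does the job; in condition (2), the only 2-cell $\beta$ is an identity, and $\mathrm{id}_{a'}$ works.

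For \textbf{(3)}, suppose $(K,\kappa):\mathcal{A}\to\mathcal{B}$ admits a retraction $(L,\lambda):\mathcal{B}\to\mathcal{A}$, meaning $(L,\lambda)\circ(K,\kappa)=(\mathrm{id},\mathrm{id})$. Expanding the composition formula forces $L\circ K=\mathrm{id}$ on 0-cells and $L_{KA,KA'}\circ K_{A,A'}=\mathrm{id}$ on hom-groupoids. From the first, $KA=KA'$ implies $A=L(KA)=L(KA')=A'$, giving injectivity on 0-cells; from the second, $K_{A,A'}$ has a left inverse as a functor and is therefore injective on objects (indeed also faithful). These are precisely the two conditions of Definition \ref{dfn2}. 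I do not expect any of the three parts to be the main obstacle; the only place that genuinely uses nontrivial input is \textbf{(1)}, where one invokes pullback-stability of trivial fibrations of groupoids, but Lemma \ref{lem4} has already done the required bookkeeping.
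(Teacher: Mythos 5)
Your argument is correct throughout, but for part \textbf{(1)} you take a genuinely different route from the paper. The paper's proof is the one-line observation that the trivial fibrations form the right class of the (cofibration, trivial fibration) weak factorization system established earlier, and right classes of weak factorization systems are automatically closed under whatever pullbacks exist (one pastes lifting squares onto the pullback square); existence of the relevant pullback is supplied by Lemma \ref{lem4} since a trivial fibration is in particular a fibration. You instead verify the claim directly from the explicit construction of Lemma \ref{lem4}: the 0-cell part of the pullback is a pullback of sets, so surjectivity of $F$ on 0-cells (which holds for a trivial fibration, per the remark following Definition \ref{dfn3}) passes to $P$, and the hom-groupoids are pullbacks of groupoids, so local pullback-stability of trivial fibrations in the canonical model structure on groupoids gives that each $P_{A,A'}$ is an equivalence; combined with Lemma \ref{lem11}\textbf{(3)} this makes $P$ a trivial fibration. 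Both arguments are sound and both ultimately lean on Lemma \ref{lem4} for existence; the paper's is shorter and purely formal, while yours has the minor virtue of not invoking the lifting characterization and of making visible exactly which pieces of the construction carry the weak-equivalence data. Your treatments of \textbf{(2)} and \textbf{(3)} are the same routine unpacking of Definitions \ref{dfn1} and \ref{dfn2} that the paper leaves as ``straightforward.''
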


\begin{proof}
Straightforward. For \textbf{(1)}, note that the trivial fibrations form the right class of a weak factorization system.
\end{proof}

The following argument is originally due to Brown \cite{MR0341469}.

\begin{lem} \label{lem10}
Let $(F, \phi) : \mathcal{A} \longrightarrow \mathcal{C}$ be a morphism of bigroupoids. Then there exists a factorization 
\begin{equation*}
\mathcal{A} \xrightarrow{(G, \psi)} \mathcal{B} \xrightarrow{(H, \eta)} \mathcal{C}
\end{equation*}
of $F$, where $G$ is a trivial cofibration and $H$ is a fibration.
\end{lem}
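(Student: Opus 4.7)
The plan is to carry out Brown's factorization argument, using the path object provided by Lemma \ref{lem7} and the existence of pullbacks along fibrations from Lemma \ref{lem4}.

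Take a path object $\mathcal{C} \xrightarrow{R} \mathcal{PC} \xrightarrow{\langle S, T \rangle} \mathcal{C} \times \mathcal{C}$ for $\mathcal{C}$, noting that $SR = TR = \mathrm{id}_{\mathcal{C}}$. First observe that $S$ is a trivial fibration: since projection $\pi_{1} : \mathcal{C} \times \mathcal{C} \longrightarrow \mathcal{C}$ is a pullback of $\mathcal{C} \longrightarrow 1$ (a fibration) along itself, Lemma \ref{lem11} makes $\pi_{1}$ a fibration, hence so is $S = \pi_{1} \circ \langle S, T \rangle$; and since $SR = \mathrm{id}$ and $R$ is a weak equivalence, 2-out-of-3 (Lemma \ref{lem8}) forces $S$ to be a weak equivalence too. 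Identically for $T$.

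Now form the pullback of the fibration $S$ along $F$, which exists by Lemma \ref{lem4}:
\begin{equation*}
\begin{tikzcd}[row sep=huge, column sep=huge]
\mathcal{B} \arrow[r, "{(N, \nu)}"] \arrow[d, swap, "{(M, \mathrm{id})}"] & \mathcal{PC} \arrow[d, "{(S, \mathrm{id})}"] \\
\mathcal{A} \arrow[r, swap, "{(F, \phi)}"] & \mathcal{C}
\end{tikzcd}
\end{equation*}
where $M$ is strict and, being the pullback of the trivial fibration $S$, is itself a trivial fibration. Let $(G, \psi) : \mathcal{A} \longrightarrow \mathcal{B}$ be the unique morphism obtained from the universal property applied to $\mathrm{id}_{\mathcal{A}}$ and $(R, \mathrm{id}) \circ (F, \phi)$, which fit because $SRF = F$. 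Then $M \circ G = \mathrm{id}$, so $G$ is a split monomorphism and hence a cofibration; and $\mathrm{id} = MG$ and $M$ are weak equivalences, so by 2-out-of-3 so is $G$. Thus $G$ is a trivial cofibration.

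Set $(H, \eta) = (T, \mathrm{id}) \circ (N, \nu)$. Then $H \circ G = T \circ N \circ G = T \circ R \circ F = F$ on the nose, giving the desired factorization up to the coherence isomorphisms inherited from composition. It remains to check that $H$ is a fibration. Note that $\mathcal{B}$ is also a pullback of $\langle S, T \rangle$ along $F \times \mathrm{id}_{\mathcal{C}}$: 0-cells of $\mathcal{B}$ are pairs $(A, p)$ with $FA = Sp$, and this identifies canonically with triples $((A, c), p)$ satisfying $(FA, c) = (Sp, Tp)$. Since $\langle S, T \rangle$ is a fibration, Lemma \ref{lem11}(3) (and the construction of Lemma \ref{lem4}) shows the induced map $\langle M, H \rangle : \mathcal{B} \longrightarrow \mathcal{A} \times \mathcal{C}$ is a fibration. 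Finally, $\pi_{2} : \mathcal{A} \times \mathcal{C} \longrightarrow \mathcal{C}$ is a fibration (again a pullback of $\mathcal{A} \longrightarrow 1$), and $H = \pi_{2} \circ \langle M, H \rangle$ is therefore a composite of fibrations, hence a fibration by Lemma \ref{lem11}(1). The main obstacle I anticipate is bookkeeping: making sure the coherence 2-isomorphisms $\psi$ and $\eta$ furnished by the universal properties in fact compose correctly to $(F, \phi)$ rather than just agreeing on underlying data, which relies crucially on the uniqueness clauses in Lemmas \ref{lem4} and \ref{lem5}.
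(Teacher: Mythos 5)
Your proposal is correct and follows essentially the same route as the paper: both are Brown's factorization argument, pulling back the fibration $S$ of the path object $\mathcal{PC}$ along $F$, obtaining the section $G$ from the universal property, and recognizing $H$ as a fibration via the pullback lemma applied to $\langle S, T\rangle$. The only cosmetic differences are notational ($M, N$ for the paper's $P, Q$) and that you spell out a couple of steps the paper leaves implicit, such as $\pi_2$ being a fibration.
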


\begin{proof}
Since the unique morphism $\mathcal{C} \longrightarrow 1$ is a fibration and fibrations are closed under pullback, the two projections $\mathcal{C} \times \mathcal{C} \longrightarrow \mathcal{C}$ are fibrations as well. Since fibrations are closed under composition, it follows that $S : \mathcal{PC} \longrightarrow \mathcal{C}$ (with $\begin{tikzcd}[column sep=large]
\mathcal{C} \arrow[r, "R" description] &[-15pt] \mathcal{PC} \arrow[r, "{\langle S, T \rangle}" description]  & \mathcal{C} \times \mathcal{C}
\end{tikzcd}$ as in Lemma \ref{lem7}) is a fibration. We can therefore take the pullback of $S$ along $F$ and apply its universal property, as depicted below 
\begin{equation*}
\begin{tikzcd}[row sep=huge, column sep=huge]
\mathcal{A} \arrow[dr, dashed, "\exists ! G"] \arrow[ddr, bend right, swap, "\mathrm{id}"] \arrow[drr, bend left, "R \circ F"] & & \\
& \mathcal{B} \arrow[r, "Q"] \arrow[d, swap, "P"] & \mathcal{PC} \arrow[d, "S"] \\
& \mathcal{A} \arrow[r, swap, "F"] & \mathcal{C}
\end{tikzcd}
\end{equation*}
Since $S \circ R = \mathrm{id}$ and $R$ is a weak equivalence, 2-out-of-3 implies that $S$ is a weak equivalence and hence a trivial fibration. These are stable under pullback, so $P$ is a trivial fibration as well. The equality $P \circ G = \mathrm{id}$ then shows that $G$ is a weak equivalence, by 2-out-of-3. It also shows that $G$ is a split monomorphism and therefore a (trivial) cofibration. Defining $H = T \circ Q$ yields a factorization $F = H \circ G$. The square
\begin{equation*}
\begin{tikzcd}[row sep=huge, column sep=huge]
\mathcal{B} \arrow[r, "Q"] \arrow[d, swap, "{\langle P, H \rangle}"] & \mathcal{PC} \arrow[d, "{\langle S, T \rangle}"] \\
\mathcal{A} \times \mathcal{C} \arrow[r, swap, "F \times \mathrm{id}"] & \mathcal{C} \times \mathcal{C}
\end{tikzcd}
\end{equation*}
exhibits $\langle P, H \rangle$ as a pullback (by the pullback Lemma) of the fibration $\langle S, T \rangle$, which implies that $H$ is a fibration as well.
\end{proof}

With this, Proposition \ref{prop} is proven, which also finishes the proof of Theorem \ref{mainthm}.

\begin{rmk}
Note that the only place where we seem to make essential use of the fact that we are working with \textit{bigroupoids} and not \textit{bicategories} is Lemma \ref{isolift}. It is quite possible that this may be adapted somehow, resulting in a model structure on the category of (small) bicategories and pseudofunctors.
\end{rmk}

\appendix

\section{Coherence for AU-bigroupoids}

In this section we prove a coherence theorem for `AU-bigroupoids' (Definition \ref{audef}). This is an intermediate step in the proof a coherence theorem for bigroupoids. Our approach closely follows that of \cite{MR723395}, which is in turn based on \cite{MR641327}.

\begin{dfn} \label{audef}
An \textit{associative unital bigroupoid} or \textit{AU-bigroupoid} is a bigroupoid in which the natural isomorphisms $\mathbf{a}$, $\mathbf{l}$ and $\mathbf{r}$ are identities.
\end{dfn}

\begin{rmk} \label{idrmk}
Since identity 1-cells are strict in an AU-bigroupoid, the 2-cells $\alpha : f \longrightarrow g$ and $\alpha * \mathrm{id} : f * 1 \longrightarrow g * 1$ are identical. If it is not clear why a certain diagram commutes, it may sometimes prove helpful to introduce such a `missing' 1.
\end{rmk}

The following Lemma is a result of the fact that in an adjoint equivalence, the two triangle identities imply one another.

\begin{lem} \label{triidlem}
Let $\mathcal{B}$ be a AU-bigroupoid. Then for every 1-cell $f$ of $\mathcal{B}$ the following two diagrams commute
\begin{equation} \label{triid}
\begin{tikzcd}[row sep=huge, column sep=huge]
f \arrow[r, "\mathbf{i} * \mathrm{id}"] \arrow[dr, swap, "\mathrm{id}"] & f f^{*} f \arrow[d, "\mathrm{id} * \mathbf{e}"] & f^{*} \arrow[r, "\mathrm{id} * \mathbf{i}"] \arrow[dr, swap, "\mathrm{id}"] & f^{*} f f^{*} \arrow[d, "\mathbf{e} * \mathrm{id}"] \\
& f & & f^{*}
\end{tikzcd}
\end{equation}
\end{lem}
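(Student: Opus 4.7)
The first diagram will follow immediately: in an AU-bigroupoid the isomorphisms $\mathbf{a}, \mathbf{l}, \mathbf{r}$ are identities, so axiom (\ref{coh3}) collapses directly to exactly the commutativity stated by the first triangle. All real work goes into the second. Writing
\[
\sigma \;:=\; (\mathbf{e} * \mathrm{id}_{f^*}) \circ (\mathrm{id}_{f^*} * \mathbf{i}) \;:\; f^* \longrightarrow f^*
\]
for its left-hand side, the plan is to prove $\sigma = \mathrm{id}_{f^*}$ in two stages: first show $\sigma * \mathrm{id}_f = \mathrm{id}_{f^*f}$, and then descend to $\sigma = \mathrm{id}_{f^*}$ using faithfulness of right-whiskering by $ff^*$.

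For the first stage I will expand $\sigma * \mathrm{id}_f$ using interchange and the strict associativity of an AU-bigroupoid to obtain
\[
\sigma * \mathrm{id}_f \;=\; (\mathbf{e} * \mathrm{id}_{f^*f}) \circ \bigl(\mathrm{id}_{f^*} * (\mathbf{i} * \mathrm{id}_f)\bigr),
\]
then use the already-established first triangle, rearranged via the automatic invertibility of $\mathbf{i}, \mathbf{e}$ as 2-cells in a groupoid, to replace $\mathbf{i} * \mathrm{id}_f$ by $\mathrm{id}_f * \mathbf{e}^{-1}$. A further interchange compresses the whole composite into the single horizontal composite $\mathbf{e} * \mathbf{e}^{-1}$, and one last interchange combined with the AU unit laws from Remark \ref{idrmk} rewrites this as the vertical composite $\mathbf{e}^{-1} \circ \mathbf{e} = \mathrm{id}_{f^*f}$. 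For the second stage, right-whisker the resulting equation once more by $\mathrm{id}_{f^*}$ to get $\sigma * \mathrm{id}_{ff^*} = \mathrm{id}_{f^*} * \mathrm{id}_{ff^*}$, and observe that the 2-cells $\mathrm{id}_h * \mathbf{i}$ assemble into a natural isomorphism from the identity functor of $\mathcal{B}(B, A)$ to the endofunctor $h \mapsto h * ff^*$. The latter is therefore naturally isomorphic to the identity and in particular faithful on 2-cells, which yields $\sigma = \mathrm{id}_{f^*}$.

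The hard part is the descent step: by itself $\sigma * \mathrm{id}_f = \mathrm{id}_{f^*f}$ does not force $\sigma$ to be the identity, and naively trying to conjugate by $\mathbf{e}$ to recover $\sigma$ would demand exactly the sort of compatibility that is tantamount to the second triangle identity. The way out is that one half of the would-be equivalence $ff^* \simeq 1$ is already encoded in the bigroupoid structure by $\mathbf{i}$, independently of the second triangle identity; that half alone is enough to make right-whiskering by $ff^*$ naturally isomorphic to the identity functor of $\mathcal{B}(B, A)$, and any endofunctor so related to the identity is automatically faithful.
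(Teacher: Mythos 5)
Your proof is correct, but it reaches the right-hand triangle of (\ref{triid}) by a genuinely different mechanism than the paper. Writing $\sigma = (\mathbf{e} * \mathrm{id}) \circ (\mathrm{id} * \mathbf{i})$ for the composite in question, the paper builds a single diagram whose perimeter expresses $\sigma \circ \sigma = \sigma$ (using the whiskered left triangle, the interchange identity $\mathrm{id} * \mathbf{i} * \mathbf{i}$, and naturality of $\mathbf{e}$) and then cancels $\sigma$ using only the fact that every 2-cell is invertible. You instead whisker: you show $\sigma * \mathrm{id}_f = \mathrm{id}_{f^*f}$ by rewriting $\mathbf{i} * \mathrm{id}_f$ as $\mathrm{id}_f * \mathbf{e}^{-1}$ via the left triangle and collapsing everything to $\mathbf{e} * \mathbf{e}^{-1} = \mathbf{e}^{-1} \circ \mathbf{e}$ through interchange and Remark \ref{idrmk}, and then you cancel the whiskering by observing that $- * (ff^*)$ is naturally isomorphic to the identity functor via $\mathrm{id}_h * \mathbf{i}$ and hence faithful. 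Both are standard proofs that one triangle identity of an adjoint equivalence implies the other. Your route requires the auxiliary faithfulness observation -- essentially a one-sided fragment of the paper's Lemma \ref{eqlem}, which the paper only states afterwards -- but you correctly derive it from $\mathbf{i}$ alone, so there is no circularity; the paper's route is marginally more economical in that it needs nothing beyond invertibility of 2-cells. Your closing remark about why naive conjugation by $\mathbf{e}$ would be circular is accurate and shows you identified the genuine difficulty.
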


\begin{proof}
Commutativity of the left triangle of (\ref{triid}) is just the coherence requirement (\ref{coh3}). For the triangle on the right, consider the diagram
\begin{equation*}
\begin{tikzcd}[row sep=huge, column sep=huge]
f^{*} \arrow[r, "\mathrm{id} * \mathbf{i}"] \arrow[d, swap, "\mathrm{id} * \mathbf{i}"] & f^{*} f f^{*} \arrow[d, swap, "\mathrm{id} * \mathbf{i} * \mathrm{id}"] \arrow[dr, "\mathrm{id}"] & \\
f^{*} f f^{*} \arrow[r, "\mathrm{id} * \mathbf{i}"] \arrow[d, swap, "\mathbf{e} * \mathrm{id}"] & f^{*} f f^{*} f f^{*} \arrow[r, "\mathrm{id} * \mathbf{e} * \mathrm{id}"] & f^{*} f f^{*} \arrow[d, "\mathbf{e} * \mathrm{id}"] \\
f^{*} \arrow[r, swap, "\mathrm{id} * \mathbf{i}"] & f^{*} f f^{*} \arrow[r, swap, "\mathbf{e} * \mathrm{id}"] & f^{*}
\end{tikzcd}
\end{equation*}
The top left square of this diagram commutes, as both traversals give $\mathrm{id} * \mathbf{i} * \mathbf{i}$ (using Remark \ref{idrmk}); its top right triangle commutes by the left triangle of (\ref{triid}); and the bottom rectangle commutes by naturality of $\mathbf{e}$. The commutativity of the perimeter of this diagram implies that the composition $(\mathbf{e} * \mathrm{id}) \circ (\mathrm{id} * \mathbf{i})$, of its bottom two components must be the identity.
\end{proof}

The next Lemma is due to the fact that a conjugate pair of natural transformations (i.e. a morphism of adjoints) is already uniquely determined by one of its two components.

\begin{lem} \label{invlem}
Let $\alpha : f \longrightarrow g$ be a 2-cell in a AU-bigroupoid. Then the 2-cell $\alpha^{*} : f^{*} \longrightarrow g^{*}$ is equal to the composite
\begin{equation*}
f^{*} \xrightarrow{\mathbf{e}^{-1} * \mathrm{id}} g^{*} g f \xrightarrow{\mathrm{id} * \alpha^{-1} * \mathrm{id}} g^{*} f f^{*} \xrightarrow{\mathrm{id} * \mathbf{i}^{-1}} g^{*}.
\end{equation*}
\end{lem}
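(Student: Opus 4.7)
The plan is to transform the given composite, call it $\beta$, into $\alpha^{*}$ by a short sequence of rewrites using naturality of the unit $\mathbf{i}$, the interchange law, and the triangle identity from Lemma \ref{triidlem}. Throughout, we exploit the AU-bigroupoid strictness of $\mathbf{a}$, $\mathbf{l}$, $\mathbf{r}$ and the conventions of Remark \ref{idrmk}. Conceptually the reason this composite equals $\alpha^{*}$ is the mate correspondence: in an adjunction, a 2-cell between right adjoints is uniquely determined by its conjugate between left adjoints, and that determination is given by precisely a "unit--paste--counit" formula of this shape.

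The first step is to apply naturality of $\mathbf{i}$ at the 2-cell $\alpha : f \longrightarrow g$, which yields $\mathbf{i}_{g} = (\alpha * \alpha^{*}) \circ \mathbf{i}_{f}$ and hence $\mathbf{i}_{f}^{-1} = \mathbf{i}_{g}^{-1} \circ (\alpha * \alpha^{*})$. Substituting this into the leading factor of $\beta$ and distributing $\mathrm{id}_{g^{*}} * (-)$ across the resulting vertical composite expresses $\beta$ as a four-fold vertical composite, two of whose middle factors are $\mathrm{id}_{g^{*}} * (\alpha * \alpha^{*})$ followed by $\mathrm{id}_{g^{*}} * \alpha^{-1} * \mathrm{id}_{f^{*}}$. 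A single application of the interchange law collapses these into $\mathrm{id}_{g^{*}g} * \alpha^{*}$, since $(\alpha * \alpha^{*}) \circ (\alpha^{-1} * \mathrm{id}_{f^{*}}) = (\alpha \circ \alpha^{-1}) * (\alpha^{*} \circ \mathrm{id}_{f^{*}}) = \mathrm{id}_{g} * \alpha^{*}$.

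A second use of interchange slides $\alpha^{*}$ past $\mathbf{e}_{g}^{-1} * \mathrm{id}_{f^{*}}$, producing
\[
\beta \;=\; \bigl[(\mathrm{id}_{g^{*}} * \mathbf{i}_{g}^{-1}) \circ (\mathbf{e}_{g}^{-1} * \mathrm{id}_{g^{*}})\bigr] \circ \alpha^{*}.
\]
The bracketed expression is exactly the inverse of the right triangle identity for $g$ supplied by Lemma \ref{triidlem}, and therefore equals $\mathrm{id}_{g^{*}}$. Hence $\beta = \alpha^{*}$, as desired. The only real obstacle is the bookkeeping of domains and codomains together with the repeated implicit use of $1 * (-) = (-) = (-) * 1$ and strict associativity when reading each horizontal composite; once these are accepted per Remark \ref{idrmk}, the manipulation is essentially forced by the triangle identities.
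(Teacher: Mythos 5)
Your proof is correct and is essentially the paper's own argument run in mirror image: the paper pastes $\alpha^{*}$ against the composite $(\mathbf{e}*\mathrm{id})\circ(\mathrm{id}*\mathbf{i})$ using the triangle identity for $f$ and naturality of $\mathbf{e}$, while you rewrite the composite directly using the triangle identity for $g$ and naturality of $\mathbf{i}$; in both cases the content is Lemma \ref{triidlem} plus one naturality square plus interchange. No gaps.
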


\begin{proof}
Consider the diagram
\begin{equation*}
\begin{tikzcd}[row sep=huge, column sep=huge]
f^{*} \arrow[r, "\mathrm{id} * \mathbf{i}"] \arrow[d, swap, "\alpha^{*}"] & f^{*} f f^{*} \arrow[r, "\mathrm{id}"] \arrow[d, swap, "\alpha^{*} * \mathrm{id}"] & f^{*} f f^{*} \arrow[r, "\mathbf{e} * \mathrm{id}"] \arrow[d, swap, "\alpha^{*} * \alpha * \mathrm{id}"] & f^{*} \arrow[d, "\mathrm{id}"] \\
g^{*} \arrow[r, swap, "\mathrm{id} * \mathbf{i}"] & g^{*} f f^{*} \arrow[r, swap, "\mathrm{id} * \alpha * \mathrm{id}"] & g^{*} g f^{*} \arrow[r, swap, "\mathbf{e} * \mathrm{id}"] & f^{*}
\end{tikzcd}
\end{equation*}
It is not difficult to see that the left and middle squares of this diagram commute. Since its rightmost square commutes by naturality of $\mathbf{e}$, the perimeter of the diagram commutes as well. The Lemma now follows by noting that the composition $(\mathbf{e} * \mathrm{id}) \circ \mathrm{id} \circ (\mathrm{id} * \mathbf{i})$, of the top three components of the perimeter is equal to the identity by Lemma \ref{triidlem}.
\end{proof}

\begin{dfn}
Let $\mathcal{B}$ be a AU-bigroupoid. Then for every 1-cell $f$ of $\mathcal{B}$ we define the 2-cell
\begin{equation*}
\mathbf{u}_{f} : f^{**} \longrightarrow f
\end{equation*}
to be the composite
\begin{equation*}
f^{**}  \xrightarrow{\mathrm{id} * \mathbf{e}^{-1}} f^{**} f^{*} f \xrightarrow{\mathbf{e} * \mathrm{id}} f .
\end{equation*}
\end{dfn}

\begin{lem} \label{ulem1}
Let $\mathcal{B}$ be a AU-bigroupoid. Then for every 1-cell $f$ of $\mathcal{B}$ the following two diagrams commute
\begin{equation*}
\begin{tikzcd}[row sep=huge, column sep=huge]
1 \arrow[r, "\mathbf{e}^{-1}"] \arrow[dr, swap, "\mathbf{i}"] & f^{**} f^{*} \arrow[d, "\mathbf{u} * \mathrm{id}"] & 1 \arrow[r, "\mathbf{i}"] \arrow[dr, swap, "\mathbf{e}^{-1}"] & f^{*} f^{**} \arrow[d, "\mathrm{id} * \mathbf{u}"] \\
& f f^{*} & & f^{*} f
\end{tikzcd}
\end{equation*}
\end{lem}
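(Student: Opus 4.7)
My plan is to expand the definition of $\mathbf{u}_f$ in each triangle and collapse the resulting vertical composite using the triangle identities of Lemma \ref{triidlem}, exploiting throughout that in an AU-bigroupoid one may reason strictly: associators and unitors vanish, so the interchange law takes its strict form and one may freely identify $\alpha * \mathrm{id}_1$ with $\alpha$ (cf.\ Remark \ref{idrmk}).

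The key intermediate step is a whiskering swap. The right-hand triangle identity of Lemma \ref{triidlem}, namely $(\mathbf{e}_f * \mathrm{id}_{f^*}) \circ (\mathrm{id}_{f^*} * \mathbf{i}_f) = \mathrm{id}_{f^*}$, is an equality of vertical composites; inverting both sides and using the functoriality of horizontal composition produces
\begin{equation*}
\mathbf{e}_f^{-1} * \mathrm{id}_{f^*} \;=\; \mathrm{id}_{f^*} * \mathbf{i}_f.
\end{equation*}
For the left triangle, expand $\mathbf{u}_f * \mathrm{id}_{f^*}$ to $(\mathbf{e}_{f^*} * \mathrm{id}_{ff^*}) \circ (\mathrm{id}_{f^{**}} * (\mathbf{e}_f^{-1} * \mathrm{id}_{f^*}))$, substitute the swap into the middle factor, and regroup. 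The result is $(\mathbf{e}_{f^*} * \mathrm{id}_{ff^*}) \circ (\mathrm{id}_{f^{**}f^*} * \mathbf{i}_f)$, which by strict interchange equals the single horizontal composite $\mathbf{e}_{f^*} * \mathbf{i}_f$, and the AU hypothesis lets one rewrite this as the vertical composite $\mathbf{i}_f \circ \mathbf{e}_{f^*}$. Pre-composing with $\mathbf{e}_{f^*}^{-1}$ yields $\mathbf{i}_f$.

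The right triangle is dual. Apply Lemma \ref{triidlem} with $f^*$ in place of $f$ to obtain the analogous swap $\mathrm{id}_{f^*} * \mathbf{e}_{f^*} = \mathbf{i}_{f^*}^{-1} * \mathrm{id}_{f^*}$. Expanding $\mathrm{id}_{f^*} * \mathbf{u}_f$, substituting and regrouping in the same pattern reduces the composite to $\mathbf{i}_{f^*}^{-1} * \mathbf{e}_f^{-1} = \mathbf{e}_f^{-1} \circ \mathbf{i}_{f^*}^{-1}$; post-composing with $\mathbf{i}_{f^*}$ then gives $\mathbf{e}_f^{-1}$, as required.

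There is no real conceptual obstacle here, only bookkeeping: one must keep track of the side on which each whiskering is performed, identify units of composite 1-cells (e.g.\ $\mathrm{id}_{ff^*} = \mathrm{id}_f * \mathrm{id}_{f^*}$), and recognise when strict interchange can convert a horizontal composite of two 2-cells passing through $1$ into their vertical composite. All of these manipulations are licensed by the AU hypothesis together with Remark \ref{idrmk}.
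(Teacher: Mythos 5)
Your argument is correct and takes essentially the same route as the paper: both proofs rest on the triangle identities of Lemma \ref{triidlem}, the definition of $\mathbf{u}$, and strict interchange via Remark \ref{idrmk}. The paper packages the computation as a commuting diagram (an interchange square glued to a whiskered triangle identity, with the bottom path recognized as $\mathbf{u}^{-1} * \mathrm{id}$), whereas you run the equivalent equational calculation on $\mathbf{u} * \mathrm{id}$ directly after extracting the ``swap'' $\mathbf{e}_f^{-1} * \mathrm{id}_{f^*} = \mathrm{id}_{f^*} * \mathbf{i}_f$; the content is the same.
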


\begin{proof}
We shall only concern ourselves with proving the commutativity of the left triangle. The triangle on the right is susceptible to a similar approach. Consider the diagram
\begin{equation*}
\begin{tikzcd}[row sep=huge, column sep=huge]
1 \arrow[r, "\mathbf{e}^{-1}"] \arrow[d, swap, "\mathbf{i}"] & f^{**} f^{*} \arrow[d, swap, "\mathrm{id} * \mathbf{i}"] \arrow[dr, "\mathrm{id}"] & \\
f f^{*} \arrow[r, swap, "\mathbf{e}^{-1} * \mathrm{id}"] & f^{**} f^{*} f f^{*} \arrow[r, swap, "\mathrm{id} * \mathbf{e} * \mathrm{id}"] & f^{**} f^{*}
\end{tikzcd}
\end{equation*}
The left square of this diagram commutes, as both traversals give $\mathbf{e}^{-1} * \mathbf{i}$ (using Remark \ref{idrmk}). The triangle in the right half of the diagram commutes by Lemma \ref{triidlem}. Since the composition, $(\mathrm{id} * \mathbf{e} * \mathrm{id}) \circ (\mathbf{e}^{-1} * \mathrm{id})$, of the bottom two components of the diagram is by definition equal to $\mathbf{u}^{-1} * \mathrm{id}$, we are done.
\end{proof}

\begin{lem} \label{ulem2}
Let $\mathcal{B}$ be a AU-bigroupoid. Then for every 1-cell $f$ of $\mathcal{B}$ the following diagram commutes
\begin{equation*}
\begin{tikzcd}[row sep=huge, column sep=huge]
f^{***} f^{**} \arrow[r, "\mathbf{u} * \mathbf{u}"] \arrow[dr, swap, "\mathbf{e}"] & f^{*} f \arrow[d, "\mathbf{e}"] \\
& 1
\end{tikzcd}
\end{equation*}
\end{lem}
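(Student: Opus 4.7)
The plan is to reduce the desired identity to two instances of Lemma~\ref{ulem1} by exploiting the interchange law for horizontal composition. First, I will decompose
\[
\mathbf{u}_{f^*} * \mathbf{u}_f \;=\; (\mathrm{id}_{f^*} * \mathbf{u}_f) \circ (\mathbf{u}_{f^*} * \mathrm{id}_{f^{**}}),
\]
so that, after pre-composing with $\mathbf{e}_f$, the target equation becomes
\[
\mathbf{e}_f \circ (\mathrm{id}_{f^*} * \mathbf{u}_f) \circ (\mathbf{u}_{f^*} * \mathrm{id}_{f^{**}}) \;=\; \mathbf{e}_{f^{**}}.
\]
This exhibits the triangle in the statement as the pasting of two subtriangles, each of which is essentially a triangle from Lemma~\ref{ulem1}.

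Next, I will apply the right triangle of Lemma~\ref{ulem1} (to the 1-cell $f$), which reads $(\mathrm{id}_{f^*} * \mathbf{u}_f) \circ \mathbf{i}_{f^*} = \mathbf{e}_f^{-1}$; rearranging yields $\mathbf{e}_f \circ (\mathrm{id}_{f^*} * \mathbf{u}_f) = \mathbf{i}_{f^*}^{-1}$. I will then apply the left triangle of Lemma~\ref{ulem1} to the 1-cell $f^*$ in place of $f$, which reads $\mathbf{i}_{f^*} = (\mathbf{u}_{f^*} * \mathrm{id}_{f^{**}}) \circ \mathbf{e}_{f^{**}}^{-1}$; rearranging yields $\mathbf{u}_{f^*} * \mathrm{id}_{f^{**}} = \mathbf{i}_{f^*} \circ \mathbf{e}_{f^{**}}$. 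Substituting both rearrangements into the displayed composite and cancelling $\mathbf{i}_{f^*}^{-1} \circ \mathbf{i}_{f^*} = \mathrm{id}$ gives $\mathbf{e}_{f^{**}}$, as required.

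The main obstacle is purely notational bookkeeping: the same symbols $\mathbf{e}$, $\mathbf{i}$, $\mathbf{u}$ appear throughout indexed by $f$, $f^*$, and $f^{**}$, and at each step one must identify which instance of Lemma~\ref{ulem1} is being used and which 1-cell has been substituted for the ``$f$'' in that lemma. Once this bookkeeping is in place, the argument is a direct two-step diagram chase with no further coherence input required.
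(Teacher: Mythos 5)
Your proof is correct and takes essentially the same route as the paper: the paper's proof reads the identity off a single pasted diagram whose two triangles are exactly your two instances of Lemma \ref{ulem1} (the left triangle applied to $f^{*}$ and the right triangle applied to $f$), with the bottom edge being the interchange decomposition $(\mathrm{id}_{f^{*}} * \mathbf{u}_{f}) \circ (\mathbf{u}_{f^{*}} * \mathrm{id}_{f^{**}})$ of $\mathbf{u} * \mathbf{u}$.
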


\begin{proof}
This can be read of directly from
\begin{equation*}
\begin{tikzcd}[row sep=huge, column sep=huge]
& 1 \arrow[dl, swap, "\mathbf{e}^{-1}"] \arrow[d, swap, "\mathbf{i}"] \arrow[dr, "\mathbf{e}^{-1}"] & \\
f^{***} f^{**} \arrow[r, swap, "\mathbf{u} * \mathrm{id}"] & f^{*} f^{**} \arrow[r, swap, "\mathrm{id} * \mathbf{u}"] & f^{*} f
\end{tikzcd}
\end{equation*}
which commutes by Lemma \ref{ulem1}.
\end{proof}

\begin{lem} \label{eqlem}
Let $\mathcal{B}$ be an AU-bigroupoid. Let $A, B, C$ and $D$ be 0-cells and let $f : B \longrightarrow C$ be a 1-cell of $\mathcal{B}$. Then the functors $f * - : \mathcal{B}(A, B) \longrightarrow \mathcal{B}(A, C)$ and $- * f : \mathcal{B}(C, D) \longrightarrow \mathcal{B}(B, D)$ are equivalences of categories, with $f^{*} * -$ and $- * f^{*}$ as their respective pseudo inverses.
\end{lem}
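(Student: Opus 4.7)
The plan is to exhibit the two required natural isomorphisms explicitly by whiskering the given $\mathbf{e}$ and $\mathbf{i}$. The key preparatory observation is that in an AU-bigroupoid the associator and unitors are literally identities, so horizontal composition is strictly associative and unital. In particular, for every $g \in \mathcal{B}(A,B)$ we have $(f^* * f) * g = f^* * (f * g)$ and $1_B * g = g$ on the nose, and therefore the composite functor $(f^* * -) \circ (f * -) : \mathcal{B}(A,B) \to \mathcal{B}(A,B)$ coincides with $(f^* f) * -$, while $1_B * - = \mathrm{id}_{\mathcal{B}(A,B)}$.

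Given these identifications, the first half of the equivalence for $f*-$ is immediate: $\mathbf{e}_f : f^* f \to 1_B$ is an isomorphism in the groupoid $\mathcal{B}(B,B)$, so right-whiskering with $-$ gives a natural transformation $\mathbf{e}_f * \mathrm{id}_- : (f^* f) * - \Rightarrow 1_B * -$, whose naturality in the argument is just the interchange law (the functoriality of $\mathbf{C}_{A,B,B}$) and whose invertibility is inherited from $\mathbf{e}_f$. Composing with the strict identifications above yields a natural isomorphism $(f^**-) \circ (f*-) \Rightarrow \mathrm{id}$. The other triangle is symmetric: $\mathbf{i}_f^{-1} : ff^* \to 1_C$ whiskered with $-$ gives a natural isomorphism $(f*-) \circ (f^**-) \Rightarrow \mathrm{id}$ on $\mathcal{B}(A,C)$.

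For the statement about $-*f$, I would repeat the argument with left whiskering. The only point requiring attention is the bookkeeping of sources and targets: since $f : B \to C$ and $f^* : C \to B$, the functor $-*f$ goes $\mathcal{B}(C,D) \to \mathcal{B}(B,D)$ while $-*f^*$ goes $\mathcal{B}(B,D) \to \mathcal{B}(C,D)$, so both composites $(-*f^*) \circ (-*f)$ and $(-*f) \circ (-*f^*)$ are well-defined endofunctors, equal (by strict associativity and unitality) to $- * (ff^*)$ and $- * (f^* f)$ respectively. Whiskering $\mathrm{id}_- * \mathbf{i}_f^{-1}$ and $\mathrm{id}_- * \mathbf{e}_f$ then provides the required natural isomorphisms to the identity.

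There is essentially no hard step: everything reduces to the facts that $\mathbf{e}_f$ and $\mathbf{i}_f$ are natural isomorphisms in the local groupoids and that whiskering preserves natural isomorphisms. The only use of the AU hypothesis beyond the existence of $\mathbf{e}$ and $\mathbf{i}$ is the strictness of the associator and unitors, which is what allows us to identify composites of whiskered functors with single whiskered functors without inserting coherence 2-cells.
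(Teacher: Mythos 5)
Your proposal is correct and is exactly the argument the paper has in mind; the paper's own proof consists of the single word ``Trivial,'' and what you have written is the standard spelling-out of that triviality (strictness of $\mathbf{a}$, $\mathbf{l}$, $\mathbf{r}$ collapses the composites to $(f^*f)*-$, $(ff^*)*-$, $-*(ff^*)$, $-*(f^*f)$, and whiskering $\mathbf{e}_f$ and $\mathbf{i}_f^{-1}$ does the rest, with naturality coming from functoriality of $\mathbf{C}$). The directions of your 2-cells ($\mathbf{e}_f : f^*f \to 1$, $\mathbf{i}_f^{-1} : ff^* \to 1$) and the source/target bookkeeping for $-*f$ are all consistent with the paper's conventions.
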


\begin{proof}
Trivial. 
\end{proof}

\begin{dfn}
Let $\mathcal{B}$ be a AU-bigroupoid. Then for every pair of composable 1-cells
\begin{equation*}
A \overset{f}{\longrightarrow} B \overset{g}{\longrightarrow} C
\end{equation*}
of $\mathcal{B}$, we define
\begin{equation*}
\mathbf{b}_{f,g} : (gf)^{*} \longrightarrow f^{*} g^{*}
\end{equation*}
to be the unique 2-cell making the diagram
\begin{equation*}
\begin{tikzcd}[row sep=huge, column sep=huge]
(gf)^{*} gf \arrow[r, "\mathbf{b} * \mathrm{id}"] \arrow[d, swap, "\mathbf{e}"] & f^{*} g^{*} g f \arrow[d, "\mathrm{id} * \mathbf{e} * \mathrm{id}"]  \\
1 & f^{*} f \arrow[l, "\mathbf{e}"]
\end{tikzcd}
\end{equation*}
commute. The existence and uniqueness of such a 2-cell follows from Lemma \ref{eqlem}.
\end{dfn}

\begin{dfn}
A \textit{graph} $\mathcal{G}$ consists of a set (of \textit{nodes} or \textit{0-cells}) $\mathcal{G}_{0}$ and associates to every pair $A, B \in \mathcal{G}_{0}$ a set $\mathcal{G}(A, B)$ (of \textit{edges} or \textit{1-cells}). The collection of graphs forms a category, with morphisms $F : \mathcal{G} \longrightarrow \mathcal{G}'$ consisting of a function $F : \mathcal{G}_{0} \longrightarrow \mathcal{G}_{0}'$ and functions $F_{A, B} : \mathcal{G}(A, B) \longrightarrow \mathcal{G}'(FA, FB)$ for every pair $A, B \in \mathcal{G}_{0}$.
\end{dfn}

\begin{rmk}
Note that every bigroupoid $\mathcal{B}$ has an underlying graph, formed by its 0- and 1-cells. In fact, this gives rise to a forgetful functor from bigroupoids to graphs, which has an associated free functor if we only consider strict morphisms between bigroupoids. We will not introduce additional notation for the forgetful functor, but instead trust that it will be clear from the context whenever we regard a bigroupoid as a graph. 
\end{rmk}

\begin{lem}
Given a graph $\mathcal{G}$, the free AU-bigroupoid $\mathcal{F}_{a} \mathcal{G}$ on $\mathcal{G}$ exists. We record its universal property:
\begin{itemize}
\item{There exists an inclusion of graphs (the unit of the adjunction), $I_{a} : \mathcal{G} \longrightarrow \mathcal{F}_{a} \mathcal{G}$, such that:}
\item{Given a AU-bigroupoid $\mathcal{B}$ and a morphism $F : \mathcal{G} \longrightarrow \mathcal{B}$ of graphs, there exists a unique strict morphism of bigroupoids $\widetilde{F} : \mathcal{F}_{a} \mathcal{G} \longrightarrow \mathcal{B}$ such that $F = \widetilde{F}I_{a}$.}
\end{itemize}
\end{lem}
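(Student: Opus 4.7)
The plan is to construct $\mathcal{F}_a \mathcal{G}$ explicitly by generators and relations. For the 0-cells, simply take $(\mathcal{F}_a \mathcal{G})_0 = \mathcal{G}_0$. For the 1-cells, let $\Sigma$ be the smallest set of formal symbols containing each edge $e$ of $\mathcal{G}$ and closed under a formal star operation (so $\Sigma$ consists of symbols $e^{*^n}$ for $n \geq 0$, each with its source and target inherited from $e$ in the evident alternating way, and $e^{**}$ is treated as distinct from $e$ since $\mathbf{u}$ need not be an identity). Define a 1-cell from $A$ to $B$ to be a finite composable string $x_1 x_2 \cdots x_n$ in $\Sigma$ with $\mathrm{src}(x_1) = A$, $\mathrm{tgt}(x_n) = B$ (the empty string giving $1_A$). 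Composition is concatenation, which is strictly associative and unital, so the candidates for $\mathbf{a}, \mathbf{l}, \mathbf{r}$ are identities. The star functor $(-)^{*}$ acts on 1-cells by reversing the string and starring each letter.

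For the 2-cells, form the quotient of the collection of formal expressions built up from the generators $\mathrm{id}_f$, $\mathbf{e}_f, \mathbf{e}_f^{-1}, \mathbf{i}_f, \mathbf{i}_f^{-1}$ (one set for each 1-cell $f$) under vertical composition $\circ$ and horizontal composition $-*-$, modulo the smallest congruence generated by: the groupoid axioms (associativity and identities of $\circ$, inverse laws for $\mathbf{e}$ and $\mathbf{i}$); the functoriality axioms for $-*-$ (interchange and preservation of identities); naturality of $\mathbf{e}$ and $\mathbf{i}$; and the triangle identity (\ref{coh3}), which here simplifies to $(\mathrm{id} * \mathbf{e}) \circ (\mathbf{i} * \mathrm{id}) = \mathrm{id}_f$. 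One also imposes the relation defining the action of $(-)^*$ on 2-cells supplied by Lemma \ref{invlem}, so that $(-)^*$ becomes a functor automatically. Verify that with these data, $\mathcal{F}_a \mathcal{G}$ is an AU-bigroupoid: (\ref{coh1}) and (\ref{coh2}) are automatic since $\mathbf{a}, \mathbf{l}, \mathbf{r}$ are identities, while (\ref{coh3}) holds by construction.

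Next, define $I_a : \mathcal{G} \longrightarrow \mathcal{F}_a \mathcal{G}$ as the identity on 0-cells and sending each edge $e$ to the length-one string $e \in \Sigma$. For the universal property, let $F : \mathcal{G} \longrightarrow \mathcal{B}$ be a graph morphism into an AU-bigroupoid $\mathcal{B}$. The strict extension $\widetilde{F}$ is forced on objects by $\widetilde{F}(A) = F(A)$ and on 1-cells by induction: the empty string goes to $1_{F(A)}$, concatenation goes to horizontal composition, and $x^{*}$ goes to $\widetilde{F}(x)^{*}$. On 2-cells, send each generator $\mathbf{e}_f, \mathbf{i}_f$ to the corresponding 2-cell of $\mathcal{B}$ at $\widetilde{F}(f)$ and extend over $\circ$ and $*$; this is well defined on the quotient precisely because every defining relation holds in $\mathcal{B}$ (as $\mathcal{B}$ is an AU-bigroupoid). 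Strictness of $\widetilde{F}$ is immediate from the definition, and uniqueness follows because the generators of both 1- and 2-cells lie in the image of $I_a$ under the required structural operations, which a strict morphism is forced to respect.

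The main obstacle I expect is bookkeeping rather than conceptual: one must verify carefully that horizontal composition and the inversion functor descend to the quotient, that the assignments $f \mapsto \mathbf{e}_f$ and $f \mapsto \mathbf{i}_f$ do assemble into natural isomorphisms (naturality holds by fiat, but one must check that they are indeed well-typed at every 1-cell, including the composite and starred ones), and that the presentation does not inadvertently collapse the 2-cell groupoids beyond what is forced by the AU-bigroupoid axioms. All of this is routine once the generators and relations above are written out; no additional identifications arise beyond those implied by Lemmas \ref{triidlem}--\ref{ulem2}, which follow within any AU-bigroupoid and therefore hold automatically in $\mathcal{F}_a \mathcal{G}$.
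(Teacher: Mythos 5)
There is a genuine gap, and it lies in your definition of the 1-cells. You normalize every 1-cell to a string of iterated-starred generators and then \emph{define} the functor $-^{*}$ on 1-cells as ``reverse the string and star each letter.'' This forces the equations $(w * v)^{*} = v^{*} * w^{*}$ and $1_{A}^{*} = 1_{A}$ to hold \emph{on the nose} in your candidate for $\mathcal{F}_{a}\mathcal{G}$. But these equations fail in a general AU-bigroupoid $\mathcal{B}$: the inversion functor $\mathbf{I}$ is only required to come with the natural isomorphisms $\mathbf{e}$ and $\mathbf{i}$, and $(Fg * Ff)^{*}$ is merely canonically isomorphic to $(Ff)^{*} * (Fg)^{*}$ (via the 2-cell $\mathbf{b}$ of the appendix), not equal to it. Consequently the universal property breaks: strictness forces $\widetilde{F}((gf)^{*}) = (Fg * Ff)^{*}$ and simultaneously $\widetilde{F}(f^{*} * g^{*}) = (Ff)^{*} * (Fg)^{*}$, and since you have declared these to be the \emph{same} 1-cell of $\mathcal{F}_{a}\mathcal{G}$, no strict $\widetilde{F}$ into a general $\mathcal{B}$ exists. (If your object were free, the equation $(wv)^{*} = v^{*} * w^{*}$ would propagate to every AU-bigroupoid, which is false.) The paper's Construction \ref{fracns} avoids this by closing the formal 1-cell expressions under both $-*-$ and $-^{*}$ and quotienting \emph{only} by associativity and unitality of $-*-$, leaving $-^{*}$ entirely free; the normal form you are reaching for is then obtained only up to canonical isomorphism, by the rewriting morphism $R$ and the 2-cells $\mathbf{u}$ and $\mathbf{b}$ of Construction \ref{fcon} --- indeed, if the free object were already normalized as you propose, that entire rewriting apparatus would be vacuous.

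A secondary, smaller point: defining $\alpha^{*}$ on 2-cells by the formula of Lemma \ref{invlem} and asserting that functoriality of $-^{*}$ is then ``automatic'' still requires an argument (that the mate construction is functorial), whereas the paper simply imposes functoriality of $-^{*}$ as part of the generating congruence. Your treatment of the 2-cells is otherwise in the same generators-and-relations spirit as the paper's, and the extension and uniqueness arguments for $\widetilde{F}$ are fine in outline --- but they cannot be repaired without first correcting the 1-cell level as above.
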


\begin{cns} \label{fracns}
We sketch a construction of $\mathcal{F}_{a} \mathcal{G}$ and leave it to the reader to verify that this object has the required universal property.

The 0-cells of $\mathcal{F}_{a} \mathcal{G}$ are the nodes of $\mathcal{G}$. For every node $A$ of $\mathcal{G}$, we add a new edge $1_{A} : A \longrightarrow A$. We formally close the edges under the operations $- * -$ and $-^{*}$, taking into account the sources and targets in the obvious way. We quotient out by the congruence relation generated by the requirements that $- * -$ is associative and $1$ acts as identity. The 1-cells of $\mathcal{F}_{a} \mathcal{G}$ are the equivalence classes under this quotient. 

For every 1-cell $f$ of $\mathcal{FG}$, we create 2-cells $\mathbf{e}_{f}$, $\mathbf{i}_{f}$, $\mathbf{e}_{f}^{-1}$, $\mathbf{i}_{f}^{-1}$ and $\mathrm{id}_{f}$. We close the 2-cells under the operations $- * -$, $-^{*}$ and $- \circ -$ (whenever these operations make sense). We quotient out by the congruence relation generated by the requirements that $- \circ -$ and $- * -$ are associative; $\mathrm{id}$ acts as identity; $- ^{-1}$ acts as inverse; $- * -$ and $-^{*}$ are functors; $\mathbf{e}$ and $\mathbf{i}$ are natural; and lastly that the coherence law (\ref{coh3}) holds. The 2-cells of $\mathcal{F}_{a} \mathcal{G}$ are the equivalence classes under this quotient.
\end{cns}

In a group, we may write the element $((a^{-1})^{-1} b)^{-1}$ more cleanly as $b^{-1} a^{-1}$. We can do something similar by `rewriting' the 1-cells of $\mathcal{F}_{a} \mathcal{G}$ into isomorphic, but easier to handle 1-cells. This rewriting is done systematically by means of a strict morphism of 2-categories, $R$.

\begin{cns} \label{fcon}
We construct a strict morphism of 2-categories $R : \mathcal{F}_{a} \mathcal{G} \longrightarrow \mathcal{F}_{a} \mathcal{G}$ which is the identity on 0-cells, along with a $\mathcal{G}_{0} \times \mathcal{G}_{0}$-indexed family of natural isomorphisms $\rho : \mathrm{id} \Longrightarrow R$ (with $\rho_{A, B} : \mathrm{id}_{A,B} \Longrightarrow F_{A,B}$).

We let $R$ be the identity on 0-cells. We inductively define the action of $R$ on 1-cells simultaneously with the components of $\rho$, making several case distinctions. To make sure this procedure is well-defined, let us agree to delete any superfluous occurrences of $1$, not appearing as $1^{*}$ in every 1-cell $u$ of $\mathcal{F}_{a} \mathcal{G}$ (e.g. if $u = 1^{**} * (f * 1)^{*}$, we write $1^{**} * f^{*}$ instead).
\begin{itemize}
\item{If $u$ is of the form $f, f^{*}$ or $1$, with $f$ in $\mathcal{G}$, then $Ru = u$ and $\rho_{u}$ is given by
\begin{equation*}
u \xrightarrow{\;\; \mathrm{id} \;\;} u = Ru.
\end{equation*}}
\item{If $u$ is of the form $1^{*}$, then $R 1^{*} = 1$ and $\rho_{u}$ is given by
\begin{equation*}
1^{*} = 1^{*} * 1 \xrightarrow{\;\; \mathbf{e} \;\;} 1 = R1^{*}.
\end{equation*}}
\item{If $u$ is of the form $v^{**}$, then $R v^{**} = R v$ and $\rho_{u}$ is given by
\begin{equation*}
v^{**} \xrightarrow{\;\; \mathbf{u} \;\;} v \xrightarrow{\;\; \rho_{v} \;\;} Rv = Rv^{**}.
\end{equation*}}
\item{If $u$ is of the form $w * v$, then $R(w * v) = Rw * Rv$ and $\rho_{u}$ is given by
\begin{equation*}
w * v \xrightarrow{\;\; \rho_{w} * \rho_{v} \;\;} Rw * Rv = R(w * v).
\end{equation*}
Note that this is well-defined with respect to 1-cells of the form $v_{1} * v_{2} * \cdots * v_{n}$.}
\item{If $u$ is of the form $(w*v)^{*}$, then $R (w*v)^{*} = R v^{*} * Rw^{*}$ and $\rho_{u}$ is given by
\begin{equation*}
(w*v)^{*} \xrightarrow{\;\; \mathbf{b} \;\;} v^{*} * w^{*} \xrightarrow{\;\; \rho_{v^{*}} \rho_{w^{*}} \;\;} R v^{*} * Rw^{*} = R (w*v)^{*}.
\end{equation*}}
\end{itemize}
We define $R$ on a 2-cell $\alpha : u \longrightarrow v$ by requiring that the square
\begin{equation*}
\begin{tikzcd}[row sep=huge, column sep=huge]
u \arrow[r, "\rho_{w}"] \arrow[d, swap, "\alpha"] & Ru \arrow[d, "R \alpha"] \\
v \arrow[r, swap, "\rho_{v}"] & Rv
\end{tikzcd}
\end{equation*}
commutes. One easily verifies that $R$ is a strict morphism of 2-categories.
\end{cns}

\begin{lem} \label{flem}
The strict morphism of 2-categories $R : \mathcal{F}_{a} \mathcal{G} \longrightarrow \mathcal{F}_{a} \mathcal{G}$ of Construction \ref{fcon} enjoys the following properties:
\begin{description}
\item[(1)] If $u$ is a 1-cell of $\mathcal{F}_{a} \mathcal{G}$, then $Ru = u$ if and only if $u$ is a composition of 1-cells of the form $f, f^{*}$ and $1$, with $f$ in $\mathcal{G}$.
\item[(2)] If $u$ is a 1-cell of $\mathcal{F}_{a} \mathcal{G}$ and $Ru = u$, then $\rho : u \longrightarrow Ru$ is the identity.
\item[(3)] $R$ is an idempotent biequivalence.
\item[(4)] All 2-cells of the form $R \mathbf{u}$ and $R \mathbf{b}$ are identities.
\end{description}
\end{lem}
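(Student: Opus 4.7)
The plan is to prove the four parts in the order stated, using parts (1) and (2) as the engine for both (3) and (4). Parts (1) and (2) I would prove together by a single induction on the complexity (i.e.\ number of applications of $-*-$ and $-^{*}$) of a 1-cell $u$ of $\mathcal{F}_{a}\mathcal{G}$, mirroring the case analysis of Construction \ref{fcon}. The ``if'' direction of (1) is immediate: the base pieces $f$, $f^{*}$, $1$ are fixed by $R$ and $R(w*v) = Rw*Rv$, so any composite built from those three kinds of pieces is fixed. For the ``only if'' direction, inspection of the cases shows that $1^{*}$, $v^{**}$, and $(w*v)^{*}$ each strictly change the outermost operator, so only the remaining cases can satisfy $Ru = u$. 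Simultaneously, in exactly those ``fixed'' cases the formula for $\rho_{u}$ in Construction \ref{fcon} is a horizontal composite of identities, yielding (2).

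For (3), a second easy induction along the cases of Construction \ref{fcon} shows that $Ru$ is always (irrespective of $u$) a composite of pieces of the form $f$, $f^{*}$, $1$, so by (1) $R R u = R u$; and by (2) each $\rho_{Ru}$ is the identity, which combined with the naturality square defining $R$ on 2-cells forces $R R \alpha = R \alpha$ for every 2-cell $\alpha$, giving idempotence throughout. Biequivalence is then a direct consequence of the construction: $R$ is literally the identity on 0-cells, hence surjective on them, and each $\rho_{A,B}\colon \mathrm{id}_{A,B} \Rightarrow R_{A,B}$ is a natural isomorphism, exhibiting $R_{A,B}$ as an equivalence of the corresponding hom-groupoids.

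Part (4) is the crux and, I expect, the main obstacle: not because it is conceptually hard but because one must unwind several layers of the definition of $\rho$ in exactly the right way. The idea is that the recursive clauses for $\rho_{v^{**}}$ and $\rho_{(w*v)^{*}}$ in Construction \ref{fcon} have been arranged so as to absorb $\mathbf{u}$ and $\mathbf{b}$ respectively. Concretely, for $\mathbf{u}_{f}\colon f^{**}\to f$, the naturality square of $\rho$ reads $R\mathbf{u}_{f}\circ \rho_{f^{**}} = \rho_{f}\circ \mathbf{u}_{f}$, and the construction gives $\rho_{f^{**}} = \rho_{f}\circ \mathbf{u}_{f}$ verbatim; cancelling the (invertible) right-hand composite immediately yields $R\mathbf{u}_{f}=\mathrm{id}$. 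For $\mathbf{b}_{f,g}\colon (gf)^{*}\to f^{*}g^{*}$, the same game works: $\rho_{(gf)^{*}} = (\rho_{f^{*}}*\rho_{g^{*}})\circ \mathbf{b}_{f,g}$ by the $(w*v)^{*}$ clause, while $\rho_{f^{*}g^{*}} = \rho_{f^{*}}*\rho_{g^{*}}$ by the $w*v$ clause, so the naturality square collapses to $R\mathbf{b}_{f,g}\circ \rho_{(gf)^{*}} = \rho_{(gf)^{*}}$ and invertibility of $\rho_{(gf)^{*}}$ gives $R\mathbf{b}_{f,g}=\mathrm{id}$. The only real bookkeeping concern is keeping the identifications $R f^{**}=Rf$ and $R(gf)^{*}=Rf^{*}*Rg^{*}$ straight so that the cancellations take place in the correct hom-groupoid.
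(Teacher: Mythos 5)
Your proof is correct and fills in exactly the ``straightforward check'' that the paper leaves to the reader: the structural inductions for \textbf{(1)}--\textbf{(3)} and the cancellation of the invertible $\rho$ in the defining naturality square for \textbf{(4)} are the intended argument, and the argument for \textbf{(4)} works verbatim for $\mathbf{u}_{v}$ and $\mathbf{b}_{v,w}$ with arbitrary 1-cells, not just edges of $\mathcal{G}$. The only minor point is that the ``only if'' half of \textbf{(1)} (e.g.\ ruling out $Rv^{**}=v^{**}$) already needs the observation you defer to \textbf{(3)} --- that $Ru$ is always a composite of the basic pieces $f$, $f^{*}$, $1$ --- so the two inductions are best run simultaneously.
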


\begin{proof}
A straightforward check.
\end{proof}

\begin{dfn}
A 2-cell of $\mathcal{F}_{a} \mathcal{G}$ is called \textit{simple} if it can be written as $\mathrm{id} * \mathbf{e}_{f} * \mathrm{id}$, $\mathrm{id} * \mathbf{i}_{f} * \mathrm{id}$, $\mathrm{id} * \mathbf{e}_{f}^{-1} * \mathrm{id}$ or $\mathrm{id} * \mathbf{i}_{f}^{-1} * \mathrm{id}$, with $f$ in $\mathcal{G}$. Note that for example $\mathbf{e}_{f}$ and $\mathbf{i}_{f}$ are included in this definition, using Remark \ref{idrmk}.
\end{dfn}

\begin{lem} \label{elem}
For any 1-cell $u$ of $\mathcal{F}_{a} \mathcal{G}$, the 2-cell $R \mathbf{e}_{u}$ is the identity or can be obtained by (vertically) composing finitely many simple 2-cells. 
\end{lem}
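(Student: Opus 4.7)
The plan is to proceed by structural induction on the 1-cell $u$, proving simultaneously that both $R\mathbf{e}_u$ and $R\mathbf{i}_u$ belong to the class $\mathcal{S}$ of 2-cells that are either the identity or a finite vertical composition of simple 2-cells. This strengthening is essential: the induction step at $u = v^{*}$ will use Lemma \ref{ulem1} to turn a question about $R\mathbf{e}_{v^{*}}$ into a question about $R\mathbf{i}_v$ (and symmetrically), so any attempt to induct on $R\mathbf{e}$ alone cannot close. Before starting I would verify that $\mathcal{S}$ is closed under (i) vertical composition (immediate), (ii) inversion (since $(\mathrm{id} * \mathbf{e}_f * \mathrm{id})^{-1} = \mathrm{id} * \mathbf{e}_f^{-1} * \mathrm{id}$ is again simple, and inversion reverses the order of a composite), and (iii) whiskering by identity 2-cells on either side (by associativity of horizontal composition together with the interchange law, which distributes $\mathrm{id} * (-) * \mathrm{id}$ over vertical composites of simple 2-cells).

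The base cases are $u = f$ with $f \in \mathcal{G}$ — for which $Ru = u$, $\rho_u = \mathrm{id}$, and hence $R\mathbf{e}_f = \mathbf{e}_f$ and $R\mathbf{i}_f = \mathbf{i}_f$ are already simple — and $u = 1$, for which the coherence law (\ref{coh3}) specialised to $f = 1$ forces $\mathbf{e}_1 \circ \mathbf{i}_1 = \mathrm{id}$, and unpacking the defining square $R\alpha \circ \rho_u = \rho_v \circ \alpha$ yields $R\mathbf{e}_1 = \mathrm{id} = R\mathbf{i}_1$. For the inductive step with $u = w*v$, the defining property of $\mathbf{b}_{v,w}$ gives
\begin{equation*}
\mathbf{e}_{w*v} = \mathbf{e}_v \circ (\mathrm{id} * \mathbf{e}_w * \mathrm{id}) \circ (\mathbf{b}_{v,w} * \mathrm{id}),
\end{equation*}
and a parallel triangle-identity computation produces the symmetric formula
\begin{equation*}
\mathbf{i}_{w*v} = (\mathrm{id} * \mathbf{b}_{v,w}^{-1}) \circ (\mathrm{id} * \mathbf{i}_v * \mathrm{id}) \circ \mathbf{i}_w.
\end{equation*}
Applying the strict $2$-functor $R$ and invoking Lemma \ref{flem}(4) to kill $R\mathbf{b}$ reduces $R\mathbf{e}_{w*v}$ and $R\mathbf{i}_{w*v}$ to compositions of identity-whiskerings of $R\mathbf{e}_v, R\mathbf{e}_w$ and $R\mathbf{i}_v, R\mathbf{i}_w$ respectively, all of which lie in $\mathcal{S}$ by the induction hypothesis and the closure properties above. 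For $u = v^{*}$, Lemma \ref{ulem1} gives $\mathbf{e}_{v^{*}} = \mathbf{i}_v^{-1} \circ (\mathbf{u}_v * \mathrm{id})$ and, symmetrically, $\mathbf{i}_{v^{*}} = (\mathrm{id} * \mathbf{u}_v^{-1}) \circ \mathbf{e}_v^{-1}$; applying $R$ and using Lemma \ref{flem}(4) to kill $R\mathbf{u}$ collapses these to $R\mathbf{e}_{v^{*}} = R\mathbf{i}_v^{-1}$ and $R\mathbf{i}_{v^{*}} = R\mathbf{e}_v^{-1}$, both in $\mathcal{S}$ by the induction hypothesis applied to the structurally smaller 1-cell $v$.

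The main obstacle is conceptual rather than computational: a naive induction on $R\mathbf{e}$ alone does not close, so one has to recognise up front that $\mathbf{e}$ and $\mathbf{i}$ must be handled as a single entangled system, induct on both in lockstep, and work modulo the closure properties of $\mathcal{S}$. The ``degenerate'' leaves $u = 1^{*}$ and $u = v^{**}$ then need no separate treatment, the former being the $v = 1$ instance of the $u = v^{*}$ case and the latter being the $v = w^{*}$ instance; once the simultaneous induction is set up, each individual case reduces to a two-line calculation that uses nothing beyond Lemmas \ref{ulem1} and \ref{flem}(4) and the three closure properties of $\mathcal{S}$.
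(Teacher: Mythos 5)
Your proof is correct, and it reorganizes the argument in a way that is genuinely different from (and somewhat leaner than) the paper's. The paper keeps the induction on $R\mathbf{e}_u$ alone and pays for this with seven cases ($f$, $f^{*}$, $1$, $1^{*}$, $v^{**}$, $w*v$, $(w*v)^{*}$), each requiring an explicit identification of $\rho_{u^{*}*u}$ and a comparison with Lemma \ref{ulem1} or \ref{ulem2}; the case $u=(w*v)^{*}$ is the longest, needing Lemma \ref{invlem} to unwind $\mathbf{b}^{*}$ and a chain of substitutions. Your simultaneous induction on $R\mathbf{e}_u$ and $R\mathbf{i}_u$ collapses this to four cases, because the single formula $\mathbf{e}_{v^{*}}=\mathbf{i}_v^{-1}\circ(\mathbf{u}_v*\mathrm{id})$ from Lemma \ref{ulem1}, pushed through the strict $R$ with Lemma \ref{flem}\textbf{(4)}, handles $f^{*}$, $1^{*}$, $v^{**}$ and $(w*v)^{*}$ uniformly; your end results agree with the paper's in every case ($R\mathbf{e}_{f^{*}}=\mathbf{i}_f^{-1}$, $R\mathbf{e}_{1^{*}}=\mathrm{id}$, $R\mathbf{e}_{v^{**}}=R\mathbf{e}_v$). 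What the paper's route buys is that it never needs any fact about $\mathbf{i}$ of a composite; what your route buys is the elimination of the $(w*v)^{*}$ computation and of Lemma \ref{invlem} from this proof entirely.

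The one debt your argument incurs is the asserted formula $\mathbf{i}_{w*v}=(\mathrm{id}*\mathbf{b}_{v,w}^{-1})\circ(\mathrm{id}*\mathbf{i}_v*\mathrm{id})\circ\mathbf{i}_w$, which you wave at as ``a parallel triangle-identity computation.'' It is true, but it is not an instance of the defining property of $\mathbf{b}$ (which is stated purely in terms of $\mathbf{e}$); it is the statement that $\mathbf{b}$, defined as the conjugate on the counit side, is also conjugate on the unit side, and it needs the same kind of short diagram chase via Lemma \ref{triidlem} that the paper spends on Lemma \ref{invlem}. So you have not avoided that work, only relocated it; you should write out that chase (or state it as a lemma parallel to Lemma \ref{invlem}). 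Two smaller points: your inference $R\mathbf{e}_1=\mathrm{id}$ follows already from $\rho_{1^{*}*1}=\mathbf{e}_1$ in Construction \ref{fcon}, with the identity $\mathbf{e}_1\circ\mathbf{i}_1=\mathrm{id}$ needed only for the companion claim $R\mathbf{i}_1=\mathrm{id}$; and since 1-cells of $\mathcal{F}_a\mathcal{G}$ are equivalence classes, your structural induction runs over expressions, so you should note (as the paper does by fixing the convention on superfluous occurrences of $1$) that the quantity $R\mathbf{e}_u$ depends only on the class of $u$, so proving the claim for one representative suffices.
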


\begin{proof}
We use induction on the number of symbols in $u$, where we uphold the convention on the appearances of $1$, as in Construction \ref{fcon}. Recall that $R \mathbf{e}_{u}$ is defined by the commutative diagram
\begin{equation*}
\begin{tikzcd}[row sep=huge, column sep=huge]
u^{*} * u \arrow[r, "\rho_{u^{*} * u}"] \arrow[dr, swap, "\mathbf{e}_{u}"]  & Ru^{*} * Ru \arrow[d, "R \mathbf{e}_{u}"] \\
& 1
\end{tikzcd}
\end{equation*}
\begin{itemize}
\item{If $u = f$, for some $f$ of $G$, then $\rho_{u^{*} * u} = \mathrm{id}$, so $R \mathbf{e}_{u} = \mathbf{e}_{f}$.}
\item{If $u = f^{*}$, for some $f^{*}$ of $G$, then $\rho_{u^{*} * u} = \mathbf{u}_{f} * \mathrm{id}$. Comparing this with Lemma \ref{ulem1} yields $R \mathbf{e}_{u} = \mathbf{i}_{f}^{-1}$.}
\item{If $u = 1$, then $\rho_{u^{*} * u} = \mathbf{e}_{1}$, so $R \mathbf{e}_{u} = \mathrm{id}$.}
\item{If $u = 1^{*}$, then $\rho_{u^{*} * u} = \mathbf{u}_{1} * \mathbf{e}_{1}$, which means that the outer square of
\begin{equation*}
\begin{tikzcd}[row sep=huge, column sep=huge]
1^{**} * 1^{*} \arrow[r, "\mathbf{u} * \mathrm{id}"] \arrow[d, swap, "\mathbf{e}"]  & 1 * 1^{*} \arrow[d, "\mathrm{id} * \mathbf{e}"] \arrow[dl, swap, "\mathbf{i}^{-1}"] \\
1 & 1 \arrow[l, "R \mathbf{e}"]
\end{tikzcd}
\end{equation*}
commutes. By Lemma \ref{ulem1} upper left triangle commutes as well, which forces the commutativity of the lower right triangle. Comparing this with Lemma \ref{triidlem} yields $R \mathbf{e}_{u} = \mathrm{id}$.}
\item{If $u = v^{**}$, then $\rho_{u^{*} * u} = \mathbf{u}_{v^{*}} * \mathbf{u}_{v}$. Comparing this with Lemma \ref{ulem2} yields $R \mathbf{e}_{u} = \mathbf{e}_{v}$, for which we may apply the induction hypothesis.}
\item{If $u = w * v$, then by definition of $\mathbf{b}_{v, w}$,
\begin{equation*}
\mathbf{e}_{u} = \mathbf{e}_{w} \circ ( \mathrm{id} * \mathbf{e}_{v} * \mathrm{id}) \circ (\mathbf{b}_{v, w} * \mathrm{id}).
\end{equation*}
By strictness of $R$ and part \textbf{(4)} of Lemma \ref{flem}, the application of $R$ to both sides of this equation gives
\begin{equation*}
R \mathbf{e}_{u} = R \mathbf{e}_{w} \circ ( \mathrm{id} * R \mathbf{e}_{v} * \mathrm{id}),
\end{equation*}
which allows us to use the induction hypothesis.}
\item{If $u = (w * v)^{*}$, then by naturality of $\mathbf{e}$,
\begin{equation*}
\mathbf{e}_{u} = \mathbf{e}_{v^{*} * w^{*}} \circ (\mathbf{b}_{v, w}^{*} * \mathbf{b}_{v, w}),
\end{equation*}
which means that
\begin{equation*}
R \mathbf{e}_{u} = R \mathbf{e}_{v^{*} * w^{*}} \circ ( R \mathbf{b}_{v, w}^{*} * \mathrm{id}).
\end{equation*}
Now, by Lemma \ref{invlem},
\begin{equation*}
\mathbf{b}_{v, w}^{*} = (\mathrm{id} * \mathbf{i}_{(w*v)^{*}}) \circ ( \mathrm{id} * \mathbf{b}_{v, w}^{-1} * \mathrm{id}) \circ ( \mathbf{e}_{v^{*} * w^{*}} * \mathrm{id}),
\end{equation*}
so
\begin{equation*}
R \mathbf{b}_{v, w}^{*} = (\mathrm{id} * R \mathbf{i}_{(w*v)^{*}}) \circ ( R \mathbf{e}_{v^{*} * w^{*}} * \mathrm{id}),
\end{equation*}
Lastly, by Lemma \ref{ulem1},
\begin{equation*}
\mathbf{i}_{(w*v)^{*}} = ( \mathrm{id} * \mathbf{u}_{w * v}^{-1} ) \circ \mathbf{e}_{w * v}^{-1},
\end{equation*}
giving
\begin{equation*}
R \mathbf{i}_{(w*v)^{*}} = R \mathbf{e}_{w * v}^{-1}.
\end{equation*}
By combining the above computations, we obtain
\begin{equation*}
R \mathbf{e}_{u} = R \mathbf{e}_{v^{*} * w^{*}} \circ ((( \mathrm{id} * R \mathbf{e}_{w * v}^{-1} ) \circ ( R \mathbf{e}_{v^{*} * w^{*}} * \mathrm{id}) ) * \mathrm{id} ).
\end{equation*}
We can now treat the occurrences of $R \mathbf{e}_{v^{*} * w^{*}}$ as in the previous step, after which we may apply the induction hypothesis.}
\end{itemize}
\end{proof}

\begin{lem} \label{simplem}
Let $u$ and $v$ be 1-cells of $\mathcal{F}_{a} \mathcal{G}$ such that $Ru = u$ and $Rv = v$. Then any 2-cell $\alpha : u \longrightarrow v$ is the identity or can be obtained by (vertically) composing finitely many simple 2-cells.
\end{lem}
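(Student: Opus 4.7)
The plan is to reduce the statement to a claim about $R\alpha$ and then prove that claim by structural induction on the expression defining $\alpha$. To set up the reduction, note that Lemma \ref{flem}\textbf{(2)} together with the hypotheses $Ru = u$ and $Rv = v$ gives $\rho_u = \mathrm{id}$ and $\rho_v = \mathrm{id}$, so naturality of $\rho$ (the defining square for $R$ on 2-cells in Construction \ref{fcon}) forces $R\alpha = \alpha$. Call a 2-cell \emph{good} if it is either an identity or a finite vertical composite of simple 2-cells; it will then suffice to show that $R\beta$ is good for every 2-cell $\beta$ of $\mathcal{F}_a \mathcal{G}$. Since $R$ is a strict morphism of 2-categories it commutes with $\circ$, $*$ and $-^*$, so the induction reduces to checking that each generator $\mathrm{id}_w$, $\mathbf{e}_w^{\pm 1}$, $\mathbf{i}_w^{\pm 1}$ (for arbitrary 1-cells $w$) maps under $R$ to a good 2-cell, and that the class of good 2-cells is stable under the building operations.

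For the generators, $R\mathrm{id}_w$ is trivially good and $R\mathbf{e}_w$ is good by Lemma \ref{elem}. For $\mathbf{i}_w$, the left triangle of Lemma \ref{ulem1} rewrites it as $\mathbf{i}_w = (\mathbf{u}_w * \mathrm{id}) \circ \mathbf{e}_{w^*}^{-1}$; applying $R$ and invoking $R\mathbf{u} = \mathrm{id}$ from Lemma \ref{flem}\textbf{(4)} reduces the question to $R\mathbf{e}_{w^*}^{-1}$, which is good as soon as one knows inverses of good 2-cells are good. The latter holds because the inverse of a simple 2-cell $\mathrm{id}_p * \mathbf{e}_f^{\pm 1} * \mathrm{id}_q$ is $\mathrm{id}_p * \mathbf{e}_f^{\mp 1} * \mathrm{id}_q$, which is itself simple (and similarly for the $\mathbf{i}$ version), and reversing the order of a vertical composite of simple 2-cells still leaves such a composite.

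For the inductive closure, the $\circ$ case is immediate. For $R(\beta * \gamma) = R\beta * R\gamma$, I would apply the interchange law to rewrite this as $(R\beta * \mathrm{id}) \circ (\mathrm{id} * R\gamma)$ and then use functoriality of $*$, together with the observation that horizontally padding a simple 2-cell $\mathrm{id}_p * \mathbf{e}_f^{\pm 1} * \mathrm{id}_q$ (with $f \in \mathcal{G}$) by an identity just absorbs into the existing padding and leaves a simple 2-cell. For $R(\beta^*)$, I would appeal to Lemma \ref{invlem}, which expresses $\beta^*$ as a three-term vertical composite built from $\mathbf{e}^{-1}$, $\beta^{-1}$ and $\mathbf{i}^{-1}$ horizontally padded by identities; applying $R$ and combining the inductive hypothesis for $\beta^{-1}$ with the generator and $*$-cases handled above finishes the argument. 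The main obstacle is this final $-^*$ case: there is no analogue of Lemma \ref{elem} directly available for $\mathbf{e}$ applied to a starred compound, so one is forced to route through Lemma \ref{invlem} and combine several pieces before the induction closes.
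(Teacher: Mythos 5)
Your proposal is correct and follows essentially the same route as the paper's proof: you use Lemma \ref{flem}\textbf{(2)} to reduce to $R\alpha$, Lemma \ref{invlem} to eliminate $-^{*}$ on 2-cells, Lemma \ref{ulem1} (plus $R\mathbf{u} = \mathrm{id}$) to trade $\mathbf{i}$ for $\mathbf{e}$, and Lemma \ref{elem} for the resulting $R\mathbf{e}$'s. The only difference is presentational: you organize the paper's ``systematic rewriting'' as an explicit structural induction with closure checks for $\circ$, $*$ and inverses, which makes the argument more rigorous but adds no new ideas.
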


\begin{proof}
Using Lemma \ref{invlem}, we start by systematically removing all occurrences of $-^{*}$ appearing in $\alpha$. We can subsequently replace every occurrence of $\mathbf{i}$ by occurrences of $\mathbf{e}$, using Lemma \ref{ulem1}. By Lemma \ref{elem}, the 2-cell $R \alpha$ now has the required property. But $\alpha = R \alpha$, as an immediate consequence of Lemma \ref{flem} \textbf{(2)}.
\end{proof}

\begin{dfn}
Define the \textit{length} of a 1-cell of $\mathcal{F}_{a} \mathcal{G}$ to be the number of edges of $\mathcal{G}$ occurring in it, counted with multiplicity (e.g. $\mathrm{length}( f * (f * 1)^{*} ) = 2$).
\end{dfn}

\begin{dfn}
A 2-cell $\alpha : u \longrightarrow v$ of $\mathcal{F}_{a} \mathcal{G}$ is called a \textit{simple reduction} if it is simple and $\mathrm{length}(v) < \mathrm{length}(u)$. We say that a 2-cell of $\mathcal{F}_{a} \mathcal{G}$ is a \textit{reduction} if it is an identity or it can be obtained by (vertically) composing finitely many simple reductions.
\end{dfn}

The next Lemma shows that we are in a setting in which a `Diamond Lemma' can be applied. For us, 2-cells will take the place of the binary relation in terms of which the classical Diamond Lemma is usually formulated. This does not create any difficulties and the proof will be essentially that of the classical Lemma.

\begin{lem} \label{diam1}
Let $u$ be a 1-cell of $\mathcal{F}_{a} \mathcal{G}$. Then for any two simple reductions $\alpha : u \longrightarrow v$ and $\alpha' : u \longrightarrow v'$, there exist reductions $\beta : v \longrightarrow w$ and $\beta' : v' \longrightarrow w$ completing the commutative `diamond' below
\begin{equation*}
\begin{tikzcd}[row sep=huge, column sep=huge]
u \arrow[r, "\alpha"] \arrow[d, swap, "\alpha'"] & v \arrow[d, dashed, "\beta"] \\
v' \arrow[r, swap, dashed, "\beta'"] & w
\end{tikzcd}
\end{equation*}
\end{lem}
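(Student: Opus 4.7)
The plan is a case analysis on how the two simple reductions interact. By Construction \ref{fracns} together with the convention of deleting superfluous $1$'s, I will regard $u$ as a finite word in \emph{atoms} -- 1-cells that are neither $1$ nor of the form $v * w$. A simple reduction then corresponds to a pair of adjacent positions $(i, i+1)$ whose atoms form either $f^* f$ or $f f^*$ for some $f \in \mathcal{G}$; collapsing this pair yields the target word. Given $\alpha$ acting at $(i, i+1)$ and $\alpha'$ acting at $(j, j+1)$, I will split into three cases according to $|i - j|$.

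When $i = j$ one has $\alpha = \alpha'$ on the nose, and I take $\beta = \beta' = \mathrm{id}$. When $|i - j| \geq 2$ the two pairs are disjoint, so I will take $\beta : v \to w$ to be the simple reduction at $(j, j+1)$ (which is still available in $v$) and $\beta' : v' \to w$ to be the one at $(i, i+1)$; writing the two local counits as $\alpha_0$ and $\alpha_0'$, both composites $\beta \circ \alpha$ and $\beta' \circ \alpha'$ equal the 2-cell $\mathrm{id} * \alpha_0 * \mathrm{id} * \alpha_0' * \mathrm{id}$ by the interchange law (bifunctoriality of horizontal composition).

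The interesting case is $|i - j| = 1$: the shared atom forces the three-atom segment at the overlap to be either $ff^*f$ or $f^*ff^*$ for some $f \in \mathcal{G}$, by a short enumeration (a reducible pair ending in $f$ must be $f^*f$ and a reducible pair starting with $f$ must be $ff^*$, and similarly when the middle atom is $f^*$). In both subcases the two reductions land at the same $v = v'$. I then claim they agree as 2-cells: after cancelling the surrounding identities via the interchange law, this reduces to the equation $\mathrm{id}*\mathbf{e} = \mathbf{i}^{-1}*\mathrm{id}$ on $ff^*f$, respectively $\mathbf{e}*\mathrm{id} = \mathrm{id}*\mathbf{i}^{-1}$ on $f^*ff^*$. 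Both equations follow from Lemma \ref{triidlem}: each candidate is a two-sided inverse of the corresponding $\mathbf{i}$-insertion on the target, hence they coincide. Taking $\beta = \beta' = \mathrm{id}$ then closes the diamond.

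The main obstacle is this overlap case, which is precisely where Lemma \ref{triidlem} does its work; the only other delicate point -- making sense of ``positions'' modulo associativity and the unit laws -- is handled by Construction \ref{fracns} together with the $1$-deletion convention already in place.
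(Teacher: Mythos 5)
Your proposal is correct and follows essentially the same route as the paper: the same three-way case split (identical reductions, disjoint redexes closed by the interchange law, and overlapping redexes of shape $ff^*f$ or $f^*ff^*$ closed with $\beta=\beta'=\mathrm{id}$ via Lemma \ref{triidlem}). The only difference is presentational -- you organize the cases by positions $|i-j|$ in the word of atoms, whereas the paper lists representative cases and notes the rest are similar.
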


\begin{proof}
The proof is just a matter of making a few case distinctions. In what follows, $x,y,z$ are arbitrary 1-cells of $\mathcal{F}_{a} \mathcal{G}$ and $f,g$ are edges of $\mathcal{G}$.
\begin{itemize}
\item{If $\alpha = \alpha'$, then we can take $\beta = \beta' = \mathrm{id}$.}
\item{If 
\begin{equation*}
\alpha = \mathrm{id} * \mathbf{e}_{f} * \mathrm{id} : x f^{*} f y g^{*} g z \longrightarrow x y g^{*} g z \qquad \text{and} \qquad \alpha' = \mathrm{id} * \mathbf{e}_{g} * \mathrm{id} : x f^{*} f y g^{*} g z \longrightarrow x f^{*} f y z,
\end{equation*}
then we can take
\begin{equation*}
\beta = \mathrm{id} * \mathbf{e}_{g} * \mathrm{id} : x y g^{*} g z \longrightarrow x y z \qquad \text{and} \qquad \beta' = \mathrm{id} * \mathbf{e}_{f} * \mathrm{id} : x f^{*} f y z \longrightarrow x y z.
\end{equation*}}
\item{If 
\begin{equation*}
\alpha = \mathrm{id} * \mathbf{e}_{f} * \mathrm{id} : x f f^{*} f y  \longrightarrow x f y \qquad \text{and} \qquad \alpha' = \mathrm{id} * \mathbf{i}_{f}^{-1} * \mathrm{id} : x f f^{*} f y  \longrightarrow x f y,
\end{equation*}
then we can take $\beta = \beta' = \mathrm{id}$, by Lemma \ref{triidlem}.}
\item{If 
\begin{equation*}
\alpha = \mathrm{id} * \mathbf{e}_{f} * \mathrm{id} : x f^{*} f f^{*} y  \longrightarrow x f^{*} y \qquad \text{and} \qquad \alpha' = \mathrm{id} * \mathbf{i}_{f}^{-1} * \mathrm{id} : x f^{*} f f^{*} y  \longrightarrow x f^{*} y,
\end{equation*}
then we can take $\beta = \beta' = \mathrm{id}$, by Lemma \ref{triidlem}.}
\end{itemize}
All remaining cases are similar to one of the cases above.
\end{proof}

\begin{dfn}
A 1-cell $u$ of $\mathcal{F}_{a} \mathcal{G}$ is \textit{minimal} if there is no simple reduction $u \longrightarrow v$, for any $v$.
\end{dfn}

\begin{lem} \label{diam2}
Let $\alpha: u \longrightarrow v$ and $\alpha': u \longrightarrow v'$ be reductions in $\mathcal{F}_{a} \mathcal{G}$. If $v$ and $v'$ are both minimal, then $v = v'$ and $\alpha = \alpha'$.
\end{lem}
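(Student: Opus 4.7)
The plan is to mimic the classical Newman's lemma argument, using Lemma~\ref{diam1} as the local confluence step and the trivial observation that every simple reduction strictly decreases length, which supplies termination. The induction will be on $\mathrm{length}(u)$.

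For the base case, if $u$ is itself minimal, then no simple reduction starts at $u$. Thus the only reduction out of $u$ is the identity, which forces $\alpha = \alpha' = \mathrm{id}$ and $v = v' = u$.

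For the inductive step, assume $u$ is not minimal. If either $\alpha$ or $\alpha'$ were the identity, the corresponding target would equal $u$; but that target is minimal, contradicting our assumption on $u$. So both are nonidentity compositions and we may factor
\begin{equation*}
\alpha = \alpha_2 \circ \alpha_1, \qquad \alpha' = \alpha'_2 \circ \alpha'_1,
\end{equation*}
where $\alpha_1 : u \longrightarrow u_1$ and $\alpha'_1 : u \longrightarrow u'_1$ are single simple reductions and $\alpha_2, \alpha'_2$ are reductions. Lemma~\ref{diam1} supplies reductions $\beta : u_1 \longrightarrow w$ and $\beta' : u'_1 \longrightarrow w$ with $\beta \circ \alpha_1 = \beta' \circ \alpha'_1$ as 2-cells in $\mathcal{F}_a \mathcal{G}$. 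Since $\mathrm{length}$ takes values in $\mathbb{N}$ and strictly decreases under each simple reduction, iterated simple reduction out of $w$ must terminate at a minimal 1-cell, giving a reduction $\delta : w \longrightarrow w^{*}$ with $w^{*}$ minimal.

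Since $\mathrm{length}(u_1) < \mathrm{length}(u)$, the induction hypothesis applies to the pair of reductions $\alpha_2 : u_1 \longrightarrow v$ and $\delta \circ \beta : u_1 \longrightarrow w^{*}$, both of whose targets are minimal; it yields $v = w^{*}$ and $\alpha_2 = \delta \circ \beta$. Similarly, the induction hypothesis applied to $\alpha'_2$ and $\delta \circ \beta'$ gives $v' = w^{*}$ and $\alpha'_2 = \delta \circ \beta'$. Hence $v = v'$ and
\begin{equation*}
\alpha = \alpha_2 \circ \alpha_1 = \delta \circ \beta \circ \alpha_1 = \delta \circ \beta' \circ \alpha'_1 = \alpha'_2 \circ \alpha'_1 = \alpha',
\end{equation*}
completing the induction. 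The only mildly delicate point is the equality of 2-cells (not merely the targets), but this is handed to us by the fact that Lemma~\ref{diam1} closes the diamond as a commutative square of 2-cells in $\mathcal{F}_a \mathcal{G}$, so the induction carries the 2-cell equality along for free.
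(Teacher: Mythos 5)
Your proof is correct and follows essentially the same route as the paper: induction on $\mathrm{length}(u)$, peeling off the first simple reduction from each of $\alpha$ and $\alpha'$, closing the diamond with Lemma \ref{diam1}, reducing the common corner to a minimal 1-cell, and applying the induction hypothesis on each branch to get equality of both targets and 2-cells. Your treatment is slightly more explicit about the base case and about why the 2-cell equality propagates, but the argument is the same.
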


\begin{proof}
We use induction on the length of $u$. If $v = u$ or $v' = u$, then $u$ is minimal and the assertion is true for trivial reasons, so suppose this is not the case. Then we can factor factor $\alpha$ and $\alpha'$ as
\begin{equation*}
u \xrightarrow{\; \alpha_{1} \;} x \xrightarrow{\; \alpha_{2} \;} v \qquad \text{and} \qquad u \xrightarrow{\; \alpha_{1}' \;} x' \xrightarrow{\; \alpha_{2}' \;} v'
\end{equation*}
respectively, where $\alpha_{1}, \alpha_{1}'$ are simple reductions and $\alpha_{2}, \alpha_{2}'$ are reductions. Lemma \ref{diam1} provides us with a commutative square of reductions
\begin{equation*}
\begin{tikzcd}[row sep=huge, column sep=huge]
u \arrow[r, "\alpha_{1}"] \arrow[d, swap, "\alpha_{1}'"] & x \arrow[d, "\beta"] \\
x' \arrow[r, swap, "\beta'"] & y
\end{tikzcd}
\end{equation*}
and we may suppose that $y$ is minimal, by reducing it if necessary. Now $\mathrm{length}(x) < \mathrm{length}(u)$, so $y = v$ and $\beta = \alpha_{2}$ by the induction hypothesis. Applying this same reasoning to $x'$ yields $y = v'$ and $\beta' = \alpha_{2}'$, from which it follows that $v = v'$ and $\alpha = \alpha'$.
\end{proof}

\begin{lem} \label{uniqlem}
Let $u$ be a 1-cell of $\mathcal{F}_{a} \mathcal{G}$ such that $Ru = u$. Then there exists at most one 2-cell $\alpha : u \longrightarrow 1$.
\end{lem}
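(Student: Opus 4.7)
The plan is to exploit the confluence machinery of Lemmas \ref{diam1} and \ref{diam2} together with the reduction-to-simples result of Lemma \ref{simplem}. The central idea is to attach to every $R$-fixed 1-cell $u$ a canonical reduction $\rho_u : u \to u_\dag$ into a minimal normal form, and then to argue that any 2-cell $\alpha : u \to 1$ must coincide with $\rho_u$. Uniqueness of $\alpha$ then falls out because $\rho_u$ is uniquely determined by $u$.

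The first step is to produce $\rho_u$ and $u_\dag$. Existence is immediate: starting from $u$, one may iteratively apply simple reductions, and since every simple reduction strictly lowers the length this process must terminate at some minimal $u_\dag$. Uniqueness of both $u_\dag$ and $\rho_u$ is exactly the content of Lemma \ref{diam2}.

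Next I would prove the key invariance property: for any simple 2-cell $\sigma : u \to v$ between $R$-fixed 1-cells, one has $u_\dag = v_\dag$ and $\rho_u = \rho_v \circ \sigma$. If $\sigma$ happens to be a simple reduction, then $\rho_v \circ \sigma$ is itself a reduction of $u$ to a minimal 1-cell, and Lemma \ref{diam2} forces it to agree with $\rho_u$. If instead $\sigma$ is a simple expansion, then $\sigma^{-1}$ is a simple reduction, and applying the previous case to $\sigma^{-1}$ yields $\rho_v = \rho_u \circ \sigma^{-1}$, which rearranges to the desired identity.

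With these tools in hand the conclusion is a telescoping argument. Given $\alpha : u \to 1$, invoke Lemma \ref{simplem} to write $\alpha = \sigma_n \circ \cdots \circ \sigma_1$ as a vertical composite of simple 2-cells (the trivial case $\alpha = \mathrm{id}$ being handled separately). Since $1$ is minimal, $\rho_1 = \mathrm{id}$; inserting this at the top of the composite and repeatedly applying the invariance property to absorb each $\sigma_i$ gives
\[
\alpha = \rho_1 \circ \sigma_n \circ \cdots \circ \sigma_1 = \rho_{u_{n-1}} \circ \sigma_{n-1} \circ \cdots \circ \sigma_1 = \cdots = \rho_u,
\]
where $u_0 = u, u_1, \ldots, u_n = 1$ are the intermediate 1-cells. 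Because $\rho_u$ depends only on $u$, this pins $\alpha$ down. The main obstacle, and really the only nontrivial thing to check, is that the intermediate $u_i$ produced by the decomposition are $R$-fixed so that $\rho_{u_i}$ is actually defined; this follows from the shape of a simple 2-cell together with the characterization in Lemma \ref{flem}(1), since the outer parts of any simple 2-cell with $R$-fixed source are themselves $R$-fixed and the operation replaces an $R$-fixed subword by another.
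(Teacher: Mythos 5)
Your proposal is correct and takes essentially the same route as the paper: both decompose $\alpha$ into simple 2-cells via Lemma \ref{simplem} and then use the confluence result of Lemma \ref{diam2} to collapse the composite, factor by factor, onto the unique reduction of $u$ to a minimal normal form (your explicit $\rho_u$ is just a repackaging of the paper's induction showing that every 2-cell $u \longrightarrow 1$ is a reduction, with the expansion case handled identically). Your closing worry about $R$-fixedness of the intermediate 1-cells is resolved correctly but is in fact harmless either way, since Lemmas \ref{diam1} and \ref{diam2} apply to arbitrary 1-cells of $\mathcal{F}_{a} \mathcal{G}$.
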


\begin{proof}
In view of Lemma \ref{diam2}, it suffices to show that every $\alpha : u \longrightarrow 1$ is in fact a reduction. If $\alpha = \mathrm{id}$, there is nothing to prove, so suppose this is not the case. Since $R 1 = 1$, Lemma \ref{simplem} allows us to write $\alpha$ as a finite composition of simple 2-cells. In other words, as a composition in which every component is either a simple reduction or an inverse thereof. We use induction on the length of this composition. If $\alpha$ is equal to
\begin{equation*}
u \xrightarrow{\; \alpha_{1} \;} v \xrightarrow{\; \alpha_{2} \;} 1,
\end{equation*}
with $\alpha_{1}$ a simple reduction, then we are done, for $\alpha_{2}$ is a reduction by the induction hypothesis. If instead $\alpha_{1}^{-1}$ is a simple reduction, let $\beta : u \longrightarrow w$ be any reduction with $w$ minimal. Then $w = 1$ and $\beta \circ \alpha_{1}^{-1} = \alpha_{2}$ by Lemma \ref{diam2}, so $\alpha = \beta$ and we are done as well.
\end{proof}

\begin{thm} \label{coth3}
If $u, v : A \longrightarrow B$ are 1-cells of $\mathcal{F}_{a} \mathcal{G}$, then there exists at most one 2-cell $u \longrightarrow v$.
\end{thm}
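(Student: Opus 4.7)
The plan is to reduce uniqueness of a 2-cell $\alpha : u \longrightarrow v$ to Lemma \ref{uniqlem}, by converting $\alpha$ into a 2-cell whose codomain is $1$ and whose domain is a 1-cell fixed by $R$.

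First, I would handle the source. Naturality of $\rho : \mathrm{id} \Longrightarrow R$ (Construction \ref{fcon}) gives $\rho_v \circ \alpha = R\alpha \circ \rho_u$, so
$$
\alpha = \rho_v^{-1} \circ R\alpha \circ \rho_u
$$
for every 2-cell $\alpha : u \longrightarrow v$. Since $\rho_u$ and $\rho_v$ depend only on $u$ and $v$, and since $R(Ru) = Ru$ by idempotency (Lemma \ref{flem} (3)), it suffices to prove the statement in the case $Ru = u$.

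Next, I would define the isomorphism
$$
\chi_v := \mathbf{e}_v \circ (\rho_{v^*}^{-1} * \mathrm{id}_v) : R(v^*) * v \longrightarrow 1
$$
and associate to any $\alpha : u \longrightarrow v$ the composite $\chi_v \circ (\mathrm{id}_{R(v^*)} * \alpha) : R(v^*) * u \longrightarrow 1$. Strictness and idempotency of $R$, combined with $Ru = u$, give $R(R(v^*) * u) = R(R(v^*)) * R(u) = R(v^*) * u$, so Lemma \ref{uniqlem} applies to this composite. Hence for any two parallel 2-cells $\alpha, \alpha' : u \longrightarrow v$ we obtain $\chi_v \circ (\mathrm{id} * \alpha) = \chi_v \circ (\mathrm{id} * \alpha')$.

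To finish, I would cancel the isomorphism $\chi_v$ to get $\mathrm{id}_{R(v^*)} * \alpha = \mathrm{id}_{R(v^*)} * \alpha'$, and then invoke Lemma \ref{eqlem}, according to which $R(v^*) * -$ is an equivalence of categories and therefore in particular faithful; this yields $\alpha = \alpha'$. The one delicate point is that one must use $R(v^*)$ rather than $v^*$ itself: a formal inverse such as $f^{**}$ is typically not $R$-fixed even when $f^*$ is, so without first normalizing the domain, the source $v^* * u$ would fail the hypothesis of Lemma \ref{uniqlem}.
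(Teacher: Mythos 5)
Your proof is correct and follows essentially the same route as the paper's: reduce to Lemma \ref{uniqlem} by whiskering with the inverse of $v$ (via Lemma \ref{eqlem}) and normalizing with the idempotent strict morphism $R$. The only difference is that you normalize the source first and whisker second, which is why you must use $R(v^{*})$ in place of $v^{*}$ --- a real subtlety that you correctly identify and handle --- whereas the paper whiskers first and then applies $R$ to the whole hom-set, so that $v^{*}*u$ gets normalized automatically.
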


\begin{proof}
By Lemma \ref{eqlem}, $v^{*}$ induces a bijection between the set of 2-cells $u \longrightarrow v$ and the set of 2-cells $v^{*} * u \longrightarrow 1$, so we may assume that $v = 1$. Since $R$ is a biequivalence, there is a bijection between the set of 2-cells $u \longrightarrow 1$ and the set of 2-cells $Ru \longrightarrow 1$. By idempotency of $R$, we are now reduced to a situation where the conditions of Lemma \ref{uniqlem} are satisfied.
\end{proof}

\section{Coherence for bigroupoids}

We will now combine the coherence theorem for AU-bigroupoids and the coherence theorem for bicategories into a coherence theorem for bigroupoids using techniques from \cite{MR1250465} and \cite{MR3076451}. Recall that one of the equivalent ways the coherence theorem for bicategories can be expressed is the following.

\begin{thm} \label{catthm}
In a bicategory $\mathcal{B}$, every formal diagram commutes.
\end{thm}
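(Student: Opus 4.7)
The plan is to follow the same overall strategy as in the appendix's treatment of AU-bigroupoids, but with the involution data removed and the associators and unitors in the role played there by $\mathbf{e}$ and $\mathbf{i}$. Since a ``formal diagram'' in a bicategory is, by definition, one built from iterated composites of 1-cells (and units) with edges labelled by pastings of instances of $\mathbf{a}, \mathbf{l}, \mathbf{r}$, their inverses, and identities, any such diagram in an arbitrary bicategory $\mathcal{B}$ is the image, under a unique strict pseudofunctor $\widetilde{F} : \mathcal{F}_{b}\mathcal{G} \longrightarrow \mathcal{B}$, of the corresponding diagram in the free bicategory $\mathcal{F}_{b}\mathcal{G}$ on a suitable graph $\mathcal{G}$. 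So the theorem reduces to showing that in $\mathcal{F}_{b}\mathcal{G}$ any two parallel coherence 2-cells $u \longrightarrow v$ are equal.

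To prove this I would construct $\mathcal{F}_{b}\mathcal{G}$ in direct analogy with Construction \ref{fracns}: the 0-cells are the nodes of $\mathcal{G}$; the 1-cells are formal expressions generated by the edges and formal units $1_{A}$ under a formal binary operation $-*-$ (with no identifications imposed on 1-cells); the 2-cells are generated by formal $\mathbf{a}_{f,g,h}$, $\mathbf{l}_{f}$, $\mathbf{r}_{f}$, their inverses and identities, closed under $-\circ-$ and $-*-$, and quotiented by naturality of $\mathbf{a}, \mathbf{l}, \mathbf{r}$, the category and functor axioms, and the coherence laws (\ref{coh1}) and (\ref{coh2}). Next, I would build a strict endomorphism $R : \mathcal{F}_{b}\mathcal{G} \longrightarrow \mathcal{F}_{b}\mathcal{G}$ that is the identity on 0-cells and sends each 1-cell $u$ to a canonical fully-left-bracketed normal form $Ru$ (with all superfluous $1$'s deleted), together with a natural family of coherence isomorphisms $\rho_{u} : u \longrightarrow Ru$ assembled from $\mathbf{a}, \mathbf{l}, \mathbf{r}$; the inductive definition of $R$ on 1-cells determines its action on 2-cells by the requirement $R\alpha \circ \rho_{u} = \rho_{v} \circ \alpha$, exactly as in Construction \ref{fcon}.

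The key observation is that the image of $R$ is a strict 2-subcategory of $\mathcal{F}_{b}\mathcal{G}$: on a left-associated bracketing with no spurious units, the instances of $\mathbf{a}, \mathbf{l}, \mathbf{r}$ obtained by pasting are forced to be identities. Consequently, for any two parallel coherence 2-cells $\alpha, \beta : u \longrightarrow v$ one has $Ru = Rv$ and $R\alpha = R\beta = \mathrm{id}$, so the naturality square for $R$ gives
\begin{equation*}
\alpha \;=\; \rho_{v}^{-1} \circ R\alpha \circ \rho_{u} \;=\; \rho_{v}^{-1} \circ R\beta \circ \rho_{u} \;=\; \beta,
\end{equation*}
and pushing forward along $\widetilde{F}$ closes the argument.

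The main obstacle is pinning down the precise class of ``formal diagrams'' for which this machinery applies --- specifically, ensuring that the 2-cell labels on the edges are genuinely canonical pastings of $\mathbf{a}, \mathbf{l}, \mathbf{r}^{\pm 1}$ built from a shared finite set of generating 1-cells (and not smuggled-in non-coherence 2-cells of $\mathcal{B}$). Once this is formulated following \cite{MR1250465} and \cite{MR3076451}, the remainder is bookkeeping: the well-definedness of $R$ on 2-cells and the strictness of its image are verified case-by-case on the recursive structure of 1-cells, exactly as in Lemma \ref{flem}, and there is no need for a Diamond Lemma here because in the absence of $-^{*}$ the normal form is deterministic from the outset.
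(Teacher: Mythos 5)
First, a point of comparison: the paper offers no proof of Theorem \ref{catthm} at all. It is imported as the classical coherence theorem for bicategories, with the remark that the reader is assumed to know it from other sources and a citation to a published proof. So your proposal is being measured against the standard literature argument rather than against anything in the paper itself.

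Your overall strategy --- reduce to the free bicategory $\mathcal{F}_{b}\mathcal{G}$ on a graph, normalize 1-cells to a left-bracketed, unit-free form $Ru$, and show that every canonical 2-cell is conjugate under $\rho$ to an identity --- is the standard and correct outline, but two steps do not work as written. First, a \emph{strict} endomorphism $R$ of $\mathcal{F}_{b}\mathcal{G}$ whose values are fully left-bracketed normal forms cannot exist: strictness forces $R(w * v) = Rw * Rv$ on the nose, and the right-hand side is not left-bracketed whenever $Rv$ is a nontrivial composite (e.g.\ $g * (f * e)$). This is exactly the point where Construction \ref{fcon} leans on the ambient AU-structure: there $- * -$ is strictly associative and unital on 1-cells, so the clause $R(w*v) = Rw * Rv$ is unproblematic, whereas in $\mathcal{F}_{b}\mathcal{G}$ the 1-cells are not identified up to associativity and $R$ can only be a pseudofunctor (or must be valued in a separately built strict 2-category of normal forms), which changes how $R\alpha$ is defined. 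Second, and more seriously, the central claim that $R\alpha = \mathrm{id}$ for every canonical $\alpha$ is asserted rather than proved. Knowing that $Ru = Rv$ and that composition of normal forms is strictly associative does not force an endo-2-cell of a normal form to be the identity; what must be shown is that the two canonical routes $(hg)f \longrightarrow N$ and $(hg)f \longrightarrow h(gf) \longrightarrow N$ into the common normal form $N$ coincide, and likewise for $\mathbf{l}$, $\mathbf{r}$ and all whiskered instances. That is precisely the content of Mac Lane's induction, it is where the pentagon (\ref{coh1}) and triangle (\ref{coh2}) axioms enter, and Mac Lane's own argument for it is a confluence argument on association and unit moves in which the pentagon resolves the critical pair of overlapping associativity rewrites. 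Your remark that no Diamond-Lemma-type argument is needed because ``the normal form is deterministic'' conflates uniqueness of the normal form (true and easy) with uniqueness of the canonical 2-cell into it, which is the actual theorem. Either supply that induction, or switch to a strictification or Yoneda-style proof as in the source the paper cites.
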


The notion of a formal diagram in a bicategory can be made precise inductively or analogous to Definition \ref{fordiagthm}, but we will not further address this here. Instead, we assume that the reader is familiar with Theorem \ref{catthm} through other sources. A concise proof is given in \cite{1998math.....10017L} for example. In the upcoming Lemma, we shall apply it to partially strictify arbitrary bigroupoids.

\begin{lem} \label{austr}
Given a bigroupoid $\mathcal{B}$, there exists a AU-bigroupoid $\mathcal{SB}$ with biequivalences $(E, \epsilon) : \mathcal{SB} \longrightarrow \mathcal{B}$ and $(S, \sigma) : \mathcal{B} \longrightarrow \mathcal{SB}$.
\end{lem}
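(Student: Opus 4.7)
The plan is to construct $\mathcal{SB}$ by a string-strictification. Take $\mathcal{SB}_0 = \mathcal{B}_0$, and for each pair of 0-cells $A, B$ let the objects of $\mathcal{SB}(A, B)$ be the finite composable strings $\vec f = (f_n, \ldots, f_1)$ of 1-cells of $\mathcal{B}$ from $A$ to $B$, with the empty string allowed when $A = B$. Fix once and for all a ``bracketing'' assignment $[-]$ that sends each such string to a chosen composite in $\mathcal{B}$ (say the left-bracketed one $((\cdots(f_n * f_{n-1}) * \cdots) * f_1)$) and the empty string to $1_A$. \emph{Define} the 2-cells from $\vec f$ to $\vec g$ in $\mathcal{SB}$ to be the 2-cells $[\vec f] \to [\vec g]$ in $\mathcal{B}$, with vertical composition inherited.

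Horizontal composition on objects of $\mathcal{SB}$ is concatenation of strings, which is strictly associative and strictly unital, so the structural isomorphisms $\mathbf{a}$, $\mathbf{l}$, $\mathbf{r}$ of $\mathcal{SB}$ are identities; on 2-cells it is transported along the canonical $\mathcal{B}$-associators comparing $[\vec g * \vec f]$ with $[\vec g] * [\vec f]$. Inversion on objects is componentwise in reverse order, $(f_n, \ldots, f_1)^* = (f_1^*, \ldots, f_n^*)$, and the natural isomorphisms $\mathbf{e}, \mathbf{i}$ of $\mathcal{SB}$ are constructed inductively in the length of $\vec f$ from the corresponding 2-cells of $\mathcal{B}$, sandwiched between canonical associators that peel off adjacent pairs $f_k * f_k^*$ and $f_k^* * f_k$ from the inside out. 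The pseudofunctors $(E, \epsilon)$ and $(S, \sigma)$ are the obvious ones: $E$ is the identity on 0-cells and equals $[-]$ locally, with $\epsilon_{\vec g, \vec f}$ a canonical associator $[\vec g] * [\vec f] \to [\vec g * \vec f]$; $S$ is the identity on 0-cells, sends a 1-cell $f$ to the singleton string $(f)$ and acts as the identity on 2-cells, with $\sigma = \mathrm{id}$. Then $E_{A,B}$ is fully faithful by construction and hits each $f$ via $(f)$, while $E \circ S = \mathrm{id}_{\mathcal{B}}$ forces $S$ to be locally an equivalence as well, so both morphisms are biequivalences.

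The main obstacle is verifying all the coherence axioms. Axioms (\ref{coh1}) and (\ref{coh2}) for $\mathcal{SB}$ are trivial because $\mathbf{a}$, $\mathbf{l}$, $\mathbf{r}$ are identities; axioms (\ref{coh4}) and (\ref{coh5}) for $\epsilon$ and $\sigma$ translate, after applying $E$ where appropriate, to equalities of 2-cells built only from associators and unitors in $\mathcal{B}$, and so follow from the coherence theorem for bicategories (Theorem \ref{catthm}). The genuinely delicate verification is axiom (\ref{coh3}) for $\mathcal{SB}$, which involves the newly defined $\mathbf{e}$ and $\mathbf{i}$: unfolding their inductive definitions and using Theorem \ref{catthm} to collapse the associator bookkeeping reduces it to repeated applications of axiom (\ref{coh3}) in $\mathcal{B}$ itself.
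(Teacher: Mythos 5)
Your construction is in the same spirit as the paper's (a string strictification with 2-cells borrowed from $\mathcal{B}$ via an evaluation functor, and the same $E$ and $S$), but it diverges at one point, and that is exactly where a gap opens up. The paper keeps inverses \emph{formal}: a 1-cell of $\mathcal{SB}$ may be a formal inverse $\overline{u}$ of a whole string, $E$ sends $\overline{u}$ to $(Eu)^{*}$ on the nose, and consequently the inversion component of $\epsilon$, namely $\epsilon : (Eu)^{*} \longrightarrow E(u^{*})$, is the \emph{identity}. With that choice, every remaining comparison cell of $E$ is built from associators and unitors alone, so Theorem \ref{catthm} really does dispose of (\ref{coh4}) and (\ref{coh5}), and $\mathbf{e}_{u}$, $\mathbf{i}_{u}$ can simply be \emph{defined} by $E\mathbf{e}_{u} = \mathbf{e}_{Eu}$, making the $\mathbf{e}$- and $\mathbf{i}$-diagrams of (\ref{coh5}) hold by construction. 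You instead set $(f_{n},\ldots,f_{1})^{*} = (f_{1}^{*},\ldots,f_{n}^{*})$, so $E(\vec f^{\,*}) = [\vec f^{\,*}]$ and $(E\vec f)^{*} = [\vec f\,]^{*}$ differ already for strings of length two: the comparison $(g*f)^{*} \longrightarrow f^{*}*g^{*}$ is not an associator but must be manufactured from $\mathbf{e}$ and $\mathbf{i}$ (it is the 2-cell $\mathbf{b}$ of the appendix). Your claim that ``axioms (\ref{coh4}) and (\ref{coh5}) for $\epsilon$ and $\sigma$ translate \ldots to equalities of 2-cells built only from associators and unitors in $\mathcal{B}$, and so follow from the coherence theorem for bicategories'' is therefore false for the inversion constraint of $E$: the third and fourth diagrams of (\ref{coh5}) for $\epsilon$ involve $\mathbf{e}$ and $\mathbf{i}$ essentially, and Theorem \ref{catthm} says nothing about them. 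You cannot appeal to the coherence theorem for bigroupoids here either, since that theorem is downstream of this very lemma.

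The gap is fillable, but not for free: you would need to (i) actually define the inversion component of $\epsilon$ and the action of $-^{*}$ on 2-cells of $\mathcal{SB}$ (neither appears in your write-up), and (ii) verify the $\mathbf{e}$- and $\mathbf{i}$-diagrams of (\ref{coh5}), the naturality of your inductively built $\mathbf{e}$ and $\mathbf{i}$, and axiom (\ref{coh3}) by hand, using the triangle identities for adjoint equivalences (the arguments of Lemmas \ref{triidlem} and \ref{invlem}) in addition to Theorem \ref{catthm}. This is the standard ``composites of adjoint equivalences'' yoga and it does go through, but it is a substantial block of verification that your proposal waves away with the wrong justification. The cleaner fix is the paper's: keep $\overline{u}$ formal so that $E$ commutes strictly with $-^{*}$, and all of this work evaporates.
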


\begin{proof}
We start by constructing $\mathcal{SB}$, along with $(E, \epsilon) : \mathcal{SB} \longrightarrow \mathcal{B}$.

The 0-cells of $\mathcal{SB}$ are the same as those of $\mathcal{B}$. The 1-cells of $\mathcal{SB}$ are generated as follows:
\begin{itemize}
\item{If $f$ is a 1-cell of $\mathcal{B}$, then the string $f$ is a 1-cell of $\mathcal{SB}$. For every 0-cell $A$, there is an empty string $\langle \rangle_{A}$ associated to it.}
\item{If $u$ and $v$ are 1-cells of $\mathcal{SB}$ with suitable source and target, then their concatenation $v u$ is also a 1-cell.}
\item{If $u$ is a 1-cell, then its formal inverse $\overline{u}$ is a 1-cell as well.}
\end{itemize}
Composing 1-cells in $\mathcal{SB}$ is done by concatenating. The empty strings serve as identities. The operation $-^{*}$ is given on 1-cells by taking formal inverses.

Before we can finish the definition of $\mathcal{SB}$, we need to define part of $(E, \epsilon)$. On 0-cells, $E$ is the identity. On 1-cells, $E$ evaluates the string, associating to the left and taking formal inverses to (weak) inverses. For example
\begin{equation*}
E(\overline{k \overline{h} g} f ) = (( k * h^{*} ) * g )^{*} * f.
\end{equation*}
 For 1-cells
\begin{equation*}
A \overset{u}{\longrightarrow} B \overset{v}{\longrightarrow} C,
\end{equation*}
of $\mathcal{SB}$, the 2-cell
\begin{equation*}
\epsilon : Ev * Eu \longrightarrow E(v * u)
\end{equation*}
is defined to be the canonical one. The 2-cells
\begin{equation*}
\epsilon : 1_{EA} \longrightarrow E 1_{A} \qquad \text{and} \qquad \epsilon : (Eu)^{*}  \longrightarrow E(u^{*})
\end{equation*}
are both identities.

The set of 2-cells $u \longrightarrow v$ in $\mathcal{SB}$ is defined to be a copy of the set of 2-cells $Eu \longrightarrow Ev$ in $\mathcal{B}$. The vertical composition of 2-cells is borrowed from $\mathcal{B}$ as well. On 2-cells, $E$ is just the identity. In order to define a 2-cell $\alpha$ of $\mathcal{SB}$, it therefore suffices to provide $E \alpha$.

To define the horizontal composition of 2-cells, let $u, u': A \longrightarrow B$ and $v, v' : B \longrightarrow C$ be 1-cells and let $\alpha : u \longrightarrow u'$ and $\beta: v \longrightarrow v'$ be 2-cells of $\mathcal{SB}$. The composition $\beta * \alpha$ is given by requiring that the square
\begin{equation*}
\begin{tikzcd}[row sep=huge, column sep=huge]
Ev * Eu \arrow[r, "\epsilon"] \arrow[d, swap, "E \beta * E \alpha"] & E(v * u) \arrow[d, "E(\beta * \alpha)"] \\
Ev'* Eu' \arrow[r, swap, "\epsilon"] & E(v'* u')
\end{tikzcd}
\end{equation*}
commutes. The operation $-^{*}$ on 2-cells in $\mathcal{B}$ is defined analogously, which boils down to $E(\alpha^{*}) = (E \alpha)^{*}$, as $\epsilon = \mathrm{id}$ in this case. Clearly both $-*-$ and $-^{*}$ are functors. The 2-cell
\begin{equation*}
\mathbf{e}_{u} : u^{*} * u \longrightarrow 1
\end{equation*}
of $\mathcal{SB}$ is defined by
\begin{equation*}
E \mathbf{e}_{u} = \mathbf{e}_{Eu} : Eu^{*} * Eu \longrightarrow 1.
\end{equation*}
Similarly, $\mathbf{i}_{u}$ is represented by $\mathbf{i}_{Eu}$ in $\mathcal{B}$.

Theorem \ref{catthm} can be used to verify that $\mathcal{SB}$ is associative and unital and that the diagrams (\ref{coh4}) and (\ref{coh5}) commute for $\epsilon$. Clearly $E$ is surjective on 0-cells, locally surjective on objects and locally fully faithful.

The morphism $(S, \sigma)$ is the identity on 0-cells, sends a 1-cell to the string with this 1-cell as only element, and is the identity on 2-cells as well. For the composition of 1-cells
\begin{equation*}
A \overset{f}{\longrightarrow} B \overset{g}{\longrightarrow} C,
\end{equation*}
the 2-cell
\begin{equation*}
\sigma : Sg * Sf \longrightarrow S(g * f)
\end{equation*}
is defined by
\begin{equation*}
E \sigma = \mathrm{id} : E(Sg * Sf) \longrightarrow ES(g * f).
\end{equation*}
For identities and inverses, $\sigma$ is defined in a similar way. It is not difficult to check that $(S, \sigma)$ is a morphism. Clearly, $S$ is surjective on 0-cells and locally fully faithful. Lastly, it is locally essentially surjective since a 1-cell $u$ of $\mathcal{SB}$ is isomorphic to $SEu$.
\end{proof}

\begin{dfn}
Let $(F, \phi), (G, \gamma) : \mathcal{A} \longrightarrow \mathcal{B}$ be morphisms of bigroupoids. Assume that $F$ and $G$ agree on 0-cells. Then an \textit{icon} $\alpha : F \Longrightarrow G$ consists of natural isomorphisms
\begin{equation*}
\alpha_{A,B} : F_{A,B} \Longrightarrow G_{A, B},
\end{equation*}
for every pair of 0-cells $A, B$ of $\mathcal{B}$. Furthermore, for every combination
\begin{equation*}
A \overset{f}{\longrightarrow} B \overset{g}{\longrightarrow} C
\end{equation*}
of composable 1-cells of $\mathcal{A}$, the following diagrams should commute
\begin{equation} \label{iconax}
\begin{tikzcd}[row sep=huge, column sep=huge]
Fg * Ff \arrow[r, "\phi"] \arrow[d, swap, "\alpha * \alpha"] & F(g*f) \arrow[d, "\alpha"] & 1_{FA} \arrow[r, "\phi"] \arrow[d, swap, "\mathrm{id}"] & F 1_{A} \arrow[d, "\alpha"] & (Ff)^{*} \arrow[r, "\phi"] \arrow[d, swap, "\alpha^{*}"] & Ff^{*} \arrow[d, "\alpha"] \\
Gg * Gf \arrow[r, swap, "\gamma"] & G(g*f) & 1_{GA} \arrow[r, swap, "\gamma"] & G 1_{A} & (Gf)^{*} \arrow[r, swap, "\gamma"] & Gf^{*}
\end{tikzcd}
\end{equation}
Note that icons may be composed vertically and horizontally, by pointwise composition of the natural isomorphisms.
\end{dfn}

\begin{lem} \label{iconfaith}
Let $(F, \phi), (G, \gamma): \mathcal{A} \longrightarrow \mathcal{B}$ be morphisms of bigroupoids and let $\alpha : F \Longrightarrow G$ be an icon. Then $F$ is locally faithful (locally full) if and only if $G$ is locally faithful (locally full).
\end{lem}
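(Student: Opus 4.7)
The plan is to observe that the coherence axioms (\ref{iconax}) of the icon $\alpha$ play no role in this statement; only the fact that for each pair $A,B$ the component $\alpha_{A,B} : F_{A,B} \Longrightarrow G_{A,B}$ is a natural isomorphism of ordinary functors between groupoids matters. Faithfulness and fullness are properties of a functor which are invariant under natural isomorphism, so the entire lemma reduces to a standard fact about 1-categorical functors applied componentwise at each hom-groupoid $\mathcal{A}(A,B)$.

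Concretely, I would first record that for any parallel 1-cells $f,g : A \longrightarrow B$ in $\mathcal{A}$ and any 2-cell $\beta : f \longrightarrow g$, naturality of $\alpha_{A,B}$ gives the equation
\begin{equation*}
G\beta = \alpha_{g} \circ F\beta \circ \alpha_{f}^{-1},
\end{equation*}
and symmetrically $F\beta = \alpha_{g}^{-1} \circ G\beta \circ \alpha_{f}$. These two formulas set up an explicit bijection between 2-cells $Ff \longrightarrow Fg$ and 2-cells $Gf \longrightarrow Gg$ which is compatible with the action of $F$ and $G$ on 2-cells.

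For faithfulness, if $F$ is locally faithful and $G\beta_{1} = G\beta_{2}$ for $\beta_{1}, \beta_{2} : f \longrightarrow g$, then applying the formula above to both sides and cancelling the invertible 2-cells $\alpha_{f}$, $\alpha_{g}$ yields $F\beta_{1} = F\beta_{2}$, hence $\beta_{1} = \beta_{2}$. For fullness, given any 2-cell $\delta : Gf \longrightarrow Gg$, set $\delta' = \alpha_{g}^{-1} \circ \delta \circ \alpha_{f} : Ff \longrightarrow Fg$; local fullness of $F$ produces $\beta : f \longrightarrow g$ with $F\beta = \delta'$, and then the displayed equation gives $G\beta = \alpha_{g} \circ \delta' \circ \alpha_{f}^{-1} = \delta$.

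The converse implications follow by swapping the roles of $F$ and $G$ and using the inverse icon $\alpha^{-1}$, whose components $\alpha_{A,B}^{-1}$ are still natural isomorphisms. There is no real obstacle here; the only thing worth being careful about is writing the arguments so that both directions and both properties (faithful, full) are covered, which I would do once for $F \Rightarrow G$ and then simply remark that the reverse direction is verbatim with $\alpha$ replaced by $\alpha^{-1}$.
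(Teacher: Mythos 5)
Your proposal is correct and takes exactly the approach of the paper, which simply observes that each $\alpha_{A,B}$ is a natural isomorphism $F_{A,B} \Longrightarrow G_{A,B}$ of functors between the hom-groupoids and that faithfulness and fullness are invariant under natural isomorphism. You have merely spelled out the standard conjugation argument that the paper leaves implicit.
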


\begin{proof}
This follows from the fact that for every pair of 0-cells $A, B$ of $\mathcal{A}$, the functors $F_{A, B}$ and $G_{A,B}$ are naturally isomorpic by $\alpha_{A, B} : F_{A, B} \Longrightarrow G_{A, B}$.
\end{proof}

We construct a bigroupoid that will act as a (weak) equalizer.

\begin{cns}
Let $(F, \phi), (G, \gamma) : \mathcal{A} \longrightarrow \mathcal{B}$ be morphisms of bigroupoids. We construct a bigroupoid $\mathrm{Eq}(F, G)$ with a strict morphism $P : \mathrm{Eq}(F, G) \longrightarrow \mathcal{A}$ and an icon $\sigma : FP \Longrightarrow GP$.

The 0-cells of $\mathrm{Eq}(F, G)$ are those 0-cells $A \in \mathcal{A}_{0}$ satisfying $FA = GA$. The objects of the groupoid $\mathrm{Eq}(F, G)(A, B)$ are pairs $(f, \alpha)$, with $f : A \longrightarrow B$ a 1-cell in $\mathcal{A}$ and $\alpha : Ff \longrightarrow Gf$ a 2-cell in $\mathcal{B}$. A 2-cell from $(f, \alpha)$ to $(g, \beta)$ is a 2-cell $\delta : f \longrightarrow g$ in $\mathcal{A}$ such that the diagram
\begin{equation} \label{signat}
\begin{tikzcd}[row sep=huge, column sep=huge]
Ff \arrow[r, "\alpha"] \arrow[d, swap, "F \delta"] & Gf \arrow[d, "G \delta"] \\
Fg \arrow[r, swap, "\beta"] & Gg
\end{tikzcd}
\end{equation}
commutes.

Given two 1-cells $(f, \alpha) : A \longrightarrow B$ and $(g, \beta) : B \longrightarrow C$, we define composition by
\begin{equation*}
(g, \beta) * (f, \alpha) = (g*f, \gamma \circ (\beta * \alpha) \circ \phi^{-1}),
\end{equation*}
identity by
\begin{equation*}
1_{A} = (1_{A}, \gamma \circ \phi^{-1})
\end{equation*}
and inverses by
\begin{equation*}
(f, \alpha)^{*} = (f^{*}, \gamma \circ \alpha^{*} \circ \phi^{-1}).
\end{equation*}
On 2-cells of $\mathrm{Eq}(F, G)$, the operations $-*-$ and $-^{*}$ are inherited from $\mathcal{A}$ and  we leave it to the reader to check that the 2-cells of $\mathrm{Eq}(F, G)$ are closed under these operations.

The isomorphisms $\mathbf{a}$, $\mathbf{r}$, $\mathbf{l}$, $\mathbf{e}$ and $\mathbf{i}$ are the same as those of $\mathcal{A}$. We also ask the reader to verify that these are in fact 2-cells of $\mathrm{Eq}(F, G)$, using (\ref{coh4}) and (\ref{coh5}). The fact that the diagrams (\ref{coh1}), (\ref{coh2}) and (\ref{coh3}) commute in $\mathrm{Eq}(F, G)$ follows directly from the fact that they commute in $\mathcal{A}$.

We define the morphism $P : \mathrm{Eq}(F, G) \longrightarrow \mathcal{A}$ by
\begin{equation*}
PA = A, \qquad P_{A, B} (f, \alpha) = f, \qquad P_{A, B} \delta = \delta.
\end{equation*}
It should be clear that is a strict morphism of bigroupoids.

We define the component of the icon $\sigma : FP \Longrightarrow GP$ at a 1-cell $(f, \alpha) : A \longrightarrow B$ by
\begin{equation*}
(\sigma_{A, B})_{(f, \alpha)} = \alpha : Ff \longrightarrow Gf.
\end{equation*}
The naturality of $\sigma_{A, B}$ is immediate by (\ref{signat}). The icon axioms (\ref{iconax}) follow directly from the definition of composition, identity and inversion of 1-cells in $\mathrm{Eq}(F, G)$.
\end{cns}

\begin{lem}
Given a graph $\mathcal{G}$, the free bigroupoid $\mathcal{F}_{b} \mathcal{G}$ on $\mathcal{G}$ exists. We record its universal property:
\begin{itemize}
\item{There exists an inclusion of graphs (the unit of the adjunction), $I_{b} : \mathcal{G} \longrightarrow \mathcal{F}_{b} \mathcal{G}$, such that:}
\item{Given a bigroupoid $\mathcal{B}$ and a morphism $F : \mathcal{G} \longrightarrow \mathcal{B}$ of graphs, there exists a unique strict morphism of bigroupoids $\widetilde{F} : \mathcal{F}_{b} \mathcal{G} \longrightarrow \mathcal{B}$ such that $F = \widetilde{F}I_{b}$.}
\end{itemize}
\end{lem}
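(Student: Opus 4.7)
The construction will parallel Construction \ref{fracns} for the free AU-bigroupoid, but with two modifications: we no longer quotient 1-cells by associativity or unit laws (so that $*$ is merely formal, not strictly associative or unital), and we enlarge the family of formal 2-cells to include coherence isomorphisms $\mathbf{a}$, $\mathbf{l}$, $\mathbf{r}$ along with their inverses.

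In more detail, the plan is as follows. The 0-cells of $\mathcal{F}_{b}\mathcal{G}$ are the nodes of $\mathcal{G}$. For every node $A$ we adjoin a formal 1-cell $1_{A} : A \longrightarrow A$, and we then close the edges formally under $-*-$ and $-^{*}$, keeping track of sources and targets, but imposing \emph{no} equations. For every 1-cell $f$ we adjoin formal 2-cells $\mathrm{id}_{f}$, together with components $\mathbf{a}_{h,g,f}$, $\mathbf{l}_{f}$, $\mathbf{r}_{f}$, $\mathbf{e}_{f}$, $\mathbf{i}_{f}$ and their inverses at the appropriate 1-cells. We close these under $-\circ-$, $-*-$ and $-^{*}$ (whenever source and target match) and then quotient by the smallest congruence forcing: associativity of $-\circ-$ and $-*-$ on 2-cells; the functoriality laws for $-*-$ and $-^{*}$; the fact that $\mathrm{id}$ acts as identity and $(-)^{-1}$ as inverse for $-\circ-$; naturality of each of $\mathbf{a}$, $\mathbf{l}$, $\mathbf{r}$, $\mathbf{e}$, $\mathbf{i}$; and the coherence axioms (\ref{coh1}), (\ref{coh2}) and (\ref{coh3}). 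By construction the quotient is a bigroupoid, and $I_{b} : \mathcal{G} \longrightarrow \mathcal{F}_{b}\mathcal{G}$ is the evident inclusion of the generating graph.

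To verify the universal property, let $\mathcal{B}$ be a bigroupoid and $F : \mathcal{G} \longrightarrow \mathcal{B}$ a morphism of graphs. One defines $\widetilde{F}$ by structural recursion on the inductive presentation above: on 0-cells $\widetilde{F}=F$; on 1-cells one sends $1_{A}$ to $1_{FA}$, and $v*u$, $u^{*}$ to $\widetilde{F}v*\widetilde{F}u$, $(\widetilde{F}u)^{*}$ respectively; on 2-cell generators one sends $\mathbf{a}$, $\mathbf{l}$, $\mathbf{r}$, $\mathbf{e}$, $\mathbf{i}$ in $\mathcal{F}_{b}\mathcal{G}$ to the corresponding structural isomorphisms of $\mathcal{B}$ at the image 1-cells, sends $\mathrm{id}_{f}$ to $\mathrm{id}_{\widetilde{F}f}$ and inverses to inverses, and extends to composites by $\widetilde{F}(\beta\circ\alpha)=\widetilde{F}\beta\circ\widetilde{F}\alpha$ and similarly for $-*-$ and $-^{*}$. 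Because $\mathcal{B}$ itself satisfies all the equations we imposed (functoriality, naturality of the structure isomorphisms, and (\ref{coh1})--(\ref{coh3})), the map descends to the quotient. Strictness of $\widetilde{F}$ is immediate from the definition on the 1-cell operations (the connector 2-cells $\phi$ are identities), and uniqueness follows because any strict extension of $F$ is forced on each generator and commutes with $*$, $\circ$ and $^{*}$.

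The only real obstacle is bureaucratic: verifying that the congruence relation on formal 2-cells is well-defined under the operations $-\circ-$, $-*-$ and $-^{*}$ and that the resulting quotient carries a well-defined bigroupoid structure. This is, however, entirely analogous to Construction \ref{fracns} and to the standard construction of free algebras for a finitary essentially algebraic theory; the extra generators $\mathbf{a}$, $\mathbf{l}$, $\mathbf{r}$ and their axioms do not introduce any genuinely new difficulty. We leave this routine verification to the reader, exactly as was done after Construction \ref{fracns}.
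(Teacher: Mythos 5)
Your construction is exactly the one the paper intends: the paper's own justification is simply that $\mathcal{F}_{b}\mathcal{G}$ is built ``analogous to Construction \ref{fracns}'', and your adaptation (dropping the strict associativity/unit quotient on 1-cells and adjoining $\mathbf{a}$, $\mathbf{l}$, $\mathbf{r}$ with the coherence axioms (\ref{coh1})--(\ref{coh3})) matches what the paper does explicitly in the analogous Construction \ref{cns3}. Your verification of the universal property by structural recursion is correct and in fact supplies detail the paper leaves to the reader.
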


\begin{cns}
The construction of $\mathcal{F}_{b} \mathcal{G}$ is analogous to Construction \ref{fracns}.
\end{cns}

\begin{lem} \label{strictify}
Let $F : \mathcal{F}_{b} \mathcal{G} \longrightarrow \mathcal{B}$ be a morphism out of a free bigroupoid. Then there exists a strict morphism $G : \mathcal{F}_{b} \mathcal{G} \longrightarrow \mathcal{B}$ and an icon $\alpha : F \Longrightarrow G$. Furthermore, $FI_{b} = GI_{b} : \mathcal{G} \longrightarrow \mathcal{B}$ and $\alpha I_{b} = \mathrm{id}$ (as $\mathcal{G}_{0} \times \mathcal{G}_{0}$-indexed families of isomorphisms).
\end{lem}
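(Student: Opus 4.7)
The plan is to define $G$ and the icon $\alpha$ simultaneously by structural induction on the 1-cells of $\mathcal{F}_b\mathcal{G}$, viewed as parse trees built from generators via $*$, $-^*$ and $1$. Since $\mathcal{F}_b\mathcal{G}$ is free, specifying $G$ strictly amounts to specifying its underlying graph morphism, and the only reasonable choice is $GA = FA$ on 0-cells and $G(I_b f) = F(I_b f)$ on generating 1-cells. Extending strictly, we set $G(1_A) = 1_{GA}$, $G(v * w) = Gv * Gw$ and $G(w^{*}) = (Gw)^{*}$. The icon components are then forced by the icon axioms (\ref{iconax}) applied with $\gamma = \mathrm{id}$: take $\alpha_{I_b f} = \mathrm{id}$, $\alpha_{1_A} = \phi_A^{-1}$, $\alpha_{v*w} = (\alpha_v * \alpha_w) \circ \phi_{v,w}^{-1}$ and $\alpha_{w^{*}} = \alpha_w^{*} \circ \phi_w^{-1}$.

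Having defined $\alpha$ on 1-cells, extend $G$ to 2-cells by conjugation: for $\delta : u \to u'$, set $G\delta = \alpha_{u'} \circ F\delta \circ \alpha_u^{-1}$. This is a functor on each hom-groupoid (an easy insertion-of-identity argument), and it makes the naturality square defining $\alpha$ an icon commute tautologically. Strict preservation of horizontal composition, $G(\beta * \delta) = G\beta * G\delta$, then follows by expanding $\alpha_{u_2 * v_2}$ and $\alpha_{u_1 * v_1}^{-1}$, using naturality of $\phi$ to move $F(\beta * \delta)$ past the inner $\phi$'s, and noting that all $\phi$'s cancel against one another. Strict preservation of identity and inverse 1-cells is analogous. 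Together with the definition $\phi^G = \mathrm{id}$, this exhibits $G$ as a strict morphism provided it also strictly preserves the structure isomorphisms $\mathbf{a}, \mathbf{l}, \mathbf{r}, \mathbf{e}, \mathbf{i}$.

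The main obstacle is precisely this last point: verifying $G\mathbf{a} = \mathbf{a}'$, and likewise for $\mathbf{l}, \mathbf{r}, \mathbf{e}, \mathbf{i}$. For the associator, one expands both $\alpha_{h(gf)}$ and $\alpha_{(hg)f}$ recursively, rewrites $F\mathbf{a}$ using the coherence axiom (\ref{coh4}) for $F$ to express it in terms of $\mathbf{a}'$ and $\phi$'s, and then observes that the $\phi$'s so introduced cancel exactly against those in the expansions of the $\alpha$'s, leaving only $(\alpha_h * (\alpha_g * \alpha_f)) \circ \mathbf{a}' \circ ((\alpha_h * \alpha_g) * \alpha_f)^{-1}$, which equals $\mathbf{a}'$ by naturality. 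The unit, counit, and inverse cases are similar appeals to (\ref{coh5}). The icon axioms (\ref{iconax}) hold by construction, and the extra conditions $FI_b = GI_b$, $\alpha I_b = \mathrm{id}$ are the generator base case. The coherence axioms (\ref{coh4}) and (\ref{coh5}) for $G$ itself are then trivial, as $\phi^G = \mathrm{id}$ and $G$ strictly preserves $\mathbf{a}, \mathbf{l}, \mathbf{r}, \mathbf{e}, \mathbf{i}$.
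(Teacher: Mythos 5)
Your argument is correct, but it takes a genuinely different route from the paper. You build $G$ and the icon $\alpha$ by hand: strict extension on 1-cells, components of $\alpha$ forced by the icon axioms (\ref{iconax}) with $\gamma = \mathrm{id}$, and $G$ on 2-cells by conjugation $G\delta = \alpha_{u'} \circ F\delta \circ \alpha_u^{-1}$. The decisive computation you identify — expanding $\alpha_{(hg)f}$ and $\alpha_{h(gf)}$, cancelling the resulting $\phi$'s against those produced by rewriting $F\mathbf{a}$ via (\ref{coh4}), and finishing with naturality of $\mathbf{a}'$ — does go through, and the analogous checks for $\mathbf{l}, \mathbf{r}, \mathbf{e}, \mathbf{i}$ and for strict preservation of horizontal composition and $-^{*}$ on 2-cells are routine applications of (\ref{coh5}) and naturality of $\phi$. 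The paper instead obtains $G$ purely from the universal property of $\mathcal{F}_b\mathcal{G}$ (as $\widetilde{FI_b}$, which by uniqueness coincides with your $G$) and manufactures the icon without any 2-cell computation: since $F$ and $G$ agree on the nose on generators, the inclusion $I_b$ lifts through the strict projection $P : \mathrm{Eq}(F,G) \longrightarrow \mathcal{F}_b\mathcal{G}$ of the equalizer bigroupoid via a graph morphism $K$, freeness yields $\widetilde{K}$ with $P\widetilde{K} = \mathrm{id}$, and $\alpha = \sigma\widetilde{K}$ where $\sigma : FP \Longrightarrow GP$ is the tautological icon on $\mathrm{Eq}(F,G)$. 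All coherence verification is thereby front-loaded into the one-time check that $\mathrm{Eq}(F,G)$ is a bigroupoid. Your approach is more elementary and self-contained (no auxiliary equalizer object), at the price of the explicit transport-of-structure calculations; the paper's buys a nearly computation-free lemma by exploiting freeness twice, which is also the pattern it reuses for Theorem \ref{coth2}.
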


\begin{proof}
By freeness of $\mathcal{F}_{b} \mathcal{G}$, there exists a unique strict morphism $G (= \widetilde{FI_{b}}): \mathcal{F}_{b} \mathcal{G} \longrightarrow \mathcal{B}$ such that $FI_{b} = GI_{b} : \mathcal{G} \longrightarrow \mathcal{B}$. (These and the other morphisms are drawn in the diagram at the bottom of this proof.) The map $I_{b}$ now factors through $P : \mathrm{Eq}(F, G) \longrightarrow \mathcal{F}_{b} \mathcal{G}$ as $P K$, where $K : \mathcal{G} \longrightarrow \mathrm{Eq}(F, G)$
\begin{itemize}
\item{sends a 0-cell $A$ to $A$},
\item{sends a 1-cell $f$ to $(f, \mathrm{id}_{Ff})$}
\item{and sends a 2-cell $\beta$ to $\beta$.}
\end{itemize}
The universal property of $\mathcal{F}_{b} \mathcal{G}$ applied to $K$, gives rise to unique strict morphism $\widetilde{K} : \mathcal{F}_{b} \mathcal{G} \longrightarrow \mathrm{Eq}(F, G)$ satisfying $\widetilde{K} I_{b} = K$. Since $P \widetilde{K} I_{b} = I_{b}$ and $P \widetilde{K}$ is strict, $P \widetilde{K}$ must be the identity, again by the universal property of $\mathcal{F}_{b} \mathcal{G}$. Recall that we have an icon $\sigma : FP \Longrightarrow GP$. The icon $\sigma \widetilde{K}$ therefore has source $F P \widetilde{K} = F$ and target $G P \widetilde{K} = G$, so take $\alpha = \sigma \widetilde{K}$. One easily verifies directly from the definitions of $K$ and $\sigma$  that $\sigma K = \mathrm{id}$. We find
\begin{equation*}
\alpha I_{b} = \sigma \widetilde{K} I_{b} = \sigma K = \mathrm{id},
\end{equation*}
as desired.
\begin{equation*}
\begin{tikzcd}[row sep=huge, column sep=huge]
\mathrm{Eq}(F, G) \arrow[r, shift left, "P"] & \mathcal{F}_{b} \mathcal{G} \arrow[r, shift left, "F"{name=F}] \arrow[r, swap, shift right, "G"{name=G}] \arrow[l, shift left, "\widetilde{K}"] & \mathcal{B} \\
G \arrow[u, "K"] \arrow[ur, swap, "I_{b}"] & &
\end{tikzcd}
\end{equation*} 
\end{proof}

\begin{lem}
Given a graph $\mathcal{G}$, the free 2-groupoid $\mathcal{F}_{s} \mathcal{G}$ on $\mathcal{G}$ exists. We record its universal property:
\begin{itemize}
\item{There exists an inclusion of graphs (the unit of the adjunction), $I_{s} : \mathcal{G} \longrightarrow \mathcal{F}_{s} \mathcal{G}$, such that:}
\item{Given a 2-groupoid $\mathcal{B}$ and a morphism $F : \mathcal{G} \longrightarrow \mathcal{B}$ of graphs, there exists a unique strict morphism of bigroupoids $\widetilde{F} : \mathcal{F}_{s} \mathcal{G} \longrightarrow \mathcal{B}$ such that $F = \widetilde{F} I_{s}$.}
\end{itemize}
\end{lem}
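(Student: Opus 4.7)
The plan is to mimic Construction \ref{fracns} for the free AU-bigroupoid, but with strict associativity, unit, and invertibility relations imposed at the 1-cell level (since all of $\mathbf{a}$, $\mathbf{l}$, $\mathbf{r}$, $\mathbf{e}$ and $\mathbf{i}$ must be identities in a 2-groupoid). The outcome will essentially be the free groupoid on the underlying graph of $\mathcal{G}$, viewed as a locally discrete 2-groupoid.

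Concretely, I would take the 0-cells of $\mathcal{F}_{s} \mathcal{G}$ to be the nodes of $\mathcal{G}$, formally adjoin an identity 1-cell $1_{A} : A \longrightarrow A$ for each such node, and close the collection of edges under the formal operations $- * -$ and $-^{*}$ (respecting sources and targets). I would then quotient by the congruence relation generated by associativity of $- * -$, $1$ acting as strict identity, and $f^{*} * f = 1$ together with $f * f^{*} = 1$ for every 1-cell $f$. These relations produce the free groupoid on $\mathcal{G}$ at the 1-cell level. For 2-cells, the only generators that remain are the identities $\mathrm{id}_{u}$ (the generators $\mathbf{e}_{f}$, $\mathbf{i}_{f}$ and their inverses of Construction \ref{fracns}, as well as $\mathbf{a}$, $\mathbf{l}$, $\mathbf{r}$, are all forced to be identities), and these are trivially closed under $- \circ -$, $- * -$ and $-^{*}$. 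Consequently each groupoid $\mathcal{F}_{s} \mathcal{G}(A, B)$ is discrete on its set of 1-cells, and the coherence diagrams (\ref{coh1})--(\ref{coh3}) hold vacuously. Hence $\mathcal{F}_{s} \mathcal{G}$ is a genuine 2-groupoid.

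Let $I_{s} : \mathcal{G} \longrightarrow \mathcal{F}_{s} \mathcal{G}$ be the evident inclusion. Given a 2-groupoid $\mathcal{B}$ and a graph morphism $F : \mathcal{G} \longrightarrow \mathcal{B}$, I would define $\widetilde{F}$ on 0-cells by $\widetilde{F} A = F A$ and on 1-cells inductively by $\widetilde{F} 1_{A} = 1_{F A}$, $\widetilde{F}(v * u) = \widetilde{F} v * \widetilde{F} u$ and $\widetilde{F}(u^{*}) = (\widetilde{F} u)^{*}$. Well-definedness on the quotient is immediate, since every relation I imposed on 1-cells holds strictly in any 2-groupoid. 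On 2-cells, $\widetilde{F}$ is forced to send each $\mathrm{id}_{u}$ to $\mathrm{id}_{\widetilde{F} u}$. The resulting $\widetilde{F}$ is visibly a strict morphism of bigroupoids satisfying $F = \widetilde{F} I_{s}$, and it is uniquely determined by these prescriptions, because a strict morphism out of $\mathcal{F}_{s} \mathcal{G}$ is completely determined by its restriction to the generating edges of $\mathcal{G}$.

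I do not anticipate any substantive obstacle: the construction is essentially identical in spirit to Construction \ref{fracns}, but strictness greatly simplifies matters by eliminating the need to introduce or track formal 2-cells arising from coherence isomorphisms. The only routine verification is that the quotient of formal 1-cells yields a well-defined groupoid, which is a standard construction of the free groupoid on a graph.
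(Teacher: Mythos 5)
Your construction is correct and matches the paper's intent: the paper only states (Construction \ref{fscns}) that $\mathcal{F}_{s}\mathcal{G}$ is built analogously to Construction \ref{fracns}, and your explicit version — quotienting the formal $1$-cells by strict associativity, unit and inverse laws to obtain the free groupoid on $\mathcal{G}$, with only identity $2$-cells remaining — is exactly that analogy carried out, yielding the locally discrete $2$-groupoid the paper later relies on (e.g.\ in the proof of Theorem \ref{coth1}). The verification of the universal property is routine, as you say.
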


\begin{cns} \label{fscns}
The construction of $\mathcal{F}_{s} \mathcal{G}$ is analogous to Construction \ref{fracns}.
\end{cns}

\begin{thm} \label{coth1}
For every graph $\mathcal{G}$, the strict morphism $\Gamma : \mathcal{F}_{b} \mathcal{G} \longrightarrow \mathcal{F}_{s} \mathcal{G}$, induced by the universal property of $\mathcal{F}_{b} \mathcal{G}$ in the diagram
\begin{equation*}
\begin{tikzcd}[row sep=huge, column sep=huge]
\mathcal{G} \arrow[d, swap, "I_{b}"] \arrow[dr, "I_{s}"] & \\
\mathcal{F}_{b} \mathcal{G} \arrow[r, swap, dashed, "\Gamma"] & \mathcal{F}_{s} \mathcal{G}
\end{tikzcd}
\end{equation*}
is a biequivalence.
\end{thm}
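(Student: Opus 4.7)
My plan is to factor $\Gamma$ through the free AU-bigroupoid and handle each factor separately, using coherence for AU-bigroupoids (Theorem \ref{coth3}) and the partial strictification of Lemma \ref{austr}. Since every 2-groupoid is an AU-bigroupoid and every AU-bigroupoid is a bigroupoid, the universal properties of $\mathcal{F}_b \mathcal{G}$ and $\mathcal{F}_a \mathcal{G}$ furnish strict morphisms $\Gamma_a : \mathcal{F}_b \mathcal{G} \longrightarrow \mathcal{F}_a \mathcal{G}$ and $\Gamma_s : \mathcal{F}_a \mathcal{G} \longrightarrow \mathcal{F}_s \mathcal{G}$ satisfying $\Gamma_a I_b = I_a$ and $\Gamma_s I_a = I_s$; uniqueness in the universal property of $\mathcal{F}_b \mathcal{G}$ then forces $\Gamma = \Gamma_s \Gamma_a$. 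By Lemma \ref{lem8} biequivalences satisfy 2-out-of-3, so it suffices to show that both $\Gamma_a$ and $\Gamma_s$ are biequivalences. Both are the identity on 0-cells (so surjective) and locally surjective on 1-cells, because each 1-cell in the target is represented by a formal expression that lifts to the source; in each case only local fully-faithfulness on 2-cells remains to be checked.

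For $\Gamma_s$: since $\mathcal{G}$ has no generating 2-cells and $\mathbf{a}, \mathbf{l}, \mathbf{r}, \mathbf{e}, \mathbf{i}$ are all identities in a 2-groupoid, every 2-cell in $\mathcal{F}_s \mathcal{G}$ is forced to be an identity, so $\mathcal{F}_s \mathcal{G}$ is locally discrete. By Theorem \ref{coth3} each hom-set of 2-cells in $\mathcal{F}_a \mathcal{G}$ also has at most one element. Hence local fully-faithfulness of $\Gamma_s$ reduces to showing that a 2-cell $u \longrightarrow v$ exists in $\mathcal{F}_a \mathcal{G}$ iff $\Gamma_s u = \Gamma_s v$ in $\mathcal{F}_s \mathcal{G}$. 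Functoriality gives the only-if direction; for the if-direction, the identification $\Gamma_s u = \Gamma_s v$ arises from a finite zigzag of the elementary identifications $f^* f \leftrightarrow 1$ and $f f^* \leftrightarrow 1$, each lifting to a component of $\mathbf{e}$ or $\mathbf{i}$ in $\mathcal{F}_a \mathcal{G}$, whose composite is the required 2-cell.

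The main obstacle is $\Gamma_a$. I will apply Lemma \ref{austr} to $\mathcal{F}_b \mathcal{G}$, obtaining an AU-bigroupoid $\mathcal{S}\mathcal{F}_b \mathcal{G}$ with biequivalences $(S, \sigma)$ and $(E, \epsilon)$. The graph morphism $S I_b : \mathcal{G} \longrightarrow \mathcal{S}\mathcal{F}_b \mathcal{G}$ induces, by the universal property of $\mathcal{F}_a \mathcal{G}$, a unique strict morphism $\Psi : \mathcal{F}_a \mathcal{G} \longrightarrow \mathcal{S}\mathcal{F}_b \mathcal{G}$ with $\Psi I_a = S I_b$. Then $\Psi \Gamma_a$ is strict with $\Psi \Gamma_a I_b = S I_b$. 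Applying Lemma \ref{strictify} to $S$ yields an icon $S \Longrightarrow S'$ with $S'$ strict and $S' I_b = S I_b$, and the uniqueness part of the universal property of $\mathcal{F}_b \mathcal{G}$ forces $S' = \Psi \Gamma_a$, so I obtain an icon $\alpha : S \Longrightarrow \Psi \Gamma_a$ which serves as the bridge.

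From this bridge the remaining claims follow. For faithfulness of $\Gamma_a$: if $\beta, \beta' : u \longrightarrow v$ satisfy $\Gamma_a \beta = \Gamma_a \beta'$, then $\Psi \Gamma_a \beta = \Psi \Gamma_a \beta'$, and the naturality squares of $\alpha$ yield $S \beta = S \beta'$, whence $\beta = \beta'$ by local faithfulness of the biequivalence $S$. For fullness: given $\gamma : \Gamma_a u \longrightarrow \Gamma_a v$ in $\mathcal{F}_a \mathcal{G}$, I set $\tilde{\gamma} := \alpha_v^{-1} \circ \Psi \gamma \circ \alpha_u : Su \longrightarrow Sv$ and invoke local fullness of $S$ to obtain $\beta : u \longrightarrow v$ with $S \beta = \tilde{\gamma}$; naturality of $\alpha$ then gives $\Psi \Gamma_a \beta = \Psi \gamma$, and since Theorem \ref{coth3} makes $\Psi$ locally faithful for free (each hom-set of 2-cells in $\mathcal{F}_a \mathcal{G}$ being at most a singleton), this forces $\Gamma_a \beta = \gamma$. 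The delicate step is the universal-property calculation identifying $\Psi \Gamma_a$ with the strictification of $S$, which is what transports the biequivalence content from the partial strictification of Lemma \ref{austr} to the strict map $\Gamma_a$.
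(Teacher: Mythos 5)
Your proof is correct and follows essentially the same route as the paper: factor $\Gamma$ through $\mathcal{F}_{a}\mathcal{G}$, dispose of the second factor via Theorem \ref{coth3} and the local discreteness of $\mathcal{F}_{s}\mathcal{G}$, and transfer local faithfulness to the first factor by strictifying the biequivalence of Lemma \ref{austr} with Lemma \ref{strictify} and identifying the result with $\Psi\Gamma_{a}$ via the universal properties. The only (cosmetic) difference is that you verify surjectivity and local fullness for each factor separately rather than for $\Gamma$ all at once.
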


\begin{proof}
It is clear that $\Gamma$ is surjective on 0-cells, since $\mathcal{FG}$ and $\mathcal{F}_{s} \mathcal{G}$ share the same 0-cells (those of $\mathcal{G}$). The fact that $\Gamma$ is locally surjective follows from the fact that $\Gamma$ is locally surjective on the generating 1-cells (the 1-cells of $\mathcal{G}$ and the new 1-cells of the form $1_{A}$) and an easy induction on $-*-$ and $-^{*}$. The local fullness of $\Gamma$ follows from the local discreteness of $\mathcal{F}_{s} \mathcal{G}$, combined with the observation that if $u$ and $v$ are 1-cells of $\mathcal{F}_{b} \mathcal{G}$ such that $\Gamma u = \Gamma v$, then there must have been a 2-cell $u \longrightarrow v$ in $\mathcal{F}_{b} \mathcal{G}$. (This can be made rigorous by comparing the generation of 2-cells in Construction \ref{fracns} with the generation of the congruence relation for 1-cells in Construction \ref{fscns}.)

It remains to show that $\Gamma$ is locally faithful. Let $\Gamma_{1}$ and $\Gamma_{2}$ be the strict morphisms induced by the universal properties of $\mathcal{F}_{b} \mathcal{G}$ and $\mathcal{F}_{a} \mathcal{G}$ respectively, in the diagrams
\begin{equation*}
\begin{tikzcd}[row sep=huge, column sep=huge]
& \mathcal{G} \arrow[dl, swap, "I_{b}"] \arrow[d, "I_{a}"] & \mathcal{G} \arrow[d, swap, "I_{a}"] \arrow[dr, "I_{s}"] & \\
\mathcal{F}_{b} \mathcal{G} \arrow[r, swap, dashed, "\Gamma_{1}"] & \mathcal{F}_{a} \mathcal{G} &\mathcal{F}_{a} \mathcal{G} \arrow[r, swap, dashed, "\Gamma_{2}"] & \mathcal{F}_{s} \mathcal{G}
\end{tikzcd}
\end{equation*}
Then by uniqueness of $\Gamma$, we obtain the factorization $\Gamma = \Gamma_{2} \Gamma_{1}$. Since $\Gamma_{2}$ is locally faithful as a trivial consequence of Theorem \ref{coth3}, it suffices to show that $\Gamma_{1}$ is locally faithful.

Recall that by Lemma \ref{austr} there is a locally faithful morphism $S : \mathcal{F}_{b} \mathcal{G} \longrightarrow \mathcal{B}$ into a AU-bigroupoid. By Lemma \ref{strictify}, there exists a strict morphism $T : \mathcal{F}_{b} \mathcal{G} \longrightarrow \mathcal{B}$ along with an icon $\alpha : S \Longrightarrow T$. Note that the presence of this icon guarantees that $T$ is locally faithful as well, by virtue of Lemma \ref{iconfaith}. We use the universal property of $\mathcal{F}_{a} \mathcal{G}$ to find a unique strict morphism $T_{a} ( = \widetilde{T I_{b}}) : \mathcal{F}_{a} \mathcal{G} \longrightarrow \mathcal{B}$ satisfying $T_{a} I_{a} =  T I_{b} $. This gives
\begin{equation*}
T_{a} \Gamma_{1} I_{b} = T_{a} I_{a} =  T I_{b},
\end{equation*}
which implies $T_{a} \Gamma_{1} = T$, by the universal property of $\mathcal{F}_{b} \mathcal{G}$. But then $\Gamma_{1}$ must be locally faithful, as $T$ is.
\end{proof}

\begin{dfn} \label{fordiagdef}
Given a bigroupoid $\mathcal{B}$, we can construct the free bigroupoid $\mathcal{F}_{b} \mathcal{B}$ on its underlying graph and consider the obvious strict morphism (the counit of the adjunction), $J_{b} : \mathcal{F}_{b} \mathcal{B} \longrightarrow \mathcal{B}$. A diagram (consisting of 2-cells), in $\mathcal{B}$ is called a \textit{formal diagram} if it is the image of a diagram in $\mathcal{F}_{b} \mathcal{B}$, under $J_{b}$. If such a formal diagram happens to consist of only a single 2-cell, we will call this 2-cell \textit{canonical}.
\end{dfn}

\begin{thm} \label{fordiagthm}
In a bigroupoid $\mathcal{B}$, every formal diagram commutes.
\end{thm}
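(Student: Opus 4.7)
The plan is to reduce the theorem to a purely syntactic uniqueness statement, namely that $\mathcal{F}_b \mathcal{B}$ has at most one 2-cell between any pair of parallel 1-cells. Since $J_b : \mathcal{F}_b \mathcal{B} \longrightarrow \mathcal{B}$ is a strict morphism it preserves vertical and horizontal composition exactly, so any commuting diagram in $\mathcal{F}_b \mathcal{B}$ is carried to a commuting diagram in $\mathcal{B}$. A formal diagram in $\mathcal{B}$ is by definition the image under $J_b$ of such a diagram, so it suffices to prove that every diagram in $\mathcal{F}_b \mathcal{B}$ commutes, which in turn reduces to showing that any two parallel 2-cells $\alpha, \beta : u \longrightarrow v$ of $\mathcal{F}_b \mathcal{B}$ coincide.

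To prove this uniqueness I would appeal to Theorem \ref{coth1}, applied to the underlying graph of $\mathcal{B}$, which states that the comparison $\Gamma : \mathcal{F}_b \mathcal{B} \longrightarrow \mathcal{F}_s \mathcal{B}$ is a biequivalence; in particular it is locally faithful. The codomain $\mathcal{F}_s \mathcal{B}$ is locally discrete: being a 2-groupoid its coherence data $\mathbf{a}, \mathbf{l}, \mathbf{r}, \mathbf{e}, \mathbf{i}$ are all identities, and no other 2-cells enter the free construction, as already observed inside the proof of Theorem \ref{coth1}. Given $\alpha, \beta$ as above, their images $\Gamma \alpha$ and $\Gamma \beta$ must therefore both be identities on $\Gamma u = \Gamma v$; hence $\Gamma \alpha = \Gamma \beta$, and local faithfulness of $\Gamma$ forces $\alpha = \beta$.

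All the substantive work has already been absorbed into the two earlier coherence theorems, namely Theorem \ref{coth3} for AU-bigroupoids and the bicategorical Theorem \ref{catthm} (the latter entering via Lemma \ref{austr} in the proof of Theorem \ref{coth1}), so no genuine obstacle remains. The only delicate point is being precise about the reduction: one must recall that ``formal diagram in $\mathcal{B}$'' is defined as the image of some diagram in $\mathcal{F}_b \mathcal{B}$, so the argument proceeds by establishing the stronger syntactic fact upstairs in $\mathcal{F}_b \mathcal{B}$ rather than attempting to reason directly with the non-strict data of $\mathcal{B}$.
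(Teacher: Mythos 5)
Your argument is exactly the paper's: both reduce the statement to the fact that any two parallel 2-cells of $\mathcal{F}_b\mathcal{B}$ coincide, deduced from the local discreteness of $\mathcal{F}_s\mathcal{B}$ together with the local faithfulness of $\Gamma$ supplied by Theorem \ref{coth1}, and then push the resulting commuting diagrams forward along the strict morphism $J_b$. Your write-up merely spells out the reduction in more detail; the substance is identical.
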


\begin{proof}
Since $\mathcal{F}_{s} \mathcal{B}$ is locally discrete and $\Gamma : \mathcal{F}_{b} \mathcal{B} \longrightarrow \mathcal{F}_{s} \mathcal{B}$ is locally faithful by Theorem \ref{coth1}, every diagram of 2-cells commutes in $\mathcal{F}_{b} \mathcal{B}$. Trivially, their images under $J_{b}$ commute as well.
\end{proof}

\section{Coherence for morphisms}

In this section we prove a coherence theorem for morphisms of bigroupoids. The proof that we give below is essentially the one given in \cite{MR3076451} for morphisms of bicategories. The approach of \cite{MR3076451} is in turn based on that of \cite{MR1250465}.

\begin{lem}
Given a morphism $F : \mathcal{G} \longrightarrow \mathcal{G}'$ of graphs, the free morphism (of bigroupoids) $\mathcal{F}_{m} F : \mathcal{F}_{b} \mathcal{G} \longrightarrow \mathcal{F}_{m} \mathcal{G}'$ on $F$ exists. We record its universal property:
\begin{itemize}
\item{There exists a commutative square (of graphs)
\begin{equation*}
\begin{tikzcd}[row sep=huge, column sep=huge]
\mathcal{G} \arrow[r, "F"] \arrow[d, swap, "I_{b}"] & \mathcal{G}' \arrow[d, "I_{m}"] \\
\mathcal{F}_{b} \mathcal{G} \arrow[r, swap, "\mathcal{F}_{m}F"] & \mathcal{F}_{m} \mathcal{G}'
\end{tikzcd}
\end{equation*}
such that:}
\item{Given a commutative square (of graphs)
\begin{equation*}
\begin{tikzcd}[row sep=huge, column sep=huge]
\mathcal{G} \arrow[r, "F"] \arrow[d, swap, "R"] & \mathcal{G}' \arrow[d, "S"] \\
\mathcal{A} \arrow[r, swap, "G"] & \mathcal{B}
\end{tikzcd}
\end{equation*}
with $G : \mathcal{A} \longrightarrow \mathcal{B}$ a morphism of bigroupoids, there exists a unique square (of bigroupoids)
\begin{equation*}
\begin{tikzcd}[row sep=huge, column sep=huge]
\mathcal{F}_{b} \mathcal{G} \arrow[r, "\mathcal{F}_{m}F"] \arrow[d, swap, "\widetilde{R}"] & \mathcal{F}_{m} \mathcal{G}' \arrow[d, "\widetilde{S}"] \\
\mathcal{A} \arrow[r, swap, "G"] & \mathcal{B}
\end{tikzcd}
\end{equation*}
such that $R = \widetilde{R} I_{b}$ and $S = \widetilde{S} I_{m}$, with $\widetilde{R}$ and $\widetilde{S}$ strict.}
\end{itemize}
\end{lem}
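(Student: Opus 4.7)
The plan is to construct $\mathcal{F}_m \mathcal{G}'$ by a presentation analogous to Construction \ref{fracns}, and then read off the universal property essentially from the construction. Take the 0-cells of $\mathcal{F}_m \mathcal{G}'$ to be those of $\mathcal{G}'$ (which already contain the $F$-image of the 0-cells of $\mathcal{G}$). Generate 1-cells by: the edges of $\mathcal{G}'$, a formal identity $1_A$ for every 0-cell $A$, and a formal symbol $[u]$ for every 1-cell $u$ of $\mathcal{F}_b \mathcal{G}$; close under $-*-$ and $-^*$, quotient by associativity and unit laws as in Construction \ref{fracns}, and impose $[I_b f] = Ff$ for every edge $f$ of $\mathcal{G}$, so that commutativity of the defining square will hold on 1-cells from $\mathcal{G}$.

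Generate 2-cells by: identities $\mathrm{id}$, the canonical generators $\mathbf{e}$, $\mathbf{i}$ and their inverses (for every 1-cell), a formal symbol $[\alpha] : [u] \longrightarrow [v]$ for every 2-cell $\alpha : u \longrightarrow v$ of $\mathcal{F}_b \mathcal{G}$, and comparison 2-cells $\phi_{v,u} : [v] * [u] \longrightarrow [v*u]$, $\phi_A : 1_{FA} \longrightarrow [1_A]$ and $\phi_u : [u]^* \longrightarrow [u^*]$ together with their inverses; close under $-\circ-$, $-*-$ and $-^*$. Quotient by the congruence generated by all the relations of Construction \ref{fracns} (so the result is a bigroupoid), the functoriality of $u \mapsto [u]$ and $\alpha \mapsto [\alpha]$ on the hom-groupoids of $\mathcal{F}_b \mathcal{G}$, the naturality of the $\phi$'s in $\alpha$, and the diagrams (\ref{coh4}) and (\ref{coh5}) for the $\phi$'s; pair the 1-cell identification $[I_b f] = Ff$ with the 2-cell identification $[\mathrm{id}_{I_b f}] = \mathrm{id}_{Ff}$. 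Define $\mathcal{F}_m F$ on 0-cells by $F$, on 1-cells by $u \mapsto [u]$ and on 2-cells by $\alpha \mapsto [\alpha]$, with the $\phi$'s as coherence 2-cells, and let $I_m : \mathcal{G}' \longrightarrow \mathcal{F}_m \mathcal{G}'$ be the evident inclusion of generators. By construction $\mathcal{F}_m F$ is a morphism of bigroupoids and the defining square commutes.

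For the universal property, given the outer commutative square of graphs with $G$ a morphism of bigroupoids, first apply the universal property of $\mathcal{F}_b \mathcal{G}$ to $R$ to obtain the unique strict extension $\widetilde{R} : \mathcal{F}_b \mathcal{G} \longrightarrow \mathcal{A}$ with $\widetilde{R} I_b = R$. Then define $\widetilde{S} : \mathcal{F}_m \mathcal{G}' \longrightarrow \mathcal{B}$ strictly on generators: an edge $g$ of $\mathcal{G}'$ goes to $Sg$; each $[u]$ goes to $G \widetilde{R}(u)$ and each $[\alpha]$ to $G \widetilde{R}(\alpha)$ (both forced by $\widetilde{S} \circ \mathcal{F}_m F = G \circ \widetilde{R}$); the generators $\mathbf{e}$, $\mathbf{i}$ and identities go to the corresponding 2-cells of $\mathcal{B}$; and each $\phi$ goes to the corresponding coherence 2-cell of $G$ evaluated on $\widetilde{R}(-)$. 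The identification $[I_b f] = Ff$ is compatible, since $\widetilde{S}(Ff) = S(Ff) = G(Rf) = G \widetilde{R}(I_b f)$ by commutativity of the outer square. Well-definedness on the quotient is immediate: every imposed relation is one that holds in any bigroupoid or for any morphism of bigroupoids, so $\mathcal{B}$ and $G$ satisfy them automatically. Strictness is by fiat; commutativity of the triangles $\widetilde{R} I_b = R$ and $\widetilde{S} I_m = S$ is immediate; and uniqueness of $\widetilde{S}$ is forced by strictness together with the two constraints $\widetilde{S} I_m = S$ and $\widetilde{S} \circ \mathcal{F}_m F = G \circ \widetilde{R}$, which pin $\widetilde{S}$ down on every generator.

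The main obstacle is purely combinatorial bookkeeping: one must impose exactly the right family of generators and relations so that $\mathcal{F}_m \mathcal{G}'$ is a bigroupoid, $\mathcal{F}_m F$ is a morphism, and no spurious relations are introduced beyond those automatically satisfied in every receiving datum $(G, \widetilde{R})$. Once one mirrors Construction \ref{fracns} step by step and adjoins the extra layer of pseudofunctorial data, this verification is routine; nothing in the argument is conceptually harder than the construction of $\mathcal{F}_b \mathcal{G}$ itself.
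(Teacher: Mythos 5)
Your overall strategy coincides with the paper's (Construction \ref{cns3}): generate 1-cells from the edges of $\mathcal{G}'$, formal identities, and one new generator for each 1-cell of $\mathcal{F}_b\mathcal{G}$; identify $\mathcal{F}_m F(I_b f)$ with $Ff$ only for edges $f$ of $\mathcal{G}$; adjoin the comparison 2-cells $\phi$ with their naturality and the coherence laws (\ref{coh4}), (\ref{coh5}); and read off $\widetilde{S}$ and its uniqueness from the generators. However, there is one genuine error: you quotient the 1-cells ``by associativity and unit laws as in Construction \ref{fracns}.'' That construction builds the free \emph{AU}-bigroupoid, in which $*$ is strictly associative and unital on 1-cells; if you impose those identifications here, the universal property fails. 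Indeed, if $(w*v)*u = w*(v*u)$ holds as an equality of 1-cells in $\mathcal{F}_m\mathcal{G}'$, then any \emph{strict} $\widetilde{S}$ into a bigroupoid $\mathcal{B}$ would force $(\widetilde{S}w*\widetilde{S}v)*\widetilde{S}u = \widetilde{S}w*(\widetilde{S}v*\widetilde{S}u)$ in $\mathcal{B}$, which is false for a general (non-strict) $\mathcal{B}$. This is exactly where your claim that ``every imposed relation is one that holds in any bigroupoid'' breaks down: strict associativity and unitality of 1-cell composition do not hold in an arbitrary bigroupoid, so $\widetilde{S}$ is not well-defined on your quotient. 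The same issue already arises for $\mathcal{F}_b\mathcal{G}$ itself, which is why the paper's ``analogous to Construction \ref{fracns}'' must be read as dropping the 1-cell quotient and instead adjoining $\mathbf{a}$, $\mathbf{l}$, $\mathbf{r}$ as generating 2-cells.

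The fix is the one the paper takes in Construction \ref{cns3}: keep the 1-cells as formal $(-*-)$, $(-)^*$ expressions, quotiented only by the identification $\mathcal{F}_m F f \sim g$ for edges with $Ff = g$, and add generating 2-cells $\mathbf{a}$, $\mathbf{l}$, $\mathbf{r}$ (with inverses) alongside your $\mathbf{e}$, $\mathbf{i}$ and $\phi$, imposing their naturality together with the coherence laws (\ref{coh1}) and (\ref{coh2}) in addition to (\ref{coh3})--(\ref{coh5}). With that correction, the rest of your argument (the definition of $\widetilde{S}$ on generators, compatibility with the identification $[I_b f]=Ff$ via commutativity of the outer square, and the uniqueness argument) goes through as you wrote it.
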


\begin{cns} \label{cns3}
We sketch the construction of $\mathcal{F}_{m} \mathcal{G}'$, from which it should be clear how $\mathcal{F}_{m} F : \mathcal{F}_{b} \mathcal{G} \longrightarrow \mathcal{F}_{m} \mathcal{G}'$ is defined. We leave it to the reader to fill in the necessary details.

The 0-cells of $\mathcal{F}_{m} \mathcal{G}'$ are the nodes of $\mathcal{G}'$. For every node $A$ of $\mathcal{G}'$, we add a new edge $1_{A} : A \longrightarrow A$ and for every 1-cell $f : B \longrightarrow C$ of $\mathcal{FG}$, we add a new edge $\mathcal{F}_{m}Ff : FB \longrightarrow FC$. We formally close the edges under the operations $- * -$ and $-^{*}$, taking into account the sources and targets in the obvious way. We quotient out by the congruence relation generated by the requirement that if edges $f$ of $\mathcal{G}$ and $g$ of $\mathcal{G}'$ satisfy $Ff = g$, then $\mathcal{F}_{m}Ff \sim g$. The 1-cells of $\mathcal{F}_{m} \mathcal{G}'$ are the equivalence classes under this quotient.

For 1-cells 
\begin{equation*}
A \overset{f}{\longrightarrow} B \overset{g}{\longrightarrow} C \overset{h}{\longrightarrow} D
\end{equation*}
of $\mathcal{F}_{m} \mathcal{G}'$, we create 2-cells $\mathbf{a}_{h, g, f}$, $\mathbf{l}_{f}$, $\mathbf{r}_{f}$, $\mathbf{e}_{f}$, $\mathbf{i}_{f}$, $\mathbf{a}_{h, g, f}^{-1}$, $\mathbf{l}_{f}^{-1}$, $\mathbf{r}_{f}^{-1}$, $\mathbf{e}_{f}^{-1}$, $\mathbf{i}_{f}^{-1}$ and $\mathrm{id}_{f}$. For 1-cells 
\begin{equation*}
A \overset{f}{\longrightarrow} B \overset{g}{\longrightarrow} C
\end{equation*}
of $\mathcal{FG}$, we add 2-cells $\phi_{g,f}$, $\phi_{A}$, $\phi_{f}$, $\phi_{g,f}^{-1}$, $\phi_{A}^{-1}$ and $\phi_{f}^{-1}$. We close the 2-cells under the operations $- * -$, $-^{*}$ and $- \circ -$ (whenever these operations make sense). We quotient out by the congruence relation generated by the requirements that $- \circ -$ is associative; $\mathrm{id}$ acts as identity; $- ^{-1}$ acts as inverse; $- * -$ and $-^{*}$ are functors; $\mathbf{a}$, $\mathbf{l}$, $\mathbf{r}$, $\mathbf{e}$, $\mathbf{i}$ and $\phi$ are natural; the coherence laws (\ref{coh1}), (\ref{coh2}), (\ref{coh3}), (\ref{coh4}) and (\ref{coh5}) hold; and $\mathcal{F}_{m} F$ is locally a functor. The 2-cells of $\mathcal{F}_{m} \mathcal{G}'$ are the equivalence classes under this quotient.
\end{cns}

\begin{lem} \label{2sqr}
Consider, for $i = 1,2$, the commuting squares (of graphs)
\begin{equation} \label{sifisq}
\begin{tikzcd}[row sep=huge, column sep=huge]
\mathcal{G} \arrow[r, "G"] \arrow[d, swap, "R"] & \mathcal{G}' \arrow[d, "S"] \\
\mathcal{A} \arrow[r, swap, "F_{i}"] & \mathcal{B}
\end{tikzcd}
\end{equation}
with $(F_{i}, \phi_{i}) : \mathcal{A} \longrightarrow \mathcal{B}$ morphisms of bigroupoids. Let
\begin{equation*}
\begin{tikzcd}[row sep=huge, column sep=huge]
\mathcal{F}_{b} \mathcal{G} \arrow[r, "\mathcal{F}_{m} G"] \arrow[d, swap, "\widetilde{R}"] & \mathcal{F}_{m} \mathcal{G}' \arrow[d, "\widetilde{S}_{i}"] \\
\mathcal{A} \arrow[r, swap, "F_{i}"] & \mathcal{B}
\end{tikzcd}
\end{equation*}
be the squares induced by the universal property of $\mathcal{F}_{m} G$. (Note that in general the $\widetilde{S}_{i}$ are distinct, since they depend on the $F_{i}$.) Assume that $F_{1}$ and $F_{2}$ agree on 0-cells. Then if $\alpha : F_{1} \Longrightarrow F_{2}$ is an icon such that
\begin{equation} \label{arbg}
\alpha R = \mathrm{id}
\end{equation}
as $\mathcal{G}_{0} \times \mathcal{G}_{0}$-indexed families of isomorphisms, there is an icon $\beta : \widetilde{S}_{1} \Longrightarrow \widetilde{S}_{2}$ such that
\begin{equation*}
\alpha \widetilde{R} = \beta \mathcal{F}_{m} G
\end{equation*}
as icons.
\end{lem}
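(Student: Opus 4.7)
The plan is to reduce the construction of $\beta$ to a single application of the universal property of $\mathcal{F}_m G$, by encoding both morphisms $F_1, F_2$ together with the icon $\alpha$ as a single morphism into an auxiliary ``icon classifier'' bigroupoid. First I would build a bigroupoid $\mathcal{D}$ whose 0-cells are those of $\mathcal{B}$, whose 1-cells $A \longrightarrow B$ are triples $(f_1, \gamma, f_2)$ consisting of two parallel 1-cells in $\mathcal{B}$ and an (invertible) 2-cell $\gamma : f_1 \longrightarrow f_2$, and whose 2-cells $(f_1, \gamma, f_2) \longrightarrow (g_1, \delta, g_2)$ are pairs $(\eta_1, \eta_2)$ making the square $\delta \circ \eta_1 = \eta_2 \circ \gamma$ commute. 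The composition, identities, inverses, and all coherence isomorphisms $\mathbf{a}, \mathbf{l}, \mathbf{r}, \mathbf{e}, \mathbf{i}$ are taken componentwise from $\mathcal{B}$, so the two projections $\pi_1, \pi_2 : \mathcal{D} \longrightarrow \mathcal{B}$ are strict morphisms and there is a canonical icon $\delta : \pi_1 \Longrightarrow \pi_2$ with components $\delta_{(f_1, \gamma, f_2)} = \gamma$.

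Next I would encode $(F_1, F_2, \alpha)$ as a single morphism $L : \mathcal{A} \longrightarrow \mathcal{D}$, defined by $L A = F_1 A = F_2 A$ on 0-cells, $L f = (F_1 f, \alpha_f, F_2 f)$ on 1-cells and $L \eta = (F_1 \eta, F_2 \eta)$ on 2-cells, with structure maps inherited from $\phi_1$ and $\phi_2$. The naturality of $\alpha$ ensures that $L \eta$ really is a 2-cell of $\mathcal{D}$, and the icon axioms (\ref{iconax}) ensure that $L$ is a morphism of bigroupoids satisfying $\pi_i L = F_i$ and $\delta L = \alpha$. In parallel, define a morphism of graphs $T : \mathcal{G}' \longrightarrow \mathcal{D}$ by $T X = S X$ on 0-cells and $T f = (S f, \mathrm{id}_{Sf}, S f)$ on 1-cells; the hypothesis $\alpha R = \mathrm{id}$ together with the commutativity of (\ref{sifisq}) for $i = 1, 2$ guarantees that the square
\begin{equation*}
\begin{tikzcd}[row sep=huge, column sep=huge]
\mathcal{G} \arrow[r, "G"] \arrow[d, swap, "R"] & \mathcal{G}' \arrow[d, "T"] \\
\mathcal{A} \arrow[r, swap, "L"] & \mathcal{D}
\end{tikzcd}
\end{equation*}
of graphs commutes. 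The universal property of $\mathcal{F}_m G$ then produces strict morphisms $\widetilde{R} : \mathcal{F}_b \mathcal{G} \longrightarrow \mathcal{A}$ and $\widetilde{T} : \mathcal{F}_m \mathcal{G}' \longrightarrow \mathcal{D}$ lifting this square; since the left leg only depends on $R$, the induced $\widetilde{R}$ here coincides with the one appearing in the statement of the lemma.

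Finally I would set $\beta := \delta \widetilde{T}$. Composing with $\pi_i$, both $\pi_i \widetilde{T}$ and $\widetilde{S}_i$ are strict morphisms $\mathcal{F}_m \mathcal{G}' \longrightarrow \mathcal{B}$ extending $S$ along $I_m$ and fitting into the same commutative square with $\widetilde{R}$ and $F_i$, so by the uniqueness clause of the universal property they agree. Hence $\beta$ is an icon $\widetilde{S}_1 \Longrightarrow \widetilde{S}_2$, and the desired equality $\beta \, \mathcal{F}_m G = \alpha \widetilde{R}$ follows from the computation $\delta \widetilde{T} \, \mathcal{F}_m G = \delta L \widetilde{R} = \alpha \widetilde{R}$. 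The only substantial work is the routine but tedious verification that $\mathcal{D}$ is a bigroupoid and that $L$ is a morphism: both reduce to checking axioms componentwise in $\mathcal{B}$, using the bigroupoid axioms of $\mathcal{B}$ and the three icon axioms for $\alpha$ respectively. I expect this bookkeeping, rather than any conceptual difficulty, to be the main obstacle.
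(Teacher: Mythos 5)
Your proposal is correct and follows essentially the same route as the paper: your auxiliary bigroupoid $\mathcal{D}$ is exactly the paper's $\mathcal{B}^{I}$, your $L$ and $\delta$ are its $F$ and $\pi$, and the definition $\beta = \delta\widetilde{T}$ together with the identification $\pi_{i}\widetilde{T} = \widetilde{S}_{i}$ via the uniqueness clause is precisely the paper's argument.
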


\begin{proof}
We construct a new bigroupoid $\mathcal{B}^{I}$ out of $\mathcal{B}$. The 0-cells of $\mathcal{B}^{I}$ are the same as those of $\mathcal{B}$. A 1-cell in $\mathcal{B}^{I}$, from $A$ to $B$, is a 2-cell $\gamma : g_{1} \longrightarrow g_{2}$ in $\mathcal{B}$ with $g_{1}, g_{2} : A \longrightarrow B$. For convenience, we make the domain and codomain explicit in our notation $(g_{1}, g_{2}, \gamma)$ for such a 1-cell. A 2-cell in $\mathcal{B}^{I}$, from $(g_{1}, g_{2}, \gamma)$ to $(h_{1}, h_{2}, \delta)$, is a pair $(\sigma_{1}, \sigma_{2})$ of 2-cells in $\mathcal{B}$ such that the square
\begin{equation*}
\begin{tikzcd}[row sep=huge, column sep=huge]
g_{1} \arrow[r, "\sigma_{1}"] \arrow[d, swap, "\gamma"] & h_{1} \arrow[d, "\delta"] \\
g_{2} \arrow[r, swap, "\sigma_{2}"] & h_{2}
\end{tikzcd}
\end{equation*}
commutes. Composition of 2-cells is done pointwise.

The identity 1-cell on a 0-cell $A$ is given by $\mathrm{id}_{1_{A}}$. The operations $- * -$ and $-^{*}$ on 1-cells of $\mathcal{B}^{I}$ are given by these same operations in $\mathcal{B}$ (but as 2-cells there). The operations $- * -$ and $-^{*}$ on 2-cells of $\mathcal{B}^{I}$ are also the same as in $\mathcal{B}$ (pointwise). The 2-cells $\mathbf{a}$ are taken from $\mathcal{B}$, as in the commuting square
\begin{equation*}
\begin{tikzcd}[row sep=huge, column sep=huge]
(k_{1}  h_{1})  g_{1} \arrow[r, "\mathbf{a}"] \arrow[d, swap, "(\epsilon * \delta) * \gamma"] & k_{1}  (h_{1}  g_{1}) \arrow[d, "\epsilon * ( \delta * \gamma)"] \\
(k_{2}  h_{2})  g_{2} \arrow[r, swap, "\mathbf{a}"] & k_{2}  (h_{2}  g_{2})
\end{tikzcd}
\end{equation*}
Similar commuting squares exist for $\mathbf{l}$, $\mathbf{r}$, $\mathbf{e}$ and $\mathbf{i}$. Commutativity of (\ref{coh1}), (\ref{coh2}) and (\ref{coh3}) in $\mathcal{B}^{I}$ follows directly from their commutativity in $\mathcal{B}$.

Note that there are two strict morphisms of bigroupoids $P_{i} : \mathcal{B}^{I} \longrightarrow \mathcal{B}$, for $i = 1, 2$, which
\begin{itemize}
\item{send a 0-cell $A$ to $A$,
\item{send a 1-cell $(g_{1}, g_{2}, \gamma)$ to $g_{i}$}
\item{and send a 2-cell $(\sigma_{1}, \sigma_{2})$ to $\sigma_{i}$,}}
\end{itemize}
together with an icon $\pi : P_{1} \Longrightarrow P_{2}$, whose component at a 1-cell $(g_{1}, g_{2}, \gamma) : A \longrightarrow B$ is given by
\begin{equation*}
(\pi_{A, B})_{(g_{1}, g_{2}, \gamma)} = \gamma : g_{1} \longrightarrow g_{2}.
\end{equation*}
The icon axioms (\ref{iconax}) are easily seen to hold.

The icon $\alpha : F_{1} \Longrightarrow F_{2}$ induces a morphism of bigroupoids $(F, \phi) : \mathcal{A} \longrightarrow \mathcal{B}^{I}$, which
\begin{itemize}
\item{sends a 0-cell $A$ to $F_{1}A$ (which is the same as $F_{2}A)$},
\item{sends a 1-cell $f : A \longrightarrow B$ to $(\alpha_{A,B})_{f}$},
\item{sends a 2-cell $\sigma$ to $(F_{1} \sigma, F_{2} \sigma)$}
\item{and has $\phi = (\phi_{1}, \phi_{2})$.}
\end{itemize}
The fact that the $\phi$ are legitimate 2-cells follows from the icon axioms (\ref{iconax}). Commutativity of (\ref{coh4}) and (\ref{coh5}) for $\phi$ follows from the fact that these diagrams commute for $\phi_{1}$ and $\phi_{2}$.

There is also an obvious morphisms of graphs $T : \mathcal{G}' \longrightarrow \mathcal{B}^{I}$, induced by $S$. This gives a square, which commutes by (\ref{sifisq}) and (\ref{arbg}) and which produces a second square
\begin{equation*}
\begin{tikzcd}[row sep=huge, column sep=huge]
\mathcal{G} \arrow[r, "G"] \arrow[d, swap, "R"] & \mathcal{G}' \arrow[d, "T"] \\
\mathcal{A} \arrow[r, swap, "F"] & \mathcal{B}^{I}
\end{tikzcd}
\qquad
\begin{tikzcd}
{} \arrow[r, squiggly] & {}
\end{tikzcd}
\qquad
\begin{tikzcd}[row sep=huge, column sep=huge]
\mathcal{F}_{b} \mathcal{G} \arrow[r, "\mathcal{F}_{m} G"] \arrow[d, swap, "\widetilde{R}"] & \mathcal{F}_{m} \mathcal{G}' \arrow[d, "\widetilde{T}"] \\
\mathcal{A} \arrow[r, swap, "F"] & \mathcal{B}^{I}
\end{tikzcd}
\end{equation*}
via the universal property of $\mathcal{F}_{m} G$. It is clear that $P_{i} F = F_{i}$, so
\begin{equation*}
P_{i} \widetilde{T} \mathcal{F}_{m} G = P_{i} F \widetilde{R} = F_{i} \widetilde{R},
\end{equation*}
which implies that $P_{i} \widetilde{T} = \widetilde{S}_{i}$ by the universal property of $\mathcal{F}_{m} G$. This allows us to define
\begin{equation*}
\beta = \pi \widetilde{T} : \widetilde{S}_{1} \Longrightarrow \widetilde{S}_{2}.
\end{equation*}
One easily verifies that $\pi F = \alpha$, by definition of $\pi$ and $F$, which shows that
\begin{equation*}
\beta \mathcal{F}_{m} G = \pi \widetilde{T} \mathcal{F}_{m} G = \pi F \widetilde{R} = \alpha \widetilde{R},
\end{equation*}
as needed.
\end{proof}

\begin{thm} \label{coth2}
For every morphism of graphs $F : \mathcal{G} \longrightarrow \mathcal{G}'$, the strict morphism $\Delta: \mathcal{F}_{m} \mathcal{G}' \longrightarrow \mathcal{F}_{s} \mathcal{G} '$ induced by the universal property of $\mathcal{F}_{m} F$ in the diagram
\begin{equation*}
\begin{tikzcd}[row sep=huge, column sep=huge]
\mathcal{G} \arrow[r, "F"] \arrow[d, swap, "I_{b}"] \arrow[dd, swap, bend right=50, "I_{s}"] & \mathcal{G}' \arrow[d, "I_{m}"] \arrow[dd, bend left=50, "I_{s}'"] \\
\mathcal{F}_{b} \mathcal{G} \arrow[r, "\mathcal{F}_{m} F"] \arrow[d, swap, dashed, "\Gamma"] & \mathcal{F}_{m} \mathcal{G}' \arrow[d, dashed, "\Delta"] \\
\mathcal{F}_{s} \mathcal{G} \arrow[r, swap, "\mathcal{F}_{s} F"] & \mathcal{F}_{s} \mathcal{G}'
\end{tikzcd}
\end{equation*}
is a biequivalence.
\end{thm}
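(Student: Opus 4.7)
The plan is to verify the four biequivalence conditions for $\Delta$, parallel to the proof of Theorem \ref{coth1}. Surjectivity on $0$-cells is immediate: $\mathcal{F}_m \mathcal{G}'$ and $\mathcal{F}_s \mathcal{G}'$ share their $0$-cells (the nodes of $\mathcal{G}'$) and $\Delta$ is the identity on them. Local essential surjectivity on $1$-cells follows by induction on the generation of $1$-cells in $\mathcal{F}_s \mathcal{G}'$: the generators (edges of $\mathcal{G}'$ and added units) have preimages under $\Delta$, and $\Delta$ being strict commutes with $-*-$ and $-^{*}$. Local fullness exploits the local discreteness of $\mathcal{F}_s \mathcal{G}'$ as in \ref{coth1}: a coincidence $\Delta u = \Delta v$ is witnessed by a chain of generators of the $1$-cell congruence in Construction \ref{fscns}, and each link is realized by an explicit $2$-cell in $\mathcal{F}_m \mathcal{G}'$ via Construction \ref{cns3} (the associators, unitors, $\phi$-data etc.\ are all present as genuine $2$-cells).

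The substantive step is local faithfulness. My plan mirrors the treatment of $\Gamma_1$ in \ref{coth1}, with Lemma \ref{2sqr} replacing Lemma \ref{strictify} whenever pseudofunctor data is involved. Apply Lemma \ref{austr} to $\mathcal{F}_m \mathcal{G}'$ to obtain an AU-bigroupoid $\mathcal{C}$ and a biequivalence $(K, \kappa) : \mathcal{F}_m \mathcal{G}' \to \mathcal{C}$; then $K$ is locally faithful. Feed the composite $K \circ \mathcal{F}_m F : \mathcal{F}_b \mathcal{G} \to \mathcal{C}$, which is a pseudofunctor out of a free bigroupoid, through Lemma \ref{strictify} to obtain a strict morphism $T : \mathcal{F}_b \mathcal{G} \to \mathcal{C}$ and an icon $\alpha : K \circ \mathcal{F}_m F \Longrightarrow T$ satisfying $\alpha I_b = \mathrm{id}$ and $T I_b = K \circ \mathcal{F}_m F \circ I_b$.

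Next, invoke Lemma \ref{2sqr} with the data $R = I_b$, $S = K \circ I_m$, $G = F$, $F_1 = K \circ \mathcal{F}_m F$, $F_2 = T$: the trivialization hypothesis $\alpha R = \mathrm{id}$ is exactly $\alpha I_b = \mathrm{id}$, so it delivers strict morphisms $\widetilde{S}_1, \widetilde{S}_2 : \mathcal{F}_m \mathcal{G}' \to \mathcal{C}$ together with an icon $\beta : \widetilde{S}_1 \Longrightarrow \widetilde{S}_2$. A routine bookkeeping argument---essentially a second invocation of \ref{2sqr} with the trivial icon $F_1 = F_2 = K \circ \mathcal{F}_m F$, plus the uniqueness clause in the universal property of $\mathcal{F}_m F$---identifies $\widetilde{S}_1$ with $K$ up to an icon. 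Composing icons and applying Lemma \ref{iconfaith}, the strict morphism $\widetilde{S}_2$ inherits local faithfulness from $K$. The final move is to factor $\widetilde{S}_2$ through $\Delta$: use the universal property of $\mathcal{F}_a \mathcal{G}'$ (applied to $\widetilde{S}_2 \circ I_m : \mathcal{G}' \to \mathcal{C}$, which makes sense since $\mathcal{C}$ is AU) to obtain a strict $W_a : \mathcal{F}_a \mathcal{G}' \to \mathcal{C}$, and then, noting that the composite $W_a \circ \Gamma_{1, \mathcal{G}'}$ and $\widetilde{S}_2$ are strict morphisms from $\mathcal{F}_m \mathcal{G}'$ with matching restrictions, check they coincide; since $\Gamma_{2, \mathcal{G}'}$ is locally faithful by Theorem \ref{coth3}, local faithfulness of $\widetilde{S}_2$ transfers to $\Delta$ along the factorization.

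The hard part will be this last factoring step, since $\mathcal{C}$ is only AU while $\Delta$ lands in the strict $\mathcal{F}_s \mathcal{G}'$: bridging this requires the detour through $\mathcal{F}_a \mathcal{G}'$ and a careful appeal to Theorem \ref{coth3} (which makes $\mathcal{C}$ behave essentially strictly, at the level of canonical $2$-cells). Equally delicate is the identification of $\widetilde{S}_1$ with $K$ up to icon, as the universal property of $\mathcal{F}_m F$ only pins down strict morphisms; this is precisely the situation Lemma \ref{2sqr} is tailored for, but one has to set up the correct pair of squares and verify the trivialization on $R$ before applying it. Modulo these bookkeeping issues, the remainder of the verification is routine.
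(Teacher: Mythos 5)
Your overall architecture (handle surjectivity, local surjectivity and local fullness as in Theorem \ref{coth1}; reduce local faithfulness to Lemma \ref{strictify}, Lemma \ref{2sqr} and Lemma \ref{iconfaith}) is the right one, but the execution of the local-faithfulness step has a genuine gap at the point you yourself flag as ``routine bookkeeping.'' You apply Lemma \ref{2sqr} with $F_1 = K \circ \mathcal{F}_m F$ and $F_2 = T$, obtaining strict morphisms $\widetilde{S}_1, \widetilde{S}_2 : \mathcal{F}_m \mathcal{G}' \to \mathcal{C}$ and an icon $\beta : \widetilde{S}_1 \Rightarrow \widetilde{S}_2$, and then claim that $\widetilde{S}_1$ can be identified with $K$ up to an icon by ``a second invocation of \ref{2sqr} with the trivial icon plus the uniqueness clause.'' Neither tool can deliver this: Lemma \ref{2sqr} applied to the identity icon on $K \circ \mathcal{F}_m F$ only produces an icon from $\widetilde{S}_1$ to itself, and the uniqueness clause of the universal property of $\mathcal{F}_m F$ only pins down \emph{strict} morphisms, whereas $K$ (coming from Lemma \ref{austr}) is not strict and is not in the image of that universal property at all. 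Producing an icon $\widetilde{S}_1 \Rightarrow K$ would require comparing a strict morphism with a pseudofunctor on all generated 1-cells of $\mathcal{F}_m \mathcal{G}'$ and verifying the icon axioms against the data $\kappa$ --- essentially a coherence argument of the same order as the theorem being proved. Since local faithfulness of $\widetilde{S}_2$ is inherited from $K$ precisely through this unproved identification, the chain breaks here. (Your final factorization step also has a domain mismatch: $\Gamma_{1,\mathcal{G}'}$ has source $\mathcal{F}_b \mathcal{G}'$, not $\mathcal{F}_m \mathcal{G}'$, so comparing $W_a \circ \Gamma_{1,\mathcal{G}'}$ with $\widetilde{S}_2$ requires first interposing a morphism $\mathcal{F}_m \mathcal{G}' \to \mathcal{F}_b \mathcal{G}'$ that your proposal never constructs.)

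The paper sidesteps the problem entirely by keeping the target equal to $\mathcal{F}_m \mathcal{G}'$ itself: it strictifies $\mathcal{F}_m F$ to get $S$ with icon $\alpha$ and $\alpha I_b = \mathrm{id}$, then applies Lemma \ref{2sqr} with $F_1 = \mathcal{F}_m F$ and $F_2 = S$. The induced $\widetilde{S}_1$ is then \emph{exactly} the identity on $\mathcal{F}_m \mathcal{G}'$ (the identity is strict and satisfies the defining equations, so uniqueness applies on the nose --- no icon comparison with a pseudofunctor is needed), so the icon $\beta : \mathrm{id} \Rightarrow E$ makes $E := \widetilde{S}_2$ locally fully faithful by Lemma \ref{iconfaith}. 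The remaining work is genuinely equational: one constructs $\Delta_1 : \mathcal{F}_m \mathcal{G}' \to \mathcal{F}_b \mathcal{G}'$ from the universal property, checks $\Delta = \Gamma' \circ \Delta_1$ and $E = \widetilde{I}_m \circ \Delta_1$ by comparing strict morphisms on generators, and concludes that $\Delta_1$ is locally faithful (from $E$) and hence $\Delta$ is (from Theorem \ref{coth1} applied to $\Gamma'$). If you want to salvage your version, replace your choice of $F_1$ by $\mathcal{F}_m F$ with target $\mathcal{F}_m \mathcal{G}'$ so that $\widetilde{S}_1 = \mathrm{id}$, and drop the detour through $\mathcal{C}$ and $\mathcal{F}_a \mathcal{G}'$, which is not needed at this stage.
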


\begin{proof}
Surjectivity on 0-cells, local surjectivity and local fullness for $\Delta$ can be proven in the same way as was done for $\Gamma$ in the proof of Theorem \ref{coth1}. All that is left to show is that $\Delta$ is locally faithful.

By Lemma \ref{strictify}, there exists a strict morphism $S : \mathcal{F}_{b} \mathcal{G} \longrightarrow \mathcal{F}_{m} \mathcal{G}'$ along with an icon $\alpha : \mathcal{F}_{m} F \Longrightarrow S$, such that $S \circ I_{b} = \mathcal{F}_{m} F \circ I_{b}$ and $\alpha I_{b} = \mathrm{id}$. Since $S \circ I_{b} = \mathcal{F}_{m} F \circ I_{b}$, we have two commuting squares
\begin{equation*}
\begin{tikzcd}[row sep=huge, column sep=huge]
\mathcal{G} \arrow[r, "F"] \arrow[d, swap, "I_{b}"] & \mathcal{G}' \arrow[d, "I_{m}"] & \mathcal{G} \arrow[r, "F"] \arrow[d, swap, "I_{b}"] & \mathcal{G}' \arrow[d, "I_{m}"] \\
\mathcal{F}_{b} \mathcal{G} \arrow[r, swap, "S"] & \mathcal{F}_{m} \mathcal{G}' & \mathcal{F}_{b} \mathcal{G} \arrow[r, swap, "\mathcal{F}_{m} F"] & \mathcal{F}_{m} \mathcal{G}'
\end{tikzcd}
\end{equation*}
The equality $\alpha I = \mathrm{id}$ shows that we may apply Lemma \ref{2sqr} to find an icon $\beta : \mathrm{id} \Longrightarrow E$, where $E$ is produced by the universal property of $\mathcal{F}_{m} F$ via
\begin{equation*}
\begin{tikzcd}[row sep=huge, column sep=huge]
\mathcal{G} \arrow[r, "F"] \arrow[d, swap, "I_{b}"] & \mathcal{G}' \arrow[d, "I_{m}"] \\
\mathcal{F}_{b} \mathcal{G} \arrow[r, swap, "S"] & \mathcal{F}_{m} \mathcal{G}'
\end{tikzcd}
\qquad
\begin{tikzcd}
{} \arrow[r, squiggly] & {}
\end{tikzcd}
\qquad
\begin{tikzcd}[row sep=huge, column sep=huge]
\mathcal{F}_{b} \mathcal{G} \arrow[r, "\mathcal{F}_{m} F"] \arrow[d, swap, "\mathrm{id}"] & \mathcal{F}_{m} \mathcal{G}' \arrow[d, "E"] \\
\mathcal{F}_{b} \mathcal{G} \arrow[r, swap, "S"] & \mathcal{F}_{m} \mathcal{G}'
\end{tikzcd}
\end{equation*}
Since the identity morphism is locally fully faithful, so is $E$ by Lemma \ref{iconfaith}.

Now the universal property of $\mathcal{F}_{m} F$ induces a square
\begin{equation*}
\begin{tikzcd}[row sep=huge, column sep=huge]
\mathcal{G} \arrow[r, "F"] \arrow[d, swap, "I_{b}"] & \mathcal{G}' \arrow[d, "I_{b}'"] \\
\mathcal{F}_{b} \mathcal{G} \arrow[r, swap, "\mathcal{F}_{b} F"] & \mathcal{F}_{b} \mathcal{G}'
\end{tikzcd}
\qquad
\begin{tikzcd}
{} \arrow[r, squiggly] & {}
\end{tikzcd}
\qquad
\begin{tikzcd}[row sep=huge, column sep=huge]
\mathcal{F}_{b} \mathcal{G} \arrow[r, "\mathcal{F}_{m} F"] \arrow[d, swap, "\mathrm{id}"] & \mathcal{F}_{m} \mathcal{G}' \arrow[d, "\Delta_{1}"] \\
\mathcal{F}_{b} \mathcal{G} \arrow[r, swap, "\mathcal{F}_{b} F"] & \mathcal{F}_{b} \mathcal{G}'
\end{tikzcd}
\end{equation*}
Consider $\Gamma' : \mathcal{F}_{b} \mathcal{G}' \longrightarrow \mathcal{F}_{s} \mathcal{G}'$. We claim that $\Gamma' \circ \Delta_{1} = \Delta$. First note that
\begin{equation*}
\Gamma' \circ \mathcal{F}_{b} F \circ I_{b} = \Gamma' \circ I_{b}' \circ F = I_{s}' \circ F = \mathcal{F}_{s} F \circ I_{s} = \mathcal{F}_{s} F \circ \Gamma \circ I_{b},
\end{equation*}
since $I_{b}$ and $I_{b}'$ are components of the unit for $\mathcal{F}_{b}$; by definition of $\Gamma'$; since $I_{s}$ and $I_{s}'$ are components of the unit for $\mathcal{F}_{s}$; and by definition of $\Gamma$. The universal property of $\mathcal{F}_{b} \mathcal{G}$ now dictates that $\Gamma' \circ \mathcal{F}_{b} F = \mathcal{F}_{s} F \circ \Gamma$ and thus
\begin{equation} \label{deltaeq1}
\Gamma' \circ \Delta_{1} \circ \mathcal{F}_{m} F = \Gamma' \circ \mathcal{F}_{b} F = \mathcal{F}_{s} F \circ \Gamma.
\end{equation}
Moreover,
\begin{equation} \label{deltaeq2}
\Gamma' \circ \Delta_{1} \circ I_{m} = \Gamma' \circ I_{b}' = I_{s}'
\end{equation}
by definition of $\Delta_{1}$ and $\Gamma'$. But now equations (\ref{deltaeq1}) and (\ref{deltaeq2}) combined imply $\Gamma' \circ \Delta_{1} = \Delta$, using the universal property of $\mathcal{F}_{m} F$. The upshot of this is that for $\Delta$ to be locally faithful, it suffices that $\Delta_{1}$ is, as $\Gamma'$ is locally faithful by Theorem \ref{coth1}.

Let $\widetilde{I}_{m} : \mathcal{F}_{b} \mathcal{G}' \longrightarrow \mathcal{F}_{m} \mathcal{G}'$ be the unique strict morphism such that $I_{m} = \widetilde{I}_{m} I_{b}'$, given by the universal property of $\mathcal{F}_{b} \mathcal{G}'$. We claim that $E = \widetilde{I}_{m} \circ \Delta_{1}$. This will finish the proof, because we have established that $E$ is locally faithful. Note that
\begin{equation*}
\widetilde{I}_{m} \circ \mathcal{F}_{b} F \circ I_{b} = \widetilde{I}_{m} \circ I_{b}' \circ F = I_{m} \circ F = \mathcal{F}_{m} F \circ I_{b} = S \circ I_{b},
\end{equation*}
since $I_{b}$ and $I_{b}'$ are components of the unit for $\mathcal{F}_{b}$; by definition of $\widetilde{I}_{m}$; by definition of $\mathcal{F}_{m} F$; and by choice of $S$. Hence $\widetilde{I}_{m} \circ \mathcal{F}_{b} F = S$ by the universal property of $\mathcal{F}_{b} \mathcal{G}$ and thus
\begin{equation} \label{keq1}
\widetilde{I}_{m} \circ \Delta_{1} \circ \mathcal{F}_{m} F = \widetilde{I}_{m} \circ \mathcal{F}_{b} F = S.
\end{equation}
Moreover,
\begin{equation} \label{keq2}
\widetilde{I}_{m} \circ \Delta_{1} \circ I_{m} = \widetilde{I}_{m} \circ I_{b}' = I_{m}
\end{equation}
by definition of $\Delta_{1}$ and $\widetilde{I}_{m}$. Equations (\ref{keq1}) and (\ref{keq2}) combined imply $E = \widetilde{I}_{m} \circ \Delta_{1}$, using the universal property of $\mathcal{F}_{m} F$.
\end{proof}

\begin{dfn}
Given a morphism of bigroupoids $(F, \phi) : \mathcal{A} \longrightarrow \mathcal{B}$, we can construct the free morphism $\mathcal{F}_{m} F : \mathcal{F}_{b} \mathcal{A} \longrightarrow \mathcal{F}_{m} \mathcal{B}$ on the underlying morphism of graphs and consider the obvious strict morphism (a component of the counit of the adjunction), $J_{m} : \mathcal{F}_{m} \mathcal{B} \longrightarrow \mathcal{B}$. A diagram (consisting of 2-cells), in $\mathcal{B}$ is called a \textit{formal $\phi$-diagram} if it is the image of a diagram in $\mathcal{F}_{m} \mathcal{B}$, under $J_{m}$.
\end{dfn}

\begin{thm} \label{phifordiagthm}
Let $(F, \phi) : \mathcal{A} \longrightarrow \mathcal{B}$ be a morphism of bigroupoids. Then every formal $\phi$-diagram commutes in $\mathcal{B}$.
\end{thm}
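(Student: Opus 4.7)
The strategy parallels the deduction of Theorem \ref{fordiagthm} from Theorem \ref{coth1}: I would reduce the claim to showing that every diagram of 2-cells in $\mathcal{F}_m \mathcal{B}$ commutes, and then exploit the local discreteness of a free 2-groupoid via a locally faithful strict morphism. By definition, a formal $\phi$-diagram is the image under $J_m : \mathcal{F}_m \mathcal{B} \longrightarrow \mathcal{B}$ of a diagram in $\mathcal{F}_m \mathcal{B}$, and since $J_m$ is strict it preserves composition of 2-cells on the nose. So it suffices to verify that every diagram of 2-cells in $\mathcal{F}_m \mathcal{B}$ already commutes.

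For this I would apply Theorem \ref{coth2} to the morphism of underlying graphs $F : \mathcal{A} \longrightarrow \mathcal{B}$, which yields that the induced strict morphism $\Delta : \mathcal{F}_m \mathcal{B} \longrightarrow \mathcal{F}_s \mathcal{B}$ is a biequivalence, and in particular locally faithful. The free 2-groupoid $\mathcal{F}_s \mathcal{B}$ is locally discrete by Construction \ref{fscns} (all structural 2-cells $\mathbf{a}, \mathbf{l}, \mathbf{r}, \mathbf{e}, \mathbf{i}$ are identities there, so each hom-groupoid collapses to a discrete one), so any two parallel 2-cells of $\mathcal{F}_m \mathcal{B}$ automatically have the same image under $\Delta$; local faithfulness then forces them to coincide. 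Consequently every diagram of 2-cells in $\mathcal{F}_m \mathcal{B}$ trivially commutes, and pushing this down along $J_m$ gives the theorem.

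The substantive work has already been carried out earlier in this appendix; the only genuine obstacle would have been Theorem \ref{coth2} itself, especially the transport-of-icons argument across free constructions encapsulated in Lemma \ref{2sqr}, together with the base coherence Theorem \ref{coth1}. Once those are in hand, the present statement is a one-step corollary, completely analogous in form to Theorem \ref{fordiagthm}.
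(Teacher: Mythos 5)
Your proposal is correct and follows exactly the paper's own argument: apply Theorem \ref{coth2} to obtain local faithfulness of $\Delta : \mathcal{F}_{m}\mathcal{B} \longrightarrow \mathcal{F}_{s}\mathcal{B}$, use local discreteness of $\mathcal{F}_{s}\mathcal{B}$ to conclude that every diagram of 2-cells in $\mathcal{F}_{m}\mathcal{B}$ commutes, and push forward along $J_{m}$. Nothing is missing.
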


\begin{proof}
Since $\mathcal{F}_{s} \mathcal{B}$ is locally discrete and $\Delta : \mathcal{F}_{m} \mathcal{B} \longrightarrow \mathcal{F}_{s} \mathcal{B}$ is locally faithful by Theorem \ref{coth2}, every diagram of 2-cells commutes in $\mathcal{F}_{m} \mathcal{B}$. Trivially, their images under $J_{b}$ commute as well.
\end{proof}

\begin{rmk}
Theorems \ref{coth1} and \ref{coth2} are formulated in terms of free bigroupoids on a \textit{graph}. It is possible to make an analogous (stronger) statement involving free bigroupoids on a \textit{groupoid enriched graph}. This is similar to what is done in \cite{MR1250465} for monoidal categories and in \cite{MR3076451} for bicategories. We chose the former version, since it is sufficient for our purposes. However, the latter version is valid as well and can be proven without too much extra effort. Here is a rough outline of the proof. Using Theorem \ref{coth3}, one can show that every AU-bigroupoid is biequivalent to a 2-groupoid, using a construction similar to Lemma \ref{austr}. Additionally, Lemma \ref{strictify} is also valid for $F : \mathcal{F}_{a} \mathcal{G} \longrightarrow \mathcal{B}$, with $\mathcal{B}$ an AU-bigroupoid, by the same proof. Using this, one can show that (the new) $\Gamma_{2}$ is locally faithful in the same way as was done for $\Gamma_{1}$ in the proof of Theorem \ref{coth1}. The rest of the structure of the proof stays the same. For the individual Lemmas, it will be useful to refer to \cite{MR3076451} as well, as some details involving 2-cells have been lost due to simplifications we could make by working with graphs instead of groupoid enriched graphs.
\end{rmk}

\bibliographystyle{alpha}
\bibliography{Bigroupoidbib}

\end{document}